\documentclass[a4paper]{scrartcl}
\usepackage{amsmath,amsfonts,amssymb,amsopn}
\usepackage{graphicx}
\usepackage{epstopdf}
\usepackage{tensor}
\usepackage{colonequals}
\usepackage{verbatim}
\usepackage{xargs}
\usepackage{mathtools}
\usepackage{pgfplots}
\pgfplotsset{compat=newest}
\usepackage{tikz}
\usepackage{subcaption}
\usepackage{bm}

\ifpdf
  \DeclareGraphicsExtensions{.eps,.pdf,.png,.jpg}
\else
  \DeclareGraphicsExtensions{.eps}
\fi

\newcommand{\logLogSlopeTriangleB}[5]
{

    \pgfplotsextra
    {
        \pgfkeysgetvalue{/pgfplots/xmin}{\xmin}
        \pgfkeysgetvalue{/pgfplots/xmax}{\xmax}
        \pgfkeysgetvalue{/pgfplots/ymin}{\ymin}
        \pgfkeysgetvalue{/pgfplots/ymax}{\ymax}

\pgfmathsetmacro{\xArel}{#1}
        \pgfmathsetmacro{\yArel}{#3}
        \pgfmathsetmacro{\xBrel}{#1-#2}
        \pgfmathsetmacro{\yBrel}{\yArel}
        \pgfmathsetmacro{\xCrel}{\xArel}

        \pgfmathsetmacro{\lnxB}{\xmin*(1-(#1-#2))+\xmax*(#1-#2)} \pgfmathsetmacro{\lnxA}{\xmin*(1-#1)+\xmax*#1} \pgfmathsetmacro{\lnyA}{\ymin*(1-#3)+\ymax*#3} \pgfmathsetmacro{\lnyC}{\lnyA+#4*(\lnxA-\lnxB)}
        \pgfmathsetmacro{\yCrel}{\lnyC-\ymin)/(\ymax-\ymin)}

\coordinate (A) at (rel axis cs:\xArel,\yArel);
        \coordinate (B) at (rel axis cs:\xBrel,\yBrel);
        \coordinate (C) at (rel axis cs:\xCrel,\yCrel);

\draw[#5]   (A)-- node[pos=0.5,anchor=north] {}
                    (B)--
                    (C)-- node[pos=0.5,anchor=west] {\footnotesize{#4}}
                    cycle;
    }
}

\newcommand{\logLogSlopeTriangleT}[5]
{

    \pgfplotsextra
    {
        \pgfkeysgetvalue{/pgfplots/xmin}{\xmin}
        \pgfkeysgetvalue{/pgfplots/xmax}{\xmax}
        \pgfkeysgetvalue{/pgfplots/ymin}{\ymin}
        \pgfkeysgetvalue{/pgfplots/ymax}{\ymax}

\pgfmathsetmacro{\xArel}{#1}
        \pgfmathsetmacro{\yArel}{#3}
        \pgfmathsetmacro{\xBrel}{#1-#2}
        \pgfmathsetmacro{\yBrel}{\yArel}
        \pgfmathsetmacro{\xCrel}{\xArel}

        \pgfmathsetmacro{\lnxB}{\xmin*(1-(#1-#2))+\xmax*(#1-#2)} \pgfmathsetmacro{\lnxA}{\xmin*(1-#1)+\xmax*#1} \pgfmathsetmacro{\lnyA}{\ymin*(1-#3)+\ymax*#3} \pgfmathsetmacro{\lnyC}{\lnyA+#4*(\lnxA-\lnxB)}
        \pgfmathsetmacro{\yCrel}{\lnyC-\ymin)/(\ymax-\ymin)}

        \pgfmathsetmacro{\xDrel}{\xBrel}
        \pgfmathsetmacro{\yDrel}{\yCrel}

\coordinate (A) at (rel axis cs:\xArel,\yArel);
        \coordinate (B) at (rel axis cs:\xBrel,\yBrel);
        \coordinate (C) at (rel axis cs:\xCrel,\yCrel);
        \coordinate (D) at (rel axis cs:\xDrel,\yDrel);

\draw[#5]   (C)-- node[pos=0.5,anchor=south] { }
                    (D)-- node[pos=0.5,anchor=east] {\footnotesize{#4}}
                    (B)--
                    cycle;
    }
}
 \newcommand{\myTitle}{Tangential Errors of Tensor Surface Finite Elements}

\newcommand{\myAbstract}{We discretize a tangential tensor field equation using a surface-finite element approach with a penalization term to ensure almost tangentiality. It is natural to measure the quality of such a discretization intrinsically, i.e., to examine the tangential convergence behavior in contrast to the normal behavior. We show optimal order convergence with respect to the tangential quantities in particular for an isogeometric penalization term that is based only on the geometric information of the discrete surface.}
\newcommand{\myKeywords}{tensor field approximation on surfaces; surface finite elements; a priori error estimates}

\newcommand{\EN}[2]{#2{}} \usepackage[numbers]{natbib}
\usepackage{amsthm}
\usepackage{hyperref}
\usepackage[capitalize,nameinlink]{cleveref}
\usepackage{xcolor}
\hypersetup{
    colorlinks,
    allcolors={green!50!black},
    urlcolor={red!50!black}
}

\theoremstyle{plain}
\newtheorem{theorem}{Theorem}[section]
\newtheorem{corollary}{Corollary}[section]
\newtheorem{lemma}{Lemma}[section]

\theoremstyle{plain}
\newtheorem{remark}{Remark}[section]
\newtheorem{definition}{Definition}[section]

\newcommand{\myBold}[1]{\bm{#1}}
\setlength\tabcolsep{0.7pc}

\def\isimastyle{0}

\def\R{\mathbb{R}}

\DeclareMathOperator{\Id}{Id}

\newcommand{\ul}{\underline}

\newcommand{\G}{\Gamma}
\newcommand{\Gh}{\Gamma_h}

\newcommand{\K}{\mathcal{K}}
\newcommand{\Kh}{\K_h}

\newcommand{\leqC}{\lesssim}

\newcommandx{\TangentBundle}[2][1={}]{\mathrm{T}_{#1}{#2}}
\newcommandx{\DualTangentBundle}[2][1={}]{\mathrm{T}^\star_{#1}{#2}}

\newcommandx{\TensorBundle}[3][1={n},2={}]{\mathrm{T}^{#1}_{#2}{#3}}
\newcommandx{\TensorBundleTan}[2][1={n},2={}]{\TensorBundle[#1][#2]{\Gamma}}
\newcommandx{\TensorBundleAmb}[2][1={n},2={}]{\TensorBundle[#1][#2]{\R^{d+1}}}

\newcommandx{\TensorField}[3][1={n},2={}]{\mathcal{T}^{#1}_{#2}{#3}}
\newcommandx{\TensorFieldTan}[2][1={n},2={}]{\TensorField[#1][#2]{\Gamma}}
\newcommandx{\TensorFieldAmb}[2][1={n},2={}]{\TensorField[#1][#2]{\R^{d+1}}}

\newcommand{\VV}[1]{\uppercase{#1}}

\newcommand{\TT}[1]{\myBold{#1}}

\newcommand{\kg}{k_g}
\newcommand{\kp}{k_p}
\newcommand{\ku}{k_u}

\newcommand{\Weingarten}{\mathcal{W}}

\newcommandx{\CovariantDerivative}[1]{\myBold{\nabla}_{\!#1}}
\newcommand{\CDer}{\CovariantDerivative{\Gamma}}

\newcommand{\ChDer}{\CovariantDerivative{\Gamma_h}}
\newcommand{\DDer}{\mathcal{D}}
\newcommand{\DD}{\DDer}

\newcommandx{\Tensor}[3][2=r,3={}]{{#1}^{#2}_{#3}}
\newcommandx{\TensorVar}[3][2=r,3={}]{{#1}^{(#2)}}
\newcommandx{\TensorSet}[4][3=r,4={}]{\Tensor{#1}[#3][#4]({#2})}

\newcommand{\Inner}[3]{\left({#1} , {#2}\right)_{#3}}
\newcommand{\Abs}[1]{\left\lvert{#1}\right\rvert}
\newcommand{\Norm}[2]{\lVert{#1}\rVert_{#2}}

\newcommand{\Set}[1]{\big\{{#1}\big\}}

\newcommand{\EnergyNorm}[2]{{\lvert\kern-0.25ex\lvert\kern-0.25ex\lvert{#1}\rvert\kern-0.25ex\rvert\kern-0.25ex\rvert}_{#2}}

\newcommand{\GNorm}[1]{\EnergyNorm{#1}{\star}}
\newcommand{\FNorm}[1]{\lVert{#1}\rVert}
\newcommand{\FInner}[2]{\langle{#1} , {#2}\rangle}

\newcommandx{\SobolevSpace}[3][1=s,2=p]{\myBold{W}^{#1,#2}({#3})}
\newcommandx{\SobolevSpaceTan}[3][1=s,2=p]{\myBold{W}_\text{tan}^{#1,#2}({#3})}
\newcommandx{\SobolevSpaceAmb}[3][1=s,2=p]{\myBold{W}^{#1,#2}({#3})}
\newcommandx{\SobolevNormTan}[4][1=s,2=p]{\Norm{#3}{\SobolevSpaceTan[#1][#2]{#4}}}
\newcommandx{\SobolevNormAmb}[4][1=s,2=p]{\Norm{#3}{\SobolevSpaceAmb[#1][#2]{#4}}}
\newcommandx{\SobolevNorm}[4][1=s,2=p]{\Norm{#3}{\SobolevSpace[#1][#2]{#4}}}

\newcommandx{\HSpace}[2][1={1}]{\myBold{H}^{#1}({#2})}
\newcommandx{\HSpaceTan}[2][1={1}]{\myBold{H}_\text{tan}^{#1}({#2})}
\newcommandx{\HSpaceAmb}[2][1={1}]{\myBold{H}^{#1}({#2})}
\newcommandx{\HNormTan}[3][1={1}]{\Norm{#2}{\HSpaceTan[#1]{#3}}}
\newcommandx{\HNormAmb}[3][1={1}]{\Norm{#2}{\HSpaceAmb[#1]{#3}}}
\newcommandx{\HNorm}[3][1={1}]{\Norm{#2}{\HSpace[#1]{#3}}}

\newcommandx{\LSpace}[2][1={2}]{\myBold{L}^{#1}(#2)}
\newcommandx{\LSpaceTan}[2][1={2}]{\myBold{L}_\text{tan}^{#1}(#2)}
\newcommandx{\LSpaceAmb}[2][1={2}]{\myBold{L}^{#1}(#2)}
\newcommandx{\LNormTan}[3][1={2}]{\Norm{#2}{\LSpaceTan[#1]{#3}}}
\newcommandx{\LNormAmb}[3][1={2}]{\Norm{#2}{\LSpaceAmb[#1]{#3}}}
\newcommandx{\LNorm}[3][1={2}]{\Norm{#2}{\LSpace[#1]{#3}}}

\newcommand{\LInfSpace}[1]{\LSpace[\infty]{#1}}
\newcommand{\LInfNorm}[2]{\LNorm[\infty]{#1}{#2}}

\newcommandx{\Vh}[1][1=h]{\myBold{V}_{\!\!{#1}}}

\newcommand{\TensorProductSpace}[2]{\left[{#1}\right]^{#2}}

\newcommand{\nn}{\myBold{n}}
\newcommand{\nnh}{\myBold{n}_h}
\newcommand{\nnhTilde}{\nn_{h,\kp}}

\newcommand{\PP}{\myBold{P}}
\renewcommand{\P}{P}
\newcommand{\PPh}{\PP_{\!h}}
\newcommand{\Ph}{\P_h}
\newcommand{\QQ}{\myBold{Q}}
\newcommand{\Q}{Q}
\newcommand{\QQh}{\QQ_{\!h}}
\newcommand{\Qh}{\Q_h}

\newcommand{\QQhTilde}{\QQ_{h,\kp}}

\newcommandx{\PTensor}[2][1=k,2=l]{\TensorVar{\PP}[#1][#2]}
\newcommandx{\PhTensor}[2][1=k,2=l]{\TensorVar{\PP}[#1][#2]_h}
\newcommandx{\QTensor}[2][1=k,2=l]{\TensorVar{\QQ}[#1][#2]}
\newcommandx{\QhTensor}[2][1=k,2=l]{\TensorVar{\QQ}[#1][#2]_h}

\newcommand{\dPi}{B}

\newcommandx{\RTensor}[3][1={d+1},2={k},3={l}]{\big[\R^{#1}\big]^{{#2}+{#3}}}

\newcommand{\Ih}{\myBold{I}_h}
\newcommand{\I}{\myBold{I}}

\newcommandx{\PDQ}[4][1={\Weingarten},2={\PP},3={n}]{\sum_{r=1}^{k+l}{#1}\bigotimes_{r,l+1}{#2}\big[{#4}\underset{r}{\cdot}{#3}\big]}

\begin{document}

\title{\myTitle}

\author{Hanne Hardering\thanks{Technische Universit{\"a}t Dresden, Institut f{\"u}r Numerische Mathematik, D-01062 Dresden, Germany (\url{hanne.hardering@tu-dresden.de}).}
~~and~~
Simon Praetorius\thanks{Technische Universit{\"a}t Dresden, Institut f{\"u}r Wissenschaftliches Rechnen, D-01062 Dresden, Germany (\url{simon.praetorius@tu-dresden.de}).}}

\maketitle

\paragraph*{Abstract.}
\myAbstract

\paragraph*{Keywords.}
\myKeywords

\section{Introduction}\label{sec:introduction}
We consider the following model problem: Let $\G$ be a closed $d$-dimensional surface embedded in $\R^{d+1}$. Given tensor-valued data $\TT{f}$, the goal is to find tensor fields $\TT{u}$ that solve
\begin{equation}
  -\bm{\Delta}_\G\TT{u} + \TT{u} = \TT{f}\quad\text{ on }\G\,.
\end{equation}
Here, $\bm{\Delta}_\G$ is the connection-Laplacian, the natural extension of the Laplace-Beltrami operator to tensor fields based on covariant derivatives.

We study this problem in a variational setting: Find $\TT{u}\in\HSpace[1]{\G, \TensorBundleTan}$ such that
\begin{equation}\label{eq:continuous_model_problem}
  \Inner{\CDer\TT{u}}{\CDer\TT{v}}{\G} + \Inner{\TT{u}}{\TT{v}}{\G} = \Inner{\TT{f}}{\TT{v}}{\G}\quad\forall \TT{v}\in\HSpace[1]{\G, \TensorBundleTan}\,,
\end{equation}
with $\TensorBundleTan$ the rank $n=k+l$ tensor bundle of $k$-covariant and $l$-contravariant tensors, $\CDer$ the covariant derivative\EN{}{,} and $\TT{f}\in\LSpace[2]{\G, \TensorBundleTan}$ the given data.

This problem is related to the \EN{minimisation}{minimization} of the full covariant $H^1$-norm of tensor fields and has applications in the area of nematic Liquid crystals described by surface Q-tensor fields, e.g., \cite{BDN2010Finite,NV2012Surface,NN2017Orientational,NN2019Finite}, orientation field computation on flexible manifolds and shells, e.g., \cite{NV2021Extrinsic,SSV2016Analysis,NR2019Hydrodynamic}, the time-\EN{discretisation}{discretization} of the vector and tensor-heat equation, e.g., \cite{CW2013Geodesics,GL2014Discrete,SS2019Vector}\EN{}{,} and surface Stokes and Navier-Stokes equations, e.g., \cite{RV2016Incompressible,RV2018Solving,Fr2018Higher,JO2018Incompressible,GA2018Hydrodynamic,JR2019Higher,JO2020Error,LL2020Divergence,BJPRV2021StokesComparison,TH2021Isogeometric}.

\subsection{Description of the numerical \EN{discretisation}{discretization}}\label{sec:description-of-the-numerical-discretization}
Let $\Gh$ be a piecewise polynomial approximation of the continuous surface $\G$, with local polynomial order $\kg$ and triangulation $\mathcal{K}_h$. We introduce the surface Lagrange finite element space $V_h\doteq V_h^{\ku}$ of local Lagrange elements of order $\ku$ on $\Gh$, see \eqref{eq:scalar-lagrange-space}. The product space $\Vh=\TensorProductSpace{V_h}{N}$ with $N=(d+1)^n$ is the base for the \EN{discretisation}{discretization} of tensor fields $\TT{u}_h:\Gh\to\TensorBundleAmb\cong\R^N$ in the ambient space.

These discrete tensor fields are not necessarily tangential to $\Gh$ nor to $\G$. In order to introduce a discrete formulation for \eqref{eq:continuous_model_problem}, we project the non-tangential tensors into the tangent space of the discrete surface. This tensor projection is denoted by $\PPh$ and is built from the discrete surface normal field $\nnh$.

Posing an equation with tangential operators only for tensors in the ambient space results in a non-trivial kernel of the bilinear form. Thus, we introduce a \EN{penalisation}{penalization} of normal contributions in the tensor fields. The normal projection is defined as the orthogonal projection to $\PPh$, i.e., $\QQh=\Id-\PPh$. We use the notation $\QQhTilde$ for a normal projection build from a normal vector $\nnhTilde$ that approximates the exact surface normal $\nn$ with order $\kp$ with typically $\kp\geq\kg$, see \eqref{eq:better-normal}.

Following these ideas, we formulate the discrete variational problem: Find $\TT{u}_h\in\Vh$ such that
\begin{multline}
	\label{eq:discrete_model_problem}
	A_h(\TT{u}_h,\,\TT{v}_h)\colonequals\Inner{\ChDer\PPh\TT{u}_h}{\ChDer\PPh\TT{v}_h}{\Kh} + \Inner{\PPh\TT{u}_h}{\PPh\TT{v}_h}{\Kh} \\
	 + \beta h^{-2\alpha}\Inner{\QQhTilde\TT{u}_h}{\QQhTilde\TT{v}_h}{\Kh} = \Inner{\TT{f}^e}{\PPh\TT{v}_h}{\Kh}\quad\forall\TT{v}_h\in\Vh\,,
\end{multline}
where the inner products and derivatives have to be understood elementwise. Here, $\alpha\in[0,1]$ and $\beta>0$ are parameters of the \EN{penalisation}{penalization} term. The parameters $\kg,\ku,\kp\geq 1$\EN{}{,} and $\alpha$ play a crucial role in the discussion of the errors of this \EN{discretisation}{discretization}. The right-hand side function $\TT{f}^e\in\LSpace[2]{\Gh, \TensorBundleAmb}$ denotes a proper extension of the continuous surface field $\TT{f}$.
The operator $\ChDer|_K=\CovariantDerivative{K}$ denotes the covariant tensor derivative on the smooth elements $K\in\mathcal{K}_h$ of the discrete surface $\Gh$. It can be represented by the projected Euclidean derivative $\DDer$ applied to the tensor fields and a normal correction term, i.e., $\ChDer\PPh\TT{u}_h=\PPh\DDer\TT{u}_h - \PPh\DDer\QQh\TT{u}_h$. This representation leads to an easy to implement numerical scheme.

An extrinsic \EN{discretisation}{discretization} similar to the one described here is introduced by \citet{NN2019Finite} for tensor fields on piecewise flat surfaces and is \EN{analysed}{analyzed} for the vector Laplacian without the zeroth order tangential term by \citet{HL2020Analysis} on \EN{parametrised}{parametrized} surfaces. Both approaches are based on surface finite elements for the \EN{discretisation}{discretization} of the components of these embedded tensors.

\emph{Alternative \EN{discretisation}{discretization} method.\;} The problem is also considered in the context of \EN{discretisation}{discretization} methods with a natural embedding structure, like Trace-FEM or Cut-FEM, for vector-valued PDEs, e.g., \cite{Le2016High,GJ2018Trace,BH2018Cut,JR2019Trace}. In contrast to methods with an extrinsic representations of the tensor fields, some groups consider intrinsic tensor and vector field \EN{parametrisations}{parametrizations} and \EN{discretisations}{discretizations}, e.g., a local Monge \EN{parametrisation}{parametrization}, see, \cite{TS2020Approximation}, a tensor hodge-decomposition in combination with discrete exterior calculus \EN{discretisations}{discretizations}, e.g., \cite{GL2014Discrete,Liu2015Discrete,Hirani2003Discrete}, a direct intrinsic \EN{parametrisation}{parametrization} of the surface, e.g., \cite{BaFaPu2021Intrinsic,BaPu2020Geometrically}, or an extension of spherical harmonics to tangential tensor fields on the sphere, e.g., \cite{Ja1976Tensor,Wi1982Tensor,FS2009Spherical}.
We focus here solely on the \EN{penalisation}{penalization} based surface finite element \EN{discretisation}{discretization}.

While the analysis of the vector field \EN{discretisations}{discretizations} has advanced far already, see, \cite{HL2020Analysis}, tensor field \EN{discretisations}{discretizations} are only experimentally considered, e.g., \cite{NN2019Finite,TS2020Approximation}.
Based on the work of \citet{HL2020Analysis}, we extend their findings towards higher rank tensor fields. Additionally, \citet{HL2020Analysis} observes in the conclusion that ``it is of course also of interest to investigate the convergence \EN{behaviour}{behavior} for the tangential part of the solution. [$\ldots$] Tangential convergence is arguably more natural to consider as we [$\ldots$] seek to approximate a tangential vector field.''. While numerical evidence that there is optimal order convergence for tangential $L^2$-errors is given, the observation is not proven. For non-tangential norms optimal order estimates require a higher order geometry approximation for the \EN{penalisation}{penalization} term. We show that estimates in the tangential $L^2$- and $H^1$-errors can be optimal even if improved geometry approximations are not available. In order to do so, we introduce a more flexible choice of \EN{penalisation}{penalization} term that also allows to \EN{analyse}{analyze} the experimental results of \citet{NN2017Orientational,NN2019Finite} on piecewise flat surfaces.

In summary,
\begin{enumerate}
	\item we are investigating an elliptic reference problem for tensor fields on surfaces and show that the analysis for vector fields can be \EN{generalised}{generalized} in a straightforward way to higher tensorial rank without loosing convergence orders;
	\item we investigate the tangential $L^2$- and $H^1$-norm errors of the discrete solution of the reference problem, in particular when higher order approximations of the surface are not readily available;
	\item we allow for flexibility in the \EN{penalisation}{penalization} term and show that the popular choice of scaling by $\sim h^{-2}$ is not optimal in multiple interesting cases;
	\item we show that even for the piecewise flat surface approximation a converging numerical scheme can be constructed.
\end{enumerate}

\subsection{Main results}\label{sec:main-results}
In this paper we \EN{analyse}{analyze} the error between the solution $\TT{u}$ of the continuous problem \eqref{eq:continuous_model_problem} and the solution $\TT{u}_h\in\Vh$ of the discrete problem \eqref{eq:discrete_model_problem} lifted to the continuous surface, $\TT{u}-\TT{u}^\ell_h$, in various norms.

The first estimate is in the energy norm $\EnergyNorm{\TT{e}_h}{A_h}^2=A_h(\TT{e}_h,\TT{e}_h)$. Assume that we have $\TT{u}\in\HSpaceTan[\ku+1]{\G}$ the solution of the continuous problem, then we get the estimate
\begin{align*}
	\EnergyNorm{\TT{u}^e-\TT{u}_h}{A_h} &\leqC  h^{m}\big(\Norm{\TT{f}}{\G} + \HNormTan[\ku+1]{\TT{u}}{\G}\big)\,,
\end{align*}
with $m=\min\{\ku,\kg+\alpha-1,\kp-\alpha\}$, shown in \cref{thm:energy-norm-estimate}. Note that the geometric part of the error, given by the term $\mathcal{O}(h^{\kg+\alpha-1})$, is of lower order than a corresponding error contribution for scalar surface finite elements, cf. \cite{De2009Higher}. This is due to the additional projections involved for vector and tensor fields in all the terms of the discrete problem \eqref{eq:discrete_model_problem}. For $\alpha\in\{0,1\}$ the last two terms coming from the penalty contribution result in an even lower order convergence, i.e., $\kg-1$ or $\kp-1$. In a former analysis by \citet{HL2020Analysis} of the vector Laplacian this could be compensated by setting $\alpha=1$ and increasing $\kp$. Allowing for a variable $\alpha$, we see that for the linear setting $\kp=\kg=1$ we still get convergence if we choose $1 > \alpha > 0$, with the optimum $\alpha=1/2$.

As one is often only interested in the tangential norms, we also consider the $H^1$-norm of the tangential part of the error. We observe an improvement from the energy norm estimate in many cases. In particular, we prove
\begin{align*}
	\HNormTan{\PP(\TT{u} - \TT{u}_h^{\ell})}{\G}
	&\leqC  h^{\hat m}\left(\Norm{\TT{f}}{\G} + \HNormTan[k_u+1]{\TT{u}}{\G}\right)\,
\end{align*}
with $\hat{m}=\min\{\ku,\kg,\kg+\kp-2\alpha, \kg-1+2\alpha\}$ in \cref{thm:better-h1-estimate}. We observe that this estimate is optimal for $\alpha=1/2$\EN{}{,} and the order will be $\min\{\ku,\kg\}$ in this case. This corresponds to the optimal order that we see in numerical experiments in \cref{sec:numerical-experiments}. Note that this is especially an improvement from the energy norm estimate for the typical choice $\alpha\in \{0,1\}$ in the isogeometric case $\kg=\kp$.

From the estimate in the energy norm and the estimate in the tangential $H^1$-norm we deduce further improvements in the corresponding $L^2$-norms. In particular we prove
\begin{align*}
	\Norm{\left(\Id-\PP\right)(\TT{u} - \TT{u}_h^{\ell})}{\G}
	&\leqC  h^{ m + \alpha}\left(\Norm{\TT{f}}{\G} + \HNormTan[k_u+1]{\TT{u}}{\G}\right)\,,\\
	\Norm{\PP(\TT{u} - \TT{u}_h^{\ell})}{\G}
	&\leqC  h^{\min\{\hat m +1,\tilde{m}\}}\left(\Norm{\TT{f}}{\G} + \HNormTan[k_u+1]{\TT{u}}{\G}\right)\,,
\end{align*}
with $\tilde{m}\colonequals \min\{\kg+1, 2m,m+3-\alpha\}$ in \cref{thm:l2-norm-estimate}. For $\kg\geq 2$, we again see that the optimal order of $\min\{\ku+1,\kg+1\}$ for the tangential part is obtained by choosing $\alpha=1/2$. Only for the case $\kg=1$ and $\kp\geq 2$, the choice $\alpha=1/2$ is sub-optimal as it only provides a convergence order of $1$ that can be improved to $2$ by choosing $\alpha=1$. If one is mainly interested in a better convergence of the normal part of the error and if it is possible to choose $\kp\geq\kg+1$, there will also be an improvement by choosing $\alpha$ greater than $1/2$. In this case $\alpha=1$ will yield the optimal order of convergence of $\min\{\ku+1,\kg+1\}$ for the full $L^2$-norm.

All results are backed up by the numerical experiments in \cref{sec:numerical-experiments}. We observe the claimed optimal order for $\alpha=1/2$. However, the optimal order can be obtained for more cases than in the analytical estimates, in particular for even choices of $\kg$. This is not covered by this paper.

\subsection{Outline of the paper}\label{sec:outline-of-the-paper}
We start this paper by introducing the notation and collecting fundamental results for discrete surfaces and tensor fields in the beginning of \cref{sec:tensor fields-on-surface}. This section is closed by introducing Sobolev spaces for tensor fields on continuous and discrete surfaces. The following \cref{sec:finite-element-discretization-of-tensor-equation} focusses on the variational problem \eqref{eq:continuous_model_problem} and its finite element \EN{discretisation}{discretization} \eqref{eq:discrete_model_problem}. The error analysis of the discrete problem is discussed in \cref{sec:error-analysis}. This is followed by numerical experiments in \cref{sec:numerical-experiments} confirming the found theoretical results.

\section{Tensor fields on surfaces}\label{sec:tensor fields-on-surface}
We study tensor-valued PDEs on curved surfaces. Therefore, we need to introduce notation for tensor fields and clarify the assumptions on the surface properties needed to make the assertion of geometrical and \EN{discretisation}{discretization} errors.
This section also covers some basic estimates for general tensor fields on the continuous and corresponding approximate discrete surface and the relation between these fields and surfaces.

\subsection{Surface and discrete surface approximation}\label{sec:surface-and-discrete-surface-approximation}
First, we introduce the notion of hypersurfaces and their approximation by piecewise polynomial discrete surfaces. This section covers definitions and general geometric estimates.

\subsubsection{The smooth surface}\label{sec:the-smooth-surface}
Let $\G$ be a compact, oriented, closed\EN{}{,} and $C^{\infty}$, d-dimensional hypersurface, isometrically embedded in $\R^{d+1}$.
We restrict the analysis to dimensions $1\leq d\leq 3$.
Let $\TangentBundle{\G}$ denote the tangential vector bundle of $\G$. We introduce for each point $x\in \G$ the ambient tangent space $\TangentBundle[x]{\R^{d+1}}\cong\R^{d+1}$ as the space of $\R^{d+1}$ vectors with origin in $x$. This space locally splits into an orthogonal direct sum $\R^{d+1}=\TangentBundle[x]{\G} \oplus N_{x}\G$, where $N_{x}\G$ is the normal space at $x$. We denote the normal bundle by $N\G$.
As we consider hypersurfaces only\EN{}{,} and $\G$ is oriented, we can pick out the unique outward pointing normal $\nn_\G(x)$ at each point $x\in\G$. This normal vector field induces projection operators $\P_\G(x) = \Id - \nn_\G(x)\otimes \nn_\G(x)$ into the tangent space $\TangentBundle[x]{\G}$ and $\Q_\G(x) = \nn_\G(x)\otimes \nn_\G(x)$ into the normal space $N_{x}\G$.

Let $U_{\delta}\subset \R^{d+1}$ be a $\delta$-\EN{neighbourhood}{neighborhood} of $\G$ with $\delta>0$ sufficiently small such that the closest-point projection $\pi:U_{\delta}\to\G$ from this \EN{neighbourhood}{neighborhood} to $\G$ is unique. The projection mapping is defined by
\begin{equation}\label{eq:closest-point-projection}
  \pi(x) \colonequals x - \rho(x)\nn(x)
\end{equation}
where $\rho$ is the signed distance function with $\rho(x)<0$ for $x$ in the interior of $\G$.
With this distance function the surface normal vector field is naturally extended to $U_{\delta}$ by $\nn\colonequals\DD\rho$ with $\DD$ the (componentwise) Euclidean derivative, see \cref{def:euclidean-derivative}, and $\nn_\G = \nn|_\G$. As $\P_\G$ is defined in terms of $\nn_\G$, we introduce the extended projections $\P$ and $\Q$ defined in terms of $\nn$.
The extended normal vector also allows to define the extended Weingarten map on the surface and in $U_\delta$, by $\Weingarten \colonequals -\DDer\nn$.

\subsubsection{Approximation of the surface}\label{sec:approximation-of-the-surface}
Let $\hat{\G}_{h}\subset U_{\delta}$ be a shape-regular, quasi-uniform triangulated surface with element diameter $h$ and vertices on $\G$. We denote by $\hat{\mathcal{K}}_{h}$ the set of triangular faces of $\hat{\G}_{h}$.
On this piecewise flat triangulated surface the usual $m$-th order Lagrange finite element spaces can be defined by
\begin{equation}\label{eq:scalar-flat-lagrange-space}
  \hat{V}_h^m \doteq \hat{V}_h^m(\hat{\G}_{h},\mathbb{R}) \coloneqq \Set{v\in C^0(\hat{\G}_{h}) \mid v|_{\hat{K}}\in\mathbb{P}^{\hat{K}}_m,\;\forall \hat{K}\in\hat{\mathcal{K}}_h}\,,
\end{equation}
where $\mathbb{P}^{\hat{K}}_m$ denotes the space of polynomials of degree at most $m$ on $\hat{K}$. Note that $\mathbb{P}^{\hat{K}}_m$ is defined as the mapping of polynomials $\mathbb{P}_m$ from a flat $d$-dimensional reference element $K_{\text{ref}}\subset\R^d$ to the flat surface element $\hat{K}\subset\R^{d+1}$. For vector-valued maps we use the notation
\begin{equation}\label{eq:vector-flat-lagrange-space}
  \hat{V}_h^m(\hat{\G}_{h},\mathbb{R}^N) \coloneqq \left[\hat{V}_h^m\right]^N\,,
\end{equation}
that has to be understood as a componentwise definition.

We consider the restriction of the closest-point projection $\pi$ to $\hat{\G}_{h}$ as a mapping $\pi:\hat{\G}_{h}\to\mathbb{R}^{d+1}$. Its componentwise Lagrange interpolation $\hat{I}_h^k\pi$ into $\hat{V}_h^k(\hat{\G}_{h},\mathbb{R}^{d+1})$ is denoted by $\pi_{h,k}$. Mapping the piecewise flat surface results in a higher-order approximation $\G_{h,k} \colonequals \pi_{h,k}(\hat{\G}_{h})$ of the surface $\G$ that is contained in $U_\delta$ if $h$ is small enough. In the following we assume that $h$ is chosen correspondingly. Note that $\G_{h,1} = \hat{\G}_{h}$.
Associated to the discrete surface $\G_{h,k}$ is a set of surface elements
\[
	\K_{h,k}\colonequals \Set{\pi_{h,k}(\hat{K}) \mid \hat{K}\in\hat{\K}_{h}}\,,
\]
such that $\G_{h,k}=\bigcup_{K\in\K_{h,k}}\bar{K}$.

In the lines of the construction of the \EN{parametrised}{parametrized} surface $\G_{h,k}$ by mapping the reference surface $\hat{\G}_h$ by $\pi_{h,k}$, we can also obtain the continuous surface $\G$ by lifting the discrete surface as $\G=\pi(\G_{h,k})$. This induces a partitioning of $\G$ in surface elements
\[
	\K \colonequals \Set{\pi(K) \mid K\in\K_{h,k}}\,,
\]
such that $\G=\bigcup_{K\in\K}\bar{K}$.

For details about the construction and its properties, see, e.g., \cite{De2009Higher,DE2013Finite}.

\subsubsection{Properties of the discrete surface}\label{sec:properties-of-the-discrete-surface}

For the \EN{parametrised}{parametrized} surface $\G_{h,\kg}$ with polynomial order $\kg \geq 1$ and discrete surface normal vector $\nn_{\Gh}$ it is shown  that the following geometric estimates hold \cite[Proposition 2.3]{De2009Higher}:
\begin{align}
  \LInfNorm{\pi - \pi_{h,\kg}}{\G_{h,\kg}}         &\leqC h^{\kg+1}, \\
  \LInfNorm{\nn - \nn_{\Gh}}{\G_{h,\kg}}           &\leqC h^{\kg}, 		\label{eq:n-nh} \\
  \LInfNorm{\DDer\nn - \DDer\nn^e_{\Gh}}{K}				 &\leqC h^{\kg-1},	\label{eq:w-wh}
\end{align}
with $a\leqC b$ means $a\leq C\; b$ with $C$ a constant independent of $h$. $K\in\K_{h,\kg}$ denotes an element of the triangulation of $\G_{h,\kg}$ and $\nn_{\Gh}^e = \nn_{\Gh}\circ(\pi\vert_{\G_{h,\kg}})^{-1}\circ\pi$ an extension of $\nn_{\Gh}$ into the \EN{neighbourhood}{neighborhood} of $K$.

The estimate \eqref{eq:w-wh} can even be \EN{generalised}{generalized} to $\LInfNorm{\DDer^l\nn - \DDer^l\nn^e_{\Gh}}{K}	\leqC h^{\kg-l}$ for all $0\leq l\leq \kg$ and can be proven analogously to the case $l=1$ \cite[Proposition 2.3]{De2009Higher} using interpolation error estimates for $\pi_{h,\kg}$. As we need only the estimate for $l=0,1$, we refrain from stating this more explicitly although \EN{generalisations}{generalizations} of several lemmata in \cref{sec:interpolation-errors} are possible. In the  following we denote by $\Gh\doteq\G_{h,\kg}$ the \EN{parametrised}{parametrized} surface approximation of order $\kg$ and denote all other quantities on this discrete surface with the subscript $h$ dropping the explicit notation of $\kg$.

Due to the construction of $\K$, each element $K_\G\in\K$ has a one-to-one association to a discrete element $K\in\Kh$ in the triangulation of $\Gh$. The corresponding coordinate association is defined in terms of the closest-point projection $\pi$. As in the definition of the continuous normal field, we lift $\nn_{\Gh}$ from $\Gh$ to $\G$ and extend to the \EN{neighbourhood}{neighborhood} $U_\delta$ by $\nnh\colonequals \nn_{\Gh}^e$. Note that $\nnh\vert_{\Gh}=\nn_{\Gh}$. This extension also defines discrete projections $\Ph=\Id-\nnh\otimes\nnh$ and $\Qh=\Id-\Ph$.

\subsubsection{Derivative of the closest-point projection}\label{sec:derivative-of-the-closest-point-projection}

At every $\tilde{x}\in\Gh$ inside an element, the differential of the mapping $\pi:\Gh\to\G$ is a bijective function, $d\pi_{\tilde{x}}:T_{\tilde{x}}\Gh\to T_{\pi(\tilde{x})}\G$. We denote by $x=\pi(\tilde{x})$ the associated point on $\G$. In extrinsic coordinates, $d\pi_{\tilde{x}}$ is given by
\begin{align*}
	\dPi(\tilde{x})\colonequals\DDer_{\Gh}\pi(\tilde{x}) = \P(x)(\Id +\rho(\tilde{x})\Weingarten(x))\Ph(\tilde{x}),
\end{align*}
and the inverse of $\dPi$ is given explicitly by
\begin{align*}
	\dPi^{-1}(x)=\Ph(\tilde{x})\left(\Id - \frac{\nn(x)\otimes \nnh(\tilde{x})}{\langle \nn(x), \nnh(\tilde{x})\rangle} \right)(\Id +\rho(\tilde{x})\Weingarten(x))^{-1}\P(x).
\end{align*}
It is easy to check that $\dPi(\tilde{x})\dPi^{-1}(x)=\P(x)$ and $\dPi^{-1}(x)\dPi(\tilde{x})=\Ph(\tilde{x})$ for $\tilde{x}\in\Gh$ and $x=\pi(\tilde{x})$.

The following Lemma \EN{summarises}{summarizes} geometric estimates for the mapping $\dPi$ and its inverse in relation to the projection operators $\P$ and $\Ph$. These estimates allow the comparison of covariant derivatives on the different surfaces.

\begin{lemma}[Estimates involving $B$, $B^{-1}$ and $\det{B}$]\label{lem:B}
	Let $\G$ and $\Gh$ be as above. Then there exist constants $C$ independent of the grid-size $h$ such that we have for $B$ and $B^{-1}$ the estimates
\begin{align*}
		\LInfNorm{\dPi}{\Gh}          &\leq C\,, &
		\LInfNorm{\dPi^{-1}}{\G}         &\leq C\,, \\
		\LInfNorm{\P\Ph - \dPi}{\Gh}  &\leqC h^{\kg+1}\,, &
		\LInfNorm{\Ph\P - \dPi^{-1}}{\G} &\leqC h^{\kg+1}\,,
\intertext{and for the surface measures $d\G=\Abs{\det(B)}d\Gh$ the estimates}
\LInfNorm{1-\Abs{\det(\dPi)}}{\Gh}    &\leqC h^{\kg+1}\,, \\
		\LInfNorm{\Abs{\det(\dPi)}}{\Gh}      &\leq C\,, &
		\LInfNorm{\Abs{\det(\dPi)}^{-1}}{\Gh} &\leq C\,.
\intertext{Furthermore, we have}
\LInfNorm{\dPi\dPi^{T} - \P}{\G}            &\leqC h^{\kg+1}\,, &
		\LInfNorm{\dPi^{-T}\dPi^{-1} - \Ph}{\Gh} &\leqC h^{\kg+1}\,.
	\end{align*}
\end{lemma}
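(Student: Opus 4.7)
The plan is to derive every estimate directly from the two explicit formulas for $B=\dPi$ and $\dPi^{-1}$ given just before the lemma, combining them with three basic geometric facts that are already established in the excerpt: $|\rho(\tilde x)|\lesssim h^{\kg+1}$ for $\tilde x\in\Gh$ (distance to $\G$), $\|\nn-\nnh\|_{L^\infty}\lesssim h^{\kg}$ (equation \eqref{eq:n-nh}), and the uniform boundedness of $\Weingarten=-\DDer\nn$. I will also repeatedly use the pointwise identities $\PP\nn=0$, $\PP\Weingarten=\Weingarten=\Weingarten\PP$ (because $\Weingarten\nn=0$ from differentiating $|\nn|^2=1$), so that $\PP(\Id+\rho\Weingarten)\PP = \PP+\rho\Weingarten$.

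First I would dispatch the boundedness statements. For $\dPi=\PP(\Id+\rho\Weingarten)\PPh$ the bound is immediate from $\|\PP\|,\|\PPh\|\le 1$ and $|\rho|\|\Weingarten\|\lesssim h^{\kg+1}$. For $\dPi^{-1}$ the only delicate factor is $\langle\nn,\nnh\rangle^{-1}$, which is controlled by $\langle\nn,\nnh\rangle = 1-\tfrac12|\nn-\nnh|^2 \ge \tfrac12$ once $h$ is small enough; the factor $(\Id+\rho\Weingarten)^{-1}$ is then handled by a Neumann series since $|\rho|\|\Weingarten\|\to 0$. The estimates $\|\PP\PPh-\dPi\|\lesssim h^{\kg+1}$ follow because $\PP\PPh-\dPi = -\rho\,\PP\Weingarten\PPh$, and the analogous expression for $\PPh\PP-\dPi^{-1}$ is obtained by rewriting $\Id-\tfrac{\nn\otimes\nnh}{\langle\nn,\nnh\rangle}$ as $\PPh\PP$ plus an $O(h^{\kg+1})$ remainder: the difference $\Id-\tfrac{\nn\otimes\nnh}{\langle\nn,\nnh\rangle}-\PPh\PP$ can be expanded using $\PPh=\Id-\nnh\otimes\nnh$, $\PP=\Id-\nn\otimes\nn$, and the fact that $\nn-\nnh$ appears quadratically whenever the $\langle\nn,\nnh\rangle^{-1}$ factor is expanded about $1$. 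Multiplying by the already bounded remaining factors preserves the order $h^{\kg+1}$.

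For the last block, I compute $\dPi\dPi^T=\PP(\Id+\rho\Weingarten)\PPh(\Id+\rho\Weingarten)\PP$. Writing $\PPh=\Id-\nnh\otimes\nnh$ and using $\PP\Weingarten\PP=\Weingarten$ collapses the ``$\Id$-part'' to $\PP+2\rho\Weingarten+\rho^2\Weingarten^2$, while the rank-one subtraction becomes $[\PP(\Id+\rho\Weingarten)\nnh]\otimes[\PP(\Id+\rho\Weingarten)\nnh]$; since $\PP\nnh=\PP(\nnh-\nn)$ and $\Weingarten\nnh=\Weingarten(\nnh-\nn)$, this outer product has norm $\lesssim h^{2\kg}\le h^{\kg+1}$. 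Combining, $\|\dPi\dPi^T-\PP\|\lesssim h^{\kg+1}$. The estimate $\|\dPi^{-T}\dPi^{-1}-\PPh\|\lesssim h^{\kg+1}$ is proved by the same kind of expansion applied to the explicit $\dPi^{-1}$ (after the symplification of the previous paragraph). Finally, for the surface measure estimates, I use $|\det\dPi|^2=\det(\dPi\dPi^T|_{\TangentBundle\G})$, noting that $\PP$ restricted to $\TangentBundle\G$ is the identity (so its determinant is $1$); perturbing by an $O(h^{\kg+1})$ tensor yields $|\det\dPi|=1+O(h^{\kg+1})$ and in particular uniform upper and lower bounds. The main (minor) obstacle is the bookkeeping of the rank-one correction in $\dPi^{-1}$, where care is needed to turn $h^{\kg}$ bounds for $\nn-\nnh$ into $h^{\kg+1}$ bounds by exploiting the quadratic way it enters through $\langle\nn,\nnh\rangle$; everything else is an algebraic consequence of the two explicit formulas.
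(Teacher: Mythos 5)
Your proposal takes a genuinely different route from the paper, because the paper does not prove this lemma at all: its ``proof'' is a citation of \cite{HL2020Analysis} and \cite{DE2013Finite}. Your direct computation from the two explicit formulas for $\dPi$ and $\dPi^{-1}$ --- using only $\Abs{\rho}\leqC h^{\kg+1}$ on $\Gh$, the normal estimate \eqref{eq:n-nh}, boundedness of $\Weingarten$, and the identities $\Weingarten\nn=0$, $\P\Weingarten\P=\Weingarten$ --- is exactly the kind of argument carried out in those references, and it buys a self-contained proof that makes visible where each rate comes from: the $h^{\kg+1}$ terms come from $\rho$, while $\nn-\nnh$ only ever enters quadratically (through $\langle\nn,\nnh\rangle$, through $\Ph\nn=\Ph(\nn-\nnh)$ paired with $\P\nnh=\P(\nnh-\nn)$, and through the rank-one outer products), so that $h^{2\kg}\le h^{\kg+1}$ suffices. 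The determinant estimates via $\Abs{\det\dPi}^2=\det\big(\dPi\dPi^T|_{\TangentBundle{\G}}\big)$ are also fine.

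One step does not close as written. With the paper's formulas, $\dPi^{-1}=\Ph(\cdots)\P$, so $\dPi^{-T}\dPi^{-1}=\P(\cdots)\P$ annihilates $\nn$, whereas $\Ph\nn=\Ph(\nn-\nnh)$ has norm comparable to $\Abs{\nn-\nnh}$, which is generically of order $h^{\kg}$ only; your own expansion $\dPi^{-1}=\Ph\P+O(h^{\kg+1})$ gives $\dPi^{-T}\dPi^{-1}=\P\Ph\P+O(h^{\kg+1})=\P+O(h^{\kg+1})$, which is only $h^{\kg}$-close to $\Ph$. The product that is $h^{\kg+1}$-close to $\Ph$ is $\dPi^{-1}\dPi^{-T}=\Ph\P\Ph+O(h^{\kg+1})=\Ph+O(h^{\kg+1})$, the true counterpart of $\dPi\dPi^{T}-\P$. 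So ``the same kind of expansion'' does work, but only for that ordering of the factors; you should state this explicitly rather than assert the claim follows identically, since for the ordering literally written in the lemma the estimate loses one order.
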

\begin{proof}
	See \cite[Lemma 4.1]{HL2020Analysis} and \cite[Lemma 4.4]{DE2013Finite}.
\end{proof}

\begin{remark}
	For $\kg=1$ the discrete surface $\Gh$ coincides with the reference surface $\hat{\G}_h$. We denote by $\hat{\dPi}$ the corresponding derivative of $\pi$. The closest-point projection $\pi$ on $\hat{\G}_h$ is bounded, i.e.,
\begin{equation}\label{eq:boundedness-B-hat}
		\Norm{\pi}{C^\infty(\hat{K})} \leq C,\quad \Norm{\pi\vert_{\hat{\G}_h}^{-1}}{C^\infty(K)} \leq C
	\end{equation}
	for $\hat{K}\in\hat{\K}_h$ and $K\in\K$. The first boundedness follows from the smoothness of $\pi$ and the second estimate by boundedness of the first derivative $\hat{\dPi}^{-1}$ of $\pi$ and chain rule.
\end{remark}

\begin{lemma}[Boundedness of the discrete projection derivatives]\label{lem:pi_h-pi_hinv}
	Let the derivative of the discrete mapping $\pi_h$ on $\hat{\G}_h$ be denoted by
	\[
		\hat{B}_h\colonequals \DDer_{\hat{\G}_h} \pi_h = \DDer_{\hat{\G}_h} I_h^{\kg} \pi\vert_{\hat{\G}_h}\,.
	\]
	Then, boundedness of $\hat{B}_h$ and its inverse $\hat{B}^{-1}_h$ w.r.t. the corresponding $C^0$ norm holds, i.e., $\Norm{\hat{B}^{-1}_h}{C^0(K_h)} \leq C$,
	for $K_h\in\Kh$. As a consequence we have boundedness of the discrete projections:
	\[
		\Norm{\pi_h}{C^\infty(\hat{K})} \leq C,\quad \Norm{\pi_h^{-1}}{C^\infty(K_h)} \leq C
	\]
	for $\hat{K}\in\hat{\K}_h$ and $K_h\in\Kh$ with generic constants $C$ independent of $h$.
\end{lemma}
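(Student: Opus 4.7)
The plan is to view $\hat{\dPi}_h$ as a small perturbation of the continuous derivative $\hat{\dPi}\colonequals \DDer_{\hat{\G}_h}\pi\vert_{\hat{\G}_h}$. By \eqref{eq:boundedness-B-hat}, $\pi$ is $C^\infty$ with uniformly bounded derivatives on every flat element $\hat{K}\in\hat{\K}_h$, so $\hat{\dPi}$ and its inverse are bounded in $C^0$ independently of $h$. This serves as the unperturbed reference, and the remaining work is to transfer these bounds to the interpolant $\pi_h$.

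First, I would invoke componentwise Lagrange interpolation error estimates on each $\hat{K}$ to obtain
\[
\LInfNorm{\pi - \pi_h}{\hat{K}}\leqC h^{\kg+1}\,,\qquad \LInfNorm{\hat{\dPi} - \hat{\dPi}_h}{\hat{K}}\leqC h^{\kg}\,,
\]
so that the triangle inequality yields $\Norm{\hat{\dPi}_h}{C^0(\hat{K})}\leq C$ uniformly in $h$, giving the first half of the stated boundedness. For the inverse, I would use a Neumann-series perturbation argument: in extrinsic coordinates both maps are rank-$d$ linear maps on $\R^{d+1}$, and factoring
\[
\hat{\dPi}_h = \hat{\dPi}\bigl(\Id + \hat{\dPi}^{-1}(\hat{\dPi}_h - \hat{\dPi})\bigr)
\]
reduces the problem to inverting a small perturbation of the identity, which is possible for $h$ sufficiently small and yields $\Norm{\hat{\dPi}_h^{-1}}{C^0(K_h)}\leq C$. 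For the $C^\infty$ statements I would exploit that $\pi_h\vert_{\hat{K}}$ is a polynomial of degree $\kg$, so all derivatives of order beyond $\kg$ vanish identically, while for orders $j\leq\kg$ the $W^{j,\infty}$-stability of the Lagrange interpolant combined with smoothness of $\pi$ gives $h$-independent bounds. The $C^\infty$ bound on $\pi_h^{-1}$ then follows by inductively applying the chain rule starting from $\DDer\pi_h^{-1} = \hat{\dPi}_h^{-1}\circ\pi_h^{-1}$, since higher derivatives of $\hat{\dPi}_h^{-1}$ are polynomial expressions in $\hat{\dPi}_h^{-1}$ and in derivatives of the polynomial $\hat{\dPi}_h$.

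I expect the main obstacle to lie in the perturbation step, because $\hat{\dPi}$ and $\hat{\dPi}_h$ have codomains in slightly different tangent planes ($T\G$ versus $T\Gh$), so the composition $\hat{\dPi}^{-1}(\hat{\dPi}_h - \hat{\dPi})$ is not literally an endomorphism of $T\hat{\G}_h$. The estimate \eqref{eq:n-nh} controls the discrepancy between the two normal directions at order $h^{\kg}$, which can be absorbed into the perturbation term without changing its smallness; once this bookkeeping is done, the Neumann argument goes through and everything else reduces to standard interpolation theory on the polynomial reference mesh.
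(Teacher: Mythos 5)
Your proposal is correct and follows essentially the same route as the paper: boundedness of $\hat{B}_h$ via interpolation error estimates, and boundedness of $\hat{B}_h^{-1}$ via a perturbation argument off $\hat{B}^{-1}$ in which the $O(h^{\kg})$ term $\hat{B}^{-1}(\hat{B}-\hat{B}_h)\hat{B}_h^{-1}$ is absorbed for $h$ small (your Neumann series is the same mechanism), with higher derivatives handled by the chain rule and the polynomial structure of $\pi_h$. The tangent-plane bookkeeping you flag is exactly what the paper's extra projection term $\hat{B}^{-1}\Ph$ accounts for.
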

\begin{proof}
	The boundedness of $\pi_h$ follows directly from continuity of the interpolation operator $I_h^{\kg}$ and smoothness of $\pi$.
For $\hat{B}^{-1}_h$ we write
	\begin{align*}
		\hat{B}^{-1}_h(\pi_h(\hat{x}))
		& = \hat{B}^{-1}(\pi(\hat{x}))\left(\hat{B}(\hat{x}) - \hat{B}_h(\hat{x})\right)\hat{B}^{-1}_h(\pi_h(\hat{x})) + \hat{B}^{-1}(\pi(\hat{x}))\Ph(\pi_h(\hat{x})),\quad\text{for }\hat{x}\in\Gh
	\end{align*}
	and estimate using interpolation-error estimates
	\begin{align*}
		\Norm{\hat{B}^{-1}_h}{C^0(K_h)}
			&\leq \Norm{\hat{B}^{-1}}{C^0(K)}\Norm{\hat{B}_h - \hat{B}}{C^0(\hat{K})}\Norm{\hat{B}_h^{-1}}{C^0(K_h)} + \Norm{\hat{B}^{-1}}{C^0(K)}\\
			&\leq C\;h^{\kg}\Norm{\hat{B}_h^{-1}}{C^0(K_h)} + \Norm{\hat{B}^{-1}}{C^0(K)}\,.
	\end{align*}
	For $h$ small enough, we can compensate the first term into the left-hand side and by \eqref{eq:boundedness-B-hat} get the desired boundedness. Higher order estimates then follow from the chain rule.
\end{proof}

\subsection{Tensor fields}\label{sec:Tensor-fields}
While intrinsic tensor fields are described in terms of the surface intrinsic metric and a basis of the tangential space, we aim for an extrinsic description in terms of the basis of the $\R^{d+1}$. This allows for a global simple \EN{parametrisation}{parametrization} and representation of continuous fields and gives rise to a corresponding representation of non-intrinsic fields with normal components.

\subsubsection{Tensors and tensor fields}\label{sec:tensors-and-tensor fields}

We denote by $\mathbf{e}_1,\ldots,\mathbf{e}_{d+1}\in\R^{d+1}$ the canonical basis of $\R^{d+1}$. Their projections into $\TangentBundle{\G}$ introduces a local spanning set $\mathbf{p}_1(x)\colonequals\P(x)\mathbf{e}_1,\ldots,\mathbf{p}_{d+1}(x)\colonequals\P(x)\mathbf{e}_{d+1}$ for $x\in\G$. For each tangent vector field $V\in\TangentBundle{\G}$ we denote the representation in extrinsic coordinates $V(x) = V^i(x) \mathbf{e}_i = V^i(x) \mathbf{p}_i(x)$ by $\underline{V}(x)=[V^i(x)]_i$ with coordinate functions $V^i : \G\to\R$. This extrinsic representation induces a canonical (Frobenius) inner product $\langle V,\,W\rangle \colonequals\sum_i V^i\, W^i$.

Similarly, elements $\TT{f}$ of the tangential tensor bundle $\TensorBundleTan$ of rank-$n$ contravariant tensors have a representation in extrinsic coordinates by
\begin{align*}
	\TT{f}(x) &= f^{i_1,\ldots,i_n}(x)\, \mathbf{e}_{i_1}\otimes\cdots\otimes \mathbf{e}_{i_n}\\
					  &= f^{i_1,\ldots,i_n}(x)\, \mathbf{p}_{i_1}(x)\otimes\cdots\otimes \mathbf{p}_{i_n}(x)\,,
\end{align*}
with $\underline{\TT{f}}(x) = [f^{i_1,\ldots,i_n}(x)]_{i_1,\ldots,i_n}$. This representation induces the canonical inner product $\langle V_{1}\otimes V_{2},W_{1}\otimes W_{2}\rangle\colonequals \langle V_{1},W_{1}\rangle \langle V_{2},W_{2}\rangle$.
The tangent bundle $\TangentBundle{\G}\equiv\TensorBundleTan[1]$ is represented by the rank-1 tensor bundle.
The space of tensors with smooth coefficient functions $f^{i_1,\ldots,i_n}\in C^\infty(\G,\R)$ defines the space of smooth tensor fields $\TT{f}\in\TensorFieldTan[n]$.

For each $x\in\G$ we denote by $\TensorBundleAmb[n][x]$ the non-tangential tensors that can be seen as product of Euclidean spaces, i.e., $\TensorBundleAmb[n][x]\cong \R^{(d+1)^n}$ and the corresponding tensor bundle is denoted by $\TensorBundleAmb[n]$. With $\TangentBundle{\R^{d+1}}\equiv\TensorBundleAmb[1]$ the ambient tangent space is given.

\begin{remark}
	We do not distinguish between covariant and contravariant tensors here in this manuscript. All the tensor fields are represented w.r.t. an Euclidean basis. Thus, raising and lowering of indices is an index contraction with the Kronecker delta tensor. For simplicity of notation we mostly work with contravariant index position here and just write the full tensor rank $n$ instead of separation by covariant and contravariant type.
\end{remark}

\begin{remark}
	In most parts of this manuscript we implicitly assume $n > 0$ and thus exclude scalar fields. Those are special cases \EN{analysed}{analyzed} elsewhere, e.g., \cite{De2009Higher}.
\end{remark}

\begin{definition}[Tensor projection]\label{def:tensor-projection}
	The tangential projection $\P=\Id - \nn\otimes\nn$ is extended to a projection of (non-tangential) tensor fields $\TT{f} = f^{i_1,\ldots,i_n} \mathbf{e}_{i_1}\otimes\ldots\otimes\mathbf{e}_{i_n}\in\TensorFieldAmb[n]$ by
\begin{align*}
		\PP\TT{f} \colonequals&\, f^{i_1,\ldots,i_n} \P\mathbf{e}_{i_1}\otimes\ldots\otimes\P\mathbf{e}_{i_n}
							=\, f^{i_1,\ldots,i_n} \mathbf{p}_{i_1}\otimes\ldots\otimes\mathbf{p}_{i_n}\,.
	\end{align*}
\end{definition}

	The tensor projection is defined as an operator projecting all components of the tensor using the tangential projection. The projected tensor is a tangential tensor. For vector fields $V$ the tensor projection corresponds to the usual tangential projection, $\PP V=\P V$, for 2-tensor fields $\TT{f}$ it corresponds to a row and column projection, $\PP\TT{f}=\P\TT{f}\P$.

\begin{definition}[Discrete tensor projection]
	We denote by $\PPh$ the tensor projection into the tangent space of $\Gh$, defined in terms of $\Ph=\Id-\nnh\otimes\nnh$ elementwise similar to \cref{def:tensor-projection}. Those elementwise projections are formally glued together to define a projection operator for tensor fields on $\Gh$.
\end{definition}
\begin{remark}
	Note, on the facets in the intersection of two elements, there is no tangent space defined and thus also no tangential projection. In the finite element method the (bi-)linear forms are defined elementwise and quadrature typically is performed in the inside of the elements. Thus, this definition is no restriction for the practical implementation. However, in the analysis one has to be careful, e.g., when considering interpolation of projected fields. Often continuity is required for interpolation operators.
\end{remark}

In order to compare tensor fields on $\G$ and $\Gh$ we choose particular extensions and lifts. Those are defined in terms of the closest-point projection $\pi:U_\delta\to\G$.

\begin{definition}[Extension and lifting of tensor fields]\label{def:extension}
	The extension of tensor fields $\TT{f}:\G\to\TensorBundleAmb$ from $\G$ to $U_\delta$ is defined by
\begin{equation*}
		\TT{f}^{e} \colonequals \TT{f}\circ\pi:U_\delta\to\TensorBundleAmb\,.
	\end{equation*}
Analogously, we define the lifting of tensor fields $\TT{f}_{h}:\Gh\to\TensorBundleAmb$ from $\Gh$ to $\G$ as the inverse of the extension restricted to $\Gh$,
\begin{equation*}
		\TT{f}_{h}^{\ell} \colonequals \TT{f}_{h}\circ \pi|_{\Gh}^{-1} :\G\to \TensorBundleAmb\,.
	\end{equation*}
	To extend a tensor field $\TT{f}_{h}:\Gh\to\TensorBundleAmb$ from $\Gh$ to $U_\delta$, we first lift it to $\G$ and then extend, i.e.,
	\begin{equation*}
		\TT{f}_{h}^{e} \colonequals (\TT{f}_{h}^{\ell})^{e} :U_{\delta}\to \TensorBundleAmb\,.
	\end{equation*}
\end{definition}

\subsubsection{Derivative of tensor fields}\label{sec:derivative-of-tensor fields}

We introduce derivatives of surface tensor fields, first the Euclidean derivative of extended tensor fields, second the covariant derivative. Those derivative definitions are just collections of standard definitions to introduce notation, see, e.g., \cite{DE2013Finite,HL2020Analysis}.

\begin{definition}[Euclidean derivative]\label{def:euclidean-derivative}
  Let the tensor field $\TT{f}:\G\to\TensorBundleAmb[n]$ be given in the basis notation $\TT{f} = f^{i_1,\ldots,i_n} \mathbf{e}_{i_1}\otimes\ldots\otimes\mathbf{e}_{i_n}$. We denote by $\bar{\TT{f}}$ a smooth extension of $\TT{f}$ in the \EN{neighbourhood}{neighborhood} of the surface $\G$ with $\bar{\TT{f}}|_\G=\TT{f}$. The Euclidean derivative of this tensor field is defined by
	\[
		\DDer\bar{\TT{f}} \colonequals \partial_{\mathbf{e}_j}\bar{f}^{i_1,\ldots,i_n}\mathbf{e}_{i_1}\otimes\ldots\otimes\mathbf{e}_{i_n}\otimes\mathbf{e}_{j}\,.
	\]
  The derivative $\DDer\bar{\TT{f}}$ is a rank-$(n+1)$ tensor field on $\G$.
\end{definition}

The Euclidean derivative of surface tensor fields $\TT{f}$ depends on the extension $\bar{\TT{f}}$ in the embedding space. In order to fix this, we consider the extension introduced before in \cref{def:extension}. For this an equivalent definition can be formulated in terms of tangential surface derivatives. This can be formulated intrinsically and thus does not need the notation of an extension explicitly.

\begin{definition}[Surface derivative]\label{def:surface-derivative}
  Let the tensor field $\TT{f}:\G\to\TensorBundleAmb[n]$ be given in the basis notation $\TT{f} = f^{i_1,\ldots,i_n} \mathbf{e}_{i_1}\otimes\ldots\otimes\mathbf{e}_{i_n}$ and let $\mathbf{p}_j=\P\mathbf{e}_j$. The tangential surface derivative of this tensor field is defined by
	\[
		\DDer_\G\TT{f} \colonequals \partial_{\mathbf{p}_j}f^{i_1,\ldots,i_n}\mathbf{e}_{i_1}\otimes\ldots\otimes\mathbf{e}_{i_n}\otimes\mathbf{e}_{j}\,.
	\]
	with $\partial_{\mathbf{p}_j}f=\mathbf{p}_j\cdot\DDer\bar{f}$ a tangential directional derivative, i.e., $\nabla_\G f = (\partial_{\mathbf{p}_j}f)\mathbf{e}_{j} = \P\DDer\bar{f}$ is the surface gradient.
  The derivative $\DDer_\G\TT{f}$ is a rank-$(n+1)$ tensor field on $\G$ but not necessarily tangential.
\end{definition}

\begin{remark}
	The surface derivative corresponds to the Euclidean derivative applied to an extended tensor field $\TT{f}^e=\TT{f}\circ\pi$ that is extended constant in normal direction, i.e., $\DDer_\G\TT{f}=\DDer\TT{f}^e|_\G$. For vector fields $V$ this corresponds to a one-sided projection of the full Euclidean derivative, i.e., $\DDer_\G V= \DDer\bar{V}|_\G\P$. For scalar fields $f$ it is equivalent to the surface gradient, i.e.,  $\DDer_\G f = \nabla_\G f$. This derivative operator can be defined intrinsically in terms of derivatives of a local \EN{parametrisation}{parametrization} and metric tensor and is implemented directly by surface finite elements.
\end{remark}

\begin{definition}[Covariant derivative]\label{def:covariant-derivative}
  Let $\TT{f}\in\TensorFieldTan[n]$ denote a tangential tensor field on the surface $\G$. Denote by $\bar{\TT{f}}$ a smooth extension of $\TT{f}$ as in \cref{def:euclidean-derivative} then the covariant derivative is defined by
	\[
		\CDer\TT{f}\colonequals \PP\DDer\bar{\TT{f}}|_\G\,.
	\]
\end{definition}

\begin{remark}
  The definition of the covariant derivative is independent of the choice of the extension, cf. \cite[Chapter 3]{DC1992Riemannian}\EN{}{,} and can also be written in terms of the tangential surface derivative, i.e., $\CDer\TT{f} = \PP\DDer_\G\TT{f}$.
  For scalar fields $f:\G\to\R$ this corresponds to the classical surface gradient, i.e., $\CDer f = \nabla_\G f = \P\DDer\bar{f}|_\G$. For vector fields $V$ the covariant derivative can be expressed using the projection matrix $\P$ as $\CDer V = \P\DDer\bar{V}|_\G\P$.

\end{remark}

\begin{definition}[Higher-order derivatives]
	For a tensor field $\TT{f}$ and its extension $\bar{\TT{f}}$ as above, we introduce the $s$-order derivatives as
	\[
		\DDer^s	\bar{\TT{f}} \colonequals \DDer\ldots\DDer\bar{\TT{f}}\,,\quad
		\DDer_\G^s \TT{f} \colonequals \DDer_\G\ldots\DDer_\G\TT{f}\,,\text{ and}\quad
		\CDer^s	\PP\TT{f} \colonequals \CDer\ldots\CDer\PP\TT{f}\,,
	\]
	by $s$-times application of the first-order derivatives $\DDer$, $\DDer_\G$\EN{}{,} and $\CDer$, respectively.
\end{definition}

\begin{remark}
	The definitions of the covariant derivative on $\G$ can directly be applied to define a covariant derivative inside the (smooth) elements $K\in\mathcal{K}_{h}$ of the discrete surface $\Gh$. These local definitions are glued together to formally write the discrete surface derivatives of a discrete tensor field $\TT{f}_h:\Gh\to\TensorBundleAmb$ as $\ChDer\PP_h\TT{f}_h(x) \colonequals \CovariantDerivative{K}\PP_h\TT{f}_h(x)$ for $x\in K$. The Euclidean derivative is directly defined by the derivative of an extended tensor field $\bar{\TT{f}}_h$ of $\TT{f}_h$ with $\bar{\TT{f}}_h|_{\Gh}=\TT{f}_h$ and the surface derivative is intrinsically defined in the smooth element $K$, denoted by $\DDer_K$.
\end{remark}

\subsection{Function spaces and norms}\label{sec:function-spaces-and-norms}

In the following we define function spaces for tensor fields on $\G$ and on $\Gh$ for tangential and non-tangential tensor fields. Due to the local definition of (discrete) projections and derivatives, we define all spaces and norms elementwise summed over the elements of the corresponding triangulations. This is denoted by $\K$ and $\Kh$ in the definitions. Elementwise defined spaces are called \emph{broken Sobolev spaces} \cite{HL2020Analysis,ER2020unified}.

\begin{definition}[$L^p$-spaces]
We denote by $\LSpace[p]{\K, \TensorBundle{M}}$, for $M$ either $\G$ or $\R^{d+1}$, the space of tensor fields $\TT{f}:\G\to\TensorBundle{M}$ with finite $L^p$-norm with elementwise integrals, i.e.,
\[
  \LNorm[p]{\TT{f}}{\K}^p\coloneqq\sum_{K\in\K} \int_K \FNorm{\TT{f}}^p\,\textrm{d}\G
\]
with $\FNorm{\TT{f}}$ the Frobenius norm induced by the Frobenius inner product $\FInner{\cdot}{\cdot}$.
\end{definition}

The norm is defined elementwise in order to allow the evaluation of fields that are only defined piecewise, like projected tensors on the discrete surface or derivatives of only piecewise smooth functions. Analogously, the $L^p$-space and norm is defined for functions on the discrete surface $\Gh$ by integrals over the corresponding elements in the grid $\Kh$. The space and corresponding norm is denoted by $\LSpace[p]{\K_h, \R^{d+1}}$.

We will use the shortcut notation $\Norm{\cdot}{\K}\doteq\LNorm[2]{\TT{f}}{\K}$ for the $L^2$-norm. The corresponding $L^2$-inner product will be denoted by $\Inner{\cdot}{\cdot}{\K}\doteq\Inner{\cdot}{\cdot}{\LSpace[2]{\K}}$. The limiting spaces $\LInfSpace{\K}$ and $\LInfSpace{\Kh}$ are defined by the essential supremum over the local tensor Frobenius norm.

We define Sobolev spaces over smooth tensor fields recursively. Therefore, we require that certain derivatives are in a corresponding $L^p$-space. Since covariant derivatives are defined for tangential tensor fields only, we introduce two different spaces with their norms, for tangential and non-tangential tensor fields, respectively.
We set $\SobolevSpace[0][p]{\K, \TensorBundle{M}}\coloneqq\LSpace[p]{\K, \TensorBundle{M}}$ for both, tangential and non-tangential tensor fields, with $M=\G$ or $M=\R^{d+1}$, respectively.

\begin{definition}[Sobolev spaces for tangential tensor fields]
The higher-order Sobolev spaces $\SobolevSpaceTan[s][p]{\K}\doteq\SobolevSpace[s][p]{\K, \TensorBundleTan}$, for $s>0$, of tangential tensor fields is defined as the set of functions $\TT{f}\in\SobolevSpace[s-1][p]{\K, \TensorBundleTan}$ with $s$-order covariant derivative $\CDer^{s}\TT{f}\in\LSpace[p]{\K, \TensorBundle[n+s]{\G}}$. The associated norm $\SobolevNormTan[s][p]{\cdot}{\K}$ is defined by
\[
  \SobolevNormTan[s][p]{\TT{f}}{\K}^p \coloneqq \sum_{j=0}^s \LNorm[p]{\CDer^{j} \TT{f}}{\K}^p\,.
\]
\end{definition}

Similarly, the Sobolev spaces for tensor fields in the ambient space are defined using the surface derivative $\DDer_\G$ instead of the covariant derivative $\CDer$.

\begin{definition}[Sobolev spaces for non-tangential tensor fields]
The Sobolev space $\SobolevSpaceAmb[s][p]{\K}\doteq\SobolevSpace[s][p]{\K, \TensorBundleAmb}$ of tensor fields in the ambient space is defined as the set of functions $\TT{f}\in\SobolevSpace[s-1][p]{\K, \TensorBundleAmb}$ with $s$-order Euclidean derivative $\DDer_\G^{s}\TT{f}\in\LSpace[p]{\K, \TensorBundleAmb[n+s]}$ and norm defined by
\[
  \SobolevNormAmb[s][p]{\TT{f}}{\K}^p \coloneqq \sum_{j=0}^s \LNorm[p]{\DDer_\G^{j} \TT{f}}{\K}^p\,.
\]
\end{definition}

For fields on the discrete surface $\Gh$, tangential tensors are only defined in the interior of the surface elements $K\in\Kh$. So are its (covariant) derivatives. The Sobolev spaces and corresponding norms on $\Gh$ are denoted by $\SobolevSpace[s][p]{\Kh, M}$ and are defined in terms of $\ChDer$ and $\DDer_{\Gh}$ for $M=\Gh$ and $M=\R^{d+1}$, respectively.

If the tensor fields on $\Gamma$ are continuous, we also use the Sobolev spaces $\SobolevSpace[s][p]{\G, M}$ defined by integrals over $\G$ directly. Since the integration over $\G$ can be decomposed into integration over the elements of $\K$, it follows that $\SobolevSpace[s][p]{\G, M}\subset\SobolevSpace[s][p]{\K, M}$.

We denote the spaces and norm with $p=2$ by $\myBold{H}^s_{\text{tan}}\doteq\myBold{W}^{s,2}_{\text{tan}}$ and $\myBold{H}^s\doteq\myBold{W}^{s,2}$.

\subsection{Tensor projections}\label{sec:tensor-projections}

\begin{lemma}[Estimates for tensor projections]\label{lem:P}
	Let $\PP$ and $\PPh$ are tensor projections for rank-$n$ tensor fields on $\G$ and $\Gh$, respectively. Then we have the following estimates for the projections of tensors
\begin{align*}
    \LInfNorm{\PP - \PPh}{\Kh}
      &\leqC  \LInfNorm{\nn-\nnh}{\Kh}\,,\\
    \LInfNorm{\PP - \PP\PPh\PP}{\Kh}
      &\leqC  \LInfNorm{\nn-\nnh}{\Kh}^{2}\,,\\
    \LInfNorm{\PPh - \PPh\PP\PPh}{\Kh}
      &\leqC  \LInfNorm{\nn-\nnh}{\Kh}^{2}\,.
  \end{align*}
\end{lemma}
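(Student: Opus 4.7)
The plan is to reduce each of the three estimates to its analogue for the rank-$1$ projections $\P = \Id-\nn\otimes\nn$ and $\Ph=\Id-\nnh\otimes\nnh$, and then recover the rank-$n$ statement from the componentwise definition of $\PP,\PPh$ via a telescoping argument.

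At the rank-$1$ level, I would first record the direct identities
\begin{align*}
  \P - \Ph &= \nnh\otimes\nnh - \nn\otimes\nn = (\nnh-\nn)\otimes\nnh + \nn\otimes(\nnh-\nn),\\
  \P - \P\Ph\P &= \P(\nnh\otimes\nnh)\P = (\P\nnh)\otimes(\P\nnh),\\
  \Ph - \Ph\P\Ph &= \Ph(\nn\otimes\nn)\Ph = (\Ph\nn)\otimes(\Ph\nn),
\end{align*}
and then exploit $\P\nn=0$, $\Ph\nnh=0$ to rewrite $\P\nnh=\P(\nnh-\nn)$ and $\Ph\nn=\Ph(\nn-\nnh)$. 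Since $|\nn|=|\nnh|=1$ and $\P,\Ph$ are orthogonal projections with operator norm $1$, these identities give immediately
\[
  \LInfNorm{\P-\Ph}{\Kh}\leqC\LInfNorm{\nn-\nnh}{\Kh},\qquad
  \LInfNorm{\P-\P\Ph\P}{\Kh}+\LInfNorm{\Ph-\Ph\P\Ph}{\Kh}\leqC\LInfNorm{\nn-\nnh}{\Kh}^{2},
\]
which are the three claimed bounds in the special case $n=1$.

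To pass to rank-$n$ tensor fields, I would use that by \cref{def:tensor-projection} the tensor projection $\PP$ acts on each slot of a rank-$n$ tensor as $\P$, i.e.\ $\PP=\P\otimes\ldots\otimes\P$ ($n$ factors) on $(\R^{d+1})^{\otimes n}$, and likewise $\PPh=\Ph\otimes\ldots\otimes\Ph$. The elementary telescoping identity
\[
  A_{1}\otimesdots A_{n} - B_{1}\otimesdots B_{n}
   = \sum_{r=1}^{n} B_{1}\otimesdots B_{r-1}\otimes (A_{r}-B_{r})\otimes A_{r+1}\otimesdots A_{n}
\]
for bounded linear operators on $\R^{d+1}$, together with submultiplicativity of the operator norm on tensor products, yields
\[
  \|A_{1}\otimesdots A_{n} - B_{1}\otimesdots B_{n}\|_{\mathrm{op}}
  \leq \bigl(\max_{r}\|A_{r}\|_{\mathrm{op}}\vee\|B_{r}\|_{\mathrm{op}}\bigr)^{n-1}\sum_{r=1}^{n}\|A_{r}-B_{r}\|_{\mathrm{op}}.
\]
I would then apply this with $(A_{r},B_{r})=(\P,\Ph)$ for the first estimate, with $(A_{r},B_{r})=(\P,\P\Ph\P)$ for the second, and with $(A_{r},B_{r})=(\Ph,\Ph\P\Ph)$ for the third. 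In all three cases $\max_{r}\|A_{r}\|_{\mathrm{op}}\vee\|B_{r}\|_{\mathrm{op}}\leq 1$, so the rank-$1$ bounds above carry through to the tensor level, absorbing only a dimension-dependent constant $C=C(n,d)$ into $\leqC$.

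The only nontrivial bookkeeping is to verify that the pointwise Frobenius bound on the tensorial operator norm used in the $L^\infty$-estimate is controlled by the spectral bounds on the $\R^{d+1}$-factors; this is immediate from the equivalence of operator and Frobenius norms on a finite-dimensional space, so no real obstacle arises. All three claims then follow from the telescoping inequality applied to the three rank-$1$ identities listed above.
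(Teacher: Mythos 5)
Your proposal is correct and follows essentially the same route as the paper: reduce to the rank-$1$ case via the identities $\P-\Ph=(\nnh-\nn)\otimes\nnh+\nn\otimes(\nnh-\nn)$ and $\P-\P\Ph\P=(\P\nnh)\otimes(\P\nnh)$ with $\P\nnh=\P(\nnh-\nn)$, then lift to rank $n$ using that $\PP,\PPh$ act factorwise. Your explicit telescoping sum is just the unrolled form of the paper's induction over the tensor rank, so the two arguments coincide in substance.
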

\begin{proof}
	The first estimate follows by induction over the tensor rank $n$, from the local pointwise estimate
\[
    \FNorm{\P - \Ph}^{2} = \FNorm{\nn\otimes \nn - \nnh\otimes\nnh}^{2}
                       = 2(1-\FInner{\nn}{\nnh}^{2})
                    \leq 2\FNorm{\nn-\nnh}^{2}\,.
  \]
Let $\TT{f}^{(n)} = \TT{f}^{(n-1)}\otimes\TT{f}^{(1)}$ be a rank-$n$ tensor on $\Gh$ represented as tensor product of a rank-$(n-1)$ and a rank-$1$ tensor. Then we have the induction relation
\begin{align*}
		\PPh\TT{f}^{(n)} &= \PPh\TT{f}^{(n-1)}\otimes\Ph\TT{f}^{(1)} \\
		(\PP-\PPh)\TT{f}^{(n)} &= \PP\TT{f}^{(n)} - \PPh\TT{f}^{(n)} \\
		&= (\PP-\PPh)\TT{f}^{(n-1)}\otimes\P\TT{f}^{(1)} + \PPh\TT{f}^{(n-1)}\otimes(\P-\Ph)\TT{f}^{(1)}\,.
	\end{align*}
Using triangle inequality and extracting the operator norm of the projection gives by recursion the first result.

	The second estimate follows by induction from the equation for vector fields $\VV{v}$ on $\Gh$, i.e., $(\P - \P\Ph\P)\VV{v} = \langle \P\nnh,\VV{v}\rangle \P\nnh$ and the estimate
	\[
		\FNorm{\P\nnh} \leq \big(1+\frac{1}{2}\FNorm{\nn-\nnh}\big)\FNorm{\nn-\nnh}\,.
	\]
	We can interchange the roles of $\nn$ and $\nnh$ to obtain the last estimate.
\end{proof}

\begin{lemma}[Estimates for the derivatives of tensor projections]\label{lem:dP}
	Let $\PP$ and $\PPh$ are the tensor projections for rank-$n$ tensor fields on $\G$ and $\Gh$, respectively, properly extended to $U_\delta$. Then we have
\begin{align*}
    \LInfNorm{\DDer(\PP - \PPh)}{\Kh}
      &\leqC  h^{\kg-1}\,.
  \end{align*}
\end{lemma}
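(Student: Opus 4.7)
The plan is to reduce the estimate to the rank-1 case and then recurse in the tensor rank, exactly mirroring the induction strategy used in \cref{lem:P}.

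\paragraph*{Rank-1 case.} First, I would compute $\DDer(\P-\Ph)$ directly. Since $\P = \Id - \nn\otimes\nn$ and $\Ph = \Id - \nnh\otimes\nnh$, the product rule gives
\begin{align*}
	\DDer(\P - \Ph) &= \DDer\nnh\otimes\nnh + \nnh\otimes\DDer\nnh - \DDer\nn\otimes\nn - \nn\otimes\DDer\nn\,.
\end{align*}
I would then rewrite this using the telescoping trick
\begin{align*}
	\DDer(\P-\Ph) = (\DDer\nnh - \DDer\nn)\otimes\nnh + \DDer\nn\otimes(\nnh-\nn) + (\nnh-\nn)\otimes\DDer\nnh + \nn\otimes(\DDer\nnh - \DDer\nn)\,,
\end{align*}
which isolates differences of normals and differences of Weingarten-type factors. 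Now apply \eqref{eq:n-nh}, giving $\LInfNorm{\nn-\nnh}{\Kh}\leqC h^{\kg}$, and \eqref{eq:w-wh}, giving $\LInfNorm{\DDer\nn - \DDer\nnh}{\Kh}\leqC h^{\kg-1}$, together with $\LInfNorm{\nn}{\Kh},\LInfNorm{\nnh}{\Kh}\leq 1$ and $\LInfNorm{\DDer\nn}{\Kh}\leq C$. The weakest term is $h^{\kg-1}$, so $\LInfNorm{\DDer(\P-\Ph)}{\Kh}\leqC h^{\kg-1}$.

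\paragraph*{Higher-rank induction.} Following the product decomposition in the proof of \cref{lem:P}, I would write a rank-$n$ tensor as $\TT{f}^{(n)} = \TT{f}^{(n-1)}\otimes\TT{f}^{(1)}$ and use
\begin{align*}
	(\PP - \PPh)\TT{f}^{(n)} = (\PP-\PPh)\TT{f}^{(n-1)}\otimes\P\TT{f}^{(1)} + \PPh\TT{f}^{(n-1)}\otimes(\P-\Ph)\TT{f}^{(1)}\,.
\end{align*}
Applying $\DDer$ and the product rule produces terms of two types: a derivative falling on a projection difference (handled by the inductive hypothesis or the rank-1 computation above, yielding $h^{\kg-1}$), and a derivative falling on an undifferentiated projection (bounded uniformly) multiplied by a projection difference (bounded by $h^{\kg}$ from \cref{lem:P}). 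Every term is therefore $\mathcal{O}(h^{\kg-1})$. Taking the operator norm and recursing yields the claimed estimate for arbitrary tensor rank $n$.

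\paragraph*{Main obstacle.} The only delicate point is that $\PP$ and $\PPh$ are projections of \emph{tensors}, so one must be careful in interpreting $\DDer\PP$ as a multilinear operator acting on its argument tensor and organize the product rule correctly across all $n$ tensor slots. However, since the projections act componentwise in the tensor-product decomposition, no new phenomena arise beyond the rank-1 estimate, and the bookkeeping is entirely mechanical once the induction of \cref{lem:P} is in place. The geometric content of the estimate is carried entirely by \eqref{eq:n-nh} and \eqref{eq:w-wh}.
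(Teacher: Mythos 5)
Your proposal is correct and follows essentially the same route as the paper: the rank-1 case is handled by the identical product-rule-plus-telescoping decomposition (the paper writes it in index notation with the Weingarten maps $w^{ij}$, $w_h^{ij}$, but the terms and the use of \eqref{eq:n-nh} and \eqref{eq:w-wh} are the same), and the higher-rank case is the same induction over the tensor rank borrowed from the proof of \cref{lem:P}.
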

\begin{proof}
	For tangential projections $\P$ and $\Ph$ we can write locally
	\begin{align*}
		\DDer(\nn\otimes\nn - \nnh\otimes\nnh)
			&= [w_h^{ik} n_h^j + n_h^i w_h^{jk} - w^{ik} n^j - n^i w^{jk}]\mathbf{e}_{i}\otimes\mathbf{e}_{j}\otimes\mathbf{e}_{k} \\
			&= [w_h^{ik} (n_h^j - n^j) + (n_h^i - n^i) w_h^{jk} \\
			&\quad + (w_h^{ik} - w^{ik}) n^j + n^i (w_h^{jk} - w^{jk})]\mathbf{e}_{i}\otimes\mathbf{e}_{j}\otimes\mathbf{e}_{k}\,,
	\end{align*}
	with $\Weingarten=-\DDer\nn=w^{ij}\mathbf{e}_i\otimes\mathbf{e}_j$ and $\Weingarten_h=-\DDer\nnh=w_h^{ij}\mathbf{e}_i\otimes\mathbf{e}_j$ the continuous and discrete surface Weingarten maps.
	The estimate then follows from  \eqref{eq:n-nh} and \eqref{eq:w-wh}. For tensor projections, the assertion follows by induction over the tensor rank, see the proof of \cref{lem:P}.
\end{proof}

\subsection{Comparison of tensor fields on the discrete and continuous surface}\label{sec:l2-estimates-for-tensor fields}
The relation between tensor fields and its extensions and lifts in norms and inner products is given by the following Lemma.

\begin{lemma}[$L^2$-norm equivalence]\label{lem:l2-norm-equiv}
	Let $\TT{f},\TT{g}\in \LSpace[2]{\K,\TensorBundleAmb}$ be ambient tensor fields. Then we have the estimate.
	\begin{align*}
		\Abs{\Inner{\TT{f}}{\TT{g}}{\K} - \Inner{\TT{f}^{e}}{\TT{g}^{e}}{\Kh}}
		&\leqC h^{\kg+1}\Norm{\TT{f}^{e}}{\Kh}\Norm{\TT{g}^{e}}{\Kh}\,.
	\end{align*}
	For $h<h_0$ small enough this implies $L^2$-norm equivalence, i.e.,
	\begin{align*}
		\Norm{\TT{f}^{e}}{\Kh} \leqC \Norm{\TT{f}}{\K} \leqC \Norm{\TT{f}^{e}}{\Kh}\,.
	\end{align*}
\end{lemma}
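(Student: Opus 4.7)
The plan is to convert the inner product over $\K$ into one over $\Kh$ by the change of variables induced by the closest-point projection $\pi$, so that the two inner products differ only through the Jacobian factor $\Abs{\det(\dPi)}$. Since $\pi:\Gh\to\G$ is a bijection on each element with surface measure $d\G=\Abs{\det(\dPi)}\,d\Gh$, and since by definition $\TT{f}^e=\TT{f}\circ\pi$, $\TT{g}^e=\TT{g}\circ\pi$, we can write
\begin{equation*}
\Inner{\TT{f}}{\TT{g}}{\K} \;=\; \sum_{K\in\Kh}\int_{K}\FInner{\TT{f}^e}{\TT{g}^e}\,\Abs{\det(\dPi)}\,d\Gh.
\end{equation*}
Subtracting $\Inner{\TT{f}^e}{\TT{g}^e}{\Kh}$ then isolates the factor $\Abs{\det(\dPi)}-1$ inside the integrand.

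Next I would apply the Cauchy--Schwarz inequality elementwise, pull out the $L^\infty$-norm of $\Abs{\det(\dPi)}-1$, and invoke the bound $\LInfNorm{1-\Abs{\det(\dPi)}}{\Gh}\leqC h^{\kg+1}$ from \cref{lem:B}. This yields directly
\begin{equation*}
\Abs{\Inner{\TT{f}}{\TT{g}}{\K}-\Inner{\TT{f}^e}{\TT{g}^e}{\Kh}}
 \;\leq\; \LInfNorm{1-\Abs{\det(\dPi)}}{\Gh}\,\Norm{\TT{f}^e}{\Kh}\,\Norm{\TT{g}^e}{\Kh}
 \;\leqC\; h^{\kg+1}\,\Norm{\TT{f}^e}{\Kh}\,\Norm{\TT{g}^e}{\Kh},
\end{equation*}
which is the first assertion.

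For the norm equivalence, I would specialize to $\TT{g}=\TT{f}$. The first estimate then reads $\bigl|\Norm{\TT{f}}{\K}^2-\Norm{\TT{f}^e}{\Kh}^2\bigr|\leqC h^{\kg+1}\Norm{\TT{f}^e}{\Kh}^2$, so
\begin{equation*}
(1-Ch^{\kg+1})\,\Norm{\TT{f}^e}{\Kh}^2 \;\leq\; \Norm{\TT{f}}{\K}^2 \;\leq\; (1+Ch^{\kg+1})\,\Norm{\TT{f}^e}{\Kh}^2,
\end{equation*}
and choosing $h_0$ so small that $Ch_0^{\kg+1}\leq 1/2$ makes both multiplicative constants comparable to $1$, giving the two-sided bound.

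The calculation is essentially routine; the only conceptual point is that everything has been defined elementwise (broken spaces), so the change of variables must be applied element by element and the estimates from \cref{lem:B} already hold in $\LInfSpace{\Kh}$, so no additional work is needed. No tensor-specific structure enters the argument beyond the fact that the Frobenius inner product $\FInner{\cdot}{\cdot}$ is preserved under pullback by $\pi$ (since we compare $\TT{f}$ and $\TT{f}^e$ at corresponding points without any basis transformation), which is exactly the setting of \cref{def:extension}.
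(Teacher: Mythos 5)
Your proposal is correct and follows exactly the paper's argument: a change of variables via $\pi$ turning $\Inner{\TT{f}}{\TT{g}}{\K}$ into $\Inner{\Abs{\det(\dPi)}\TT{f}^{e}}{\TT{g}^{e}}{\Kh}$, followed by Cauchy--Schwarz and the bound $\LInfNorm{1-\Abs{\det(\dPi)}}{\Gh}\leqC h^{\kg+1}$ from \cref{lem:B}. Your derivation of the norm equivalence by setting $\TT{g}=\TT{f}$ and absorbing the $Ch^{\kg+1}$ factor for $h$ small is the standard completion that the paper leaves implicit.
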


\begin{proof}
	The estimate is analogous to the well-known estimate for the lifting of functions \cite{De2009Higher}.
	We write
	\begin{align*}
		\Abs{\Inner{\TT{f}}{\TT{g}}{\K} - \Inner{\TT{f}^{e}}{\TT{g}^{e}}{\Kh}}
		&= \Abs{\Inner{\Abs{\det(B)}\TT{f}^{e}}{\TT{g}^{e}}{\Kh} - \Inner{\TT{f}^{e}}{\TT{g}^{e}}{\Kh}}\\
		&\leq \LInfNorm{1-\Abs{\det(B)}}{\Kh}\Norm{\TT{f}^{e}}{\Kh}\Norm{\TT{g}^{e}}{\Kh}\,.
	\end{align*}
	The estimate follows from properties of $B$, see \cref{lem:B}.
\end{proof}

In the following we will often make use of the fact that while the discrete normal part of a tangential vector field does not vanish, it at least converges to zero with an order that is determined by the geometric approximation order $\kg$. For inner products this order of convergence is even one order better than for norm estimates which is proven as a so-called ``non-standard geometry estimate'' for vector fields in \cite{HL2020Analysis}.

\begin{lemma}[Standard and non-standard estimates for mixed projections]\label{lem:mixed_projections}
	For a tensor field $\TT{f}\in \LSpace[p]{\K, \TensorBundleAmb}$ we have for all $p\in[1,\infty]$
	\[
    \LNorm[p]{\QQh\PP \TT{f}}{\K} \leqC h^{k_g}\LNorm[p]{\PP \TT{f}}{\K}.
	\]
	For vector fields $\VV{v}\in\SobolevSpaceAmb[1][1]{\K, \TensorBundleAmb}$, we have the estimate
	\[
    \Inner{\VV{v}}{\P\nnh}{\K} \leqC h^{\kg+1}\SobolevNormAmb[1][1]{\VV{v}}{\K}\,.
	\]
	For tensor fields $\TT{f},\TT{g}\in \HSpaceAmb{\K, \TensorBundleAmb}$, we have
	\begin{align}\label{eq:very-evil-trick}
		\Inner{\QQh \TT{f}^e}{\PP\TT{g}^e}{\Kh}\leqC h^{k_g+1}\HNormAmb{\TT{f}}{\K}\HNormAmb{\TT{g}}{\K}.
	\end{align}
\end{lemma}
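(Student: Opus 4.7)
The plan is to dispatch each of the three inequalities in turn, each building on its predecessor.

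For the first, idempotency of $\PP$ gives
\[
\QQh\PP\TT{f} = (\Id-\PPh)\PP\TT{f} = (\PP-\PPh)(\PP\TT{f}).
\]
The first inequality of \cref{lem:P} combined with \eqref{eq:n-nh} yields $\LInfNorm{\PP-\PPh}{\Kh}\leqC h^{\kg}$, so the $L^p$-bound on $\Kh$ is immediate, and the determinant estimates of \cref{lem:B} transfer it to $\K$ by exactly the same argument used to prove \cref{lem:l2-norm-equiv}, now applied in $L^p$ instead of $L^2$.

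The non-standard inner-product estimate is the real heart of the lemma, because the naive bound $\Inner{\VV{v}}{\P\nnh}{\K}\le\LNorm[1]{\VV{v}}{\K}\LInfNorm{\P\nnh}{\Kh}\leqC h^{\kg}\LNorm[1]{\VV{v}}{\K}$ falls one power of $h$ short. The plan is to exploit that $\P\nn=0$, whence $\P\nnh=\P(\nnh-\nn)$, and that $\nnh$ is the normal field of the Lagrange interpolant $\pi_h$ of the closest-point projection $\pi$. After lifting the integral from $\K$ to $\Kh$ via the Jacobian bounds of \cref{lem:B}, I would integrate by parts elementwise on $\Kh$ with the surface divergence theorem on each curved patch. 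Interelement boundary contributions cancel on the closed surface $\Gh$ up to jump terms of order $h^{\kg+1}$, and the remaining bulk integrals pair $\DDer_{\Gh}\VV{v}^e$ against the nodal interpolation remainder $\pi-\pi_h$, whose $L^\infty$-norm is $\leqC h^{\kg+1}$ by \cite[Prop.~2.3]{De2009Higher}. Together with the uniform bounds on $\nnh$ and $\DDer\nnh$ from \eqref{eq:n-nh}--\eqref{eq:w-wh}, this recovers the missing factor of $h$ and delivers the $\SobolevNormAmb[1][1]{\VV{v}}{\K}$-estimate. This integration-by-parts step, analogous to the vector-field argument in \cite{HL2020Analysis}, is the only substantive obstacle.

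For the tensor estimate \eqref{eq:very-evil-trick}, I would reduce slot-by-slot to the vector case. Writing $\QQh\TT{f}^e=\TT{f}^e-\PPh\TT{f}^e$ and expanding $\PPh$ as the composition of $n=k+l$ single-slot projections, a telescoping identity expresses $\QQh\TT{f}^e$ as a finite sum whose summands each contain exactly one rank-one normal factor $\nnh\otimes\nnh$ in one slot and bounded projections in the remaining slots. Pairing each summand against $\PP\TT{g}^e$ in the $\Kh$-inner product contracts that single $\nnh$-slot against a $\PP$-projected slot of $\TT{g}^e$, producing exactly one scalar factor $\langle\P\nnh,\cdot\rangle$; the other slots yield bounded tangential-against-tangential contractions. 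The integral therefore reduces to scalar pairings of the form $\Inner{\langle\nnh,V\rangle W}{\P\nnh}{\K}$, with $V$ and $W$ linear contractions of $\TT{f}^e$ and $\TT{g}^e$ against smooth bounded coefficients. Applying the non-standard estimate of the second part to the test field $\VV{v}\colonequals\langle\nnh,V\rangle W$, whose $\SobolevSpaceAmb[1][1]{\K}$-norm is controlled by $\HNormAmb{\TT{f}}{\K}\HNormAmb{\TT{g}}{\K}$ via Cauchy--Schwarz and the product rule (absorbing $\DDer\nnh$ through \eqref{eq:w-wh}), delivers the claimed $h^{\kg+1}$ order.
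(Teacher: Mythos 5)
Your proof follows essentially the same route as the paper's. For the first estimate the paper likewise uses the orthogonality $\QQh\PPh=0$ (equivalently your identity $\QQh\PP\TT{f}=(\PP-\PPh)\PP\TT{f}$) together with \cref{lem:P}; note that no transfer between $\K$ and $\Kh$ is actually needed here, since both sides of the inequality are already norms over $\K$ and the bound $\LInfNorm{\PP-\PPh}{}\leqC h^{\kg}$ is a pointwise operator estimate. For the third estimate your slot-by-slot telescoping is exactly the explicit form of the paper's ``induction over the tensor rank'': the paper writes the rank-one case as $\Inner{\Qh\VV{v}^e}{\P\VV{w}^e}{\Kh}=\Inner{\P\nnh}{\P\langle\VV{v}^e,\nnh\rangle\VV{w}^e}{\Kh}$ and then applies the second estimate to the product field, bounding its $\myBold{W}^{1,1}$-norm by Cauchy--Schwarz, just as you do; you should only add the $L^2$-norm equivalence of \cref{lem:l2-norm-equiv} to pass from the $\Kh$-inner product to the $\K$-inner product before invoking the second estimate, as the paper does. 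The one point of divergence is the second, ``non-standard'' estimate: the paper does not prove it but cites \citet{HL2020Analysis}, whereas you sketch the integration-by-parts mechanism behind it. Your sketch names the right ingredients ($\P\nnh=\P(\nnh-\nn)$, elementwise divergence theorem, the $h^{\kg+1}$ interpolation bound on $\pi-\pi_h$), but the cancellation of interelement boundary contributions up to $h^{\kg+1}$ is asserted rather than verified; since the paper itself outsources this step to the reference, this is not a gap relative to the paper, but it is the only part of your argument that is not self-contained.
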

\begin{proof}
	The first estimate follows from \cref{lem:P} and orthogonality of $\QQh$ and $\PPh$. The second estimate is proven by \citet{HL2020Analysis}.
	The last estimate follows for vector fields $\VV{v}$ and $\VV{w}$ on $\G$ from
\begin{align*}
    \Inner{\Qh \VV{v}^e}{\P \VV{w}^e}{\Kh}
			&= \Inner{\P\nnh}{\P \FInner{\VV{v}^e}{\nnh}\VV{w}^e}{\Kh} \\
			&\leqC h^{\kg+1}\SobolevNormAmb[1][1]{\FInner{\VV{v}}{\nnh}\VV{w}}{\K} \\
			&\leqC h^{\kg+1}\HNormAmb{\VV{v}}{\K}\HNormAmb{\VV{w}}{\K}\,,
  \end{align*}
using the norm equivalence of \cref{lem:l2-norm-equiv}. For tensor fields the estimate follows by induction.
\end{proof}

\subsection{Comparison of covariant derivatives of tensor fields on the discrete and continuous surface}\label{sec:h1-estimates-for-tensor fields}

We now consider gradients of tensor fields. Given a non-tangential tensor field $\TT{f}$ on $\G$ we write out the tangential gradient of the tangential part of $\TT{f}$ by
\begin{align}
	\CDer\PP\TT{f} &= \PP\DDer\PP\TT{f}^{e} \\
								 &= \PP\DDer{\TT{f}}^{e} - \PP\DDer\QQ\TT{f}^{e} \notag
\end{align}

\begin{remark}
	Analogously, we introduce the derivative of the tangential part of tensor fields $\TT{f}_h$ on $\Gh$, locally inside the elements of the triangulation $K\in\mathcal{K}_h$, by
\begin{align*}
		\CovariantDerivative{K}\PPh\TT{f}_h
			&= \PPh\DDer\TT{f}_h^{e} - \PPh\DDer\QQh\TT{f}_h^{e} \\
			&= \PPh\DDer_K\TT{f}_h - \PPh\DDer_K\QQh\TT{f}_h\,.
	\end{align*}
The second form is typically used in surface finite element codes.
\end{remark}

\begin{lemma}\label{lem:pdq}
	For (non-tangential) tensor fields  $\TT{f}:\G\to\TensorBundleAmb$ the following estimate holds for the derivative of the normal part:
\begin{align}\label{eq:pdq}
		\Norm{\PP\DDer\QQ\TT{f}^{e}}{\K}
			\leqC \LInfNorm{\Weingarten}{\K}\Norm{\QQ\TT{f}}{\K} \,,
	\end{align}
	with $\Weingarten=-\DDer\nn=w^{ij}\mathbf{e}_i\otimes\mathbf{e}_j$ the continuous extended Weingarten map.
Interchanging the roles of $\QQ$ and $\PP$, we estimate analogously
\begin{align}\label{eq:qdp}
		\Norm{\QQ\DDer\PP\TT{f}^{e}}{\K}
			\leqC \LInfNorm{\Weingarten}{\K}\Norm{\PP\TT{f}}{\K} \,.
	\end{align}
\end{lemma}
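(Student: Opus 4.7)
The plan is to apply the product rule for $\DDer$ to $\QQ\TT{f}^e$ and $\PP\TT{f}^e$ respectively, and then reduce to three pointwise structural facts: the orthogonality $\PP\QQ = 0$ on tensors of each rank; the tangentiality of the Weingarten map $\Weingarten = \P\Weingarten\P$ (which follows from $|\nn|=1$ and the symmetry of $\DDer\nn$, and implies in particular $\P\nn = 0$); and---for \eqref{eq:qdp} only---the normal constancy of the extension, $\partial_\nn\TT{f}^e = 0$, coming from $\TT{f}^e = \TT{f}\circ\pi$ with $\pi$ the closest-point projection.

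For \eqref{eq:pdq} I would write $\DDer(\QQ\TT{f}^e) = (\DDer\QQ)\TT{f}^e + \QQ\DDer\TT{f}^e$. On the first $n$ tensor slots the outer $\PP$ annihilates $\QQ$ by orthogonality, killing the second summand. In the remaining summand, expanding the slotwise form of $\PP$ by the product rule and using $\partial_k\P^{ij} = w^{ik}n^j + n^i w^{jk}$ yields, for each slot $r$, two kinds of contributions: the $n^{i_r}$-piece is killed by the outer $\P^{a_r i_r}$ via $\P\nn = 0$, and the $w^{i_r k}n^{j_r}$-piece reduces, after the outer tangential projection, to $\Weingarten^{a_r c}$ (by $\P\Weingarten\P = \Weingarten$) times a factor $n^{j_r}$ contracting slot $r$ of $\TT{f}^e$ against $\nn$. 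This last contraction is precisely the slotwise normal projection of $\TT{f}^e$ restricted to slot $r$, whose $L^2$-norm is controlled by $\Norm{\QQ\TT{f}}{\K}$ since the slotwise normal projection in slot $r$ coincides with its composition with $\QQ$. Summation over the $n$ slotwise contributions completes the estimate.

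For \eqref{eq:qdp} the analogous decomposition $\DDer(\PP\TT{f}^e) = (\DDer\PP)\TT{f}^e + \PP\DDer\TT{f}^e$ is applied, and this time the outer $\QQ$ on the second summand reduces to $\nn$-contraction in the derivative slot, which vanishes by $\partial_\nn\TT{f}^e = 0$. Expanding $(\DDer\PP)\TT{f}^e$ slotwise in the same way, the $w^{a_r k}n^{j_r}$-piece is tangential in both outer slots (by $\Weingarten = \P\Weingarten\P$) and is killed by $\QQ$, while the $n^{a_r}w^{j_r k}$-piece survives: the factor $n^{a_r}$ is normal in slot $r$ (so survives $\QQ$), and using $\Weingarten = \P\Weingarten\P$ once more the slot-$r$ contraction absorbs a tangential projection into $\TT{f}^e$, so that the remaining contraction is against $\PP\TT{f}$. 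The pointwise bound by $|\Weingarten|\,|\PP\TT{f}|$ then yields the estimate after summation over $r$.

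The main obstacle is the combinatorial bookkeeping of the product-rule contributions: identifying precisely which cancel under the outer projection (via $\P\nn = 0$, or $\partial_\nn\TT{f}^e = 0$ for \eqref{eq:qdp}) and organizing the survivors into a clean sum of Weingarten-controlled terms against either $\QQ\TT{f}$ or $\PP\TT{f}$.
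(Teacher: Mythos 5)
Your argument is correct and is essentially the explicit, index-level version of the paper's own proof: the paper records the same two facts as the compact operator identities $\PP\DDer\QQ\TT{f}^{e}=(\PP\DDer\QQ)\QQ\TT{f}^{e}$ and $\QQ\DDer\PP\TT{f}^{e}=(\QQ\DDer\PP)\PP\TT{f}^{e}$ together with the bound $\FNorm{\DDer\QQ}\leqC\FNorm{\Weingarten}$, which is exactly what your slotwise bookkeeping (product rule, $\P\nn=0$, $\Weingarten=\P\Weingarten\P$, and $\partial_{\nn}\TT{f}^{e}=0$ for the second estimate) establishes.
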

\begin{proof}
	By the product rule, we have for a tensor
	\begin{align*}
		\PP\DDer\QQ\TT{f}^{e} = (\PP\DDer\QQ)\QQ\TT{f}^{e}.
	\end{align*}
	As the norm of $\DDer\Q$ is bounded, i.e., $\FNorm{\DDer \Q}\leqC\FNorm{\Weingarten}$, so are the norms of $\DDer\P$\EN{}{,} and by induction also the norms of $\DDer\PP$ and $\DDer\QQ$. Thus, the estimate \eqref{eq:pdq} follows. Analogously, we can write
	\begin{align*}
		\QQ\DDer\PP\TT{f}^{e} = (\QQ\DDer\PP)\PP\TT{f}^{e}.
	\end{align*}
\end{proof}

\begin{remark}\label{rem:pdq_h}
	We obtain on each element $K\in\mathcal{K}_h$ with similar arguments as in \cref{lem:pdq} the estimates for discrete projections
	\begin{align}
		\Norm{\PPh\DDer\QQh\TT{f}_h^{e}}{K}
		& \leqC \LInfNorm{\Weingarten_h}{K}\Norm{\QQh\TT{f}_h}{K}\,,\label{eq:pdq-disc}\\
		\Norm{\QQh\DDer\PPh\TT{f}_h^{e}}{K}
		& \leqC \LInfNorm{\Weingarten_h}{K}\Norm{\PPh\TT{f}_h}{K} \,,\label{eq:qdp-disc}
	\end{align}
	where $\Weingarten_h=-\DDer\nnh$ denotes the discrete Weingarten map on the elements of $\Gh$.
\end{remark}

\begin{remark}
	For tangential tensor fields $\TT{f}\in \HSpaceTan[1]{\K}$ \cref{lem:pdq} implies
	\begin{align*}
		\Norm{\DDer\TT{f}^{e}}{\K} & = \Norm{\PP\DDer\PP\TT{f}^{e}}{\K} +\Norm{\QQ\DDer\PP\TT{f}^{e}}{\K} \\
		&\leqC \Norm{\CDer\TT{f}}{\K} + \LInfNorm{\Weingarten}{\K}\Norm{\TT{f}}{\K}\\
		&\leqC \HNormTan[1]{\TT{f}}{\K}.
	\end{align*}
\end{remark}

For higher order derivatives a similar estimate holds:

\begin{lemma}\label{lem:gradient-estimate}
	For $\TT{f}\in\HSpaceTan[m]{\K}$ with $m\geq 1$, the ambient Sobolev norm is bounded by the tangential Sobolev norm:
	\begin{align*}
		\Norm{\DDer^m\TT{f}^e}{\K} = \Norm{\DDer_\G^m\TT{f}}{\K}
		&\leqC \HNormTan[m]{\TT{f}}{\K}\,.
	\end{align*}
\end{lemma}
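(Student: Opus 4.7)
The equality $\Norm{\DDer^m\TT{f}^e}{\K}=\Norm{\DDer_\G^m\TT{f}}{\K}$ is a pointwise identity. Since $\TT{f}^e=\TT{f}\circ\pi$ is constant along normal geodesics in $U_\delta$ and $\partial_\nn$ commutes with $\DDer$, every Euclidean derivative $\DDer^j\TT{f}^e$ is itself constant in normal direction. Iterating the defining relation $\DDer_\G\TT{g}=\DDer\TT{g}^e|_\G$ and using that $(\DDer^j\TT{f}^e)^e=\DDer^j\TT{f}^e$ in $U_\delta$, a straightforward induction on $j$ yields $\DDer^j\TT{f}^e|_\G=\DDer_\G^j\TT{f}$.

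For the main inequality I argue by induction on $m$ (uniformly in the rank). The base case $m=1$ is exactly the content of the remark immediately preceding the lemma, which combines the split $\DDer\TT{f}^e=\PP\DDer\PP\TT{f}^e+\QQ\DDer\PP\TT{f}^e=\CDer\TT{f}+\QQ\DDer\PP\TT{f}^e$ (valid because $\TT{f}$ tangential implies $\PP\TT{f}^e=\TT{f}^e$ in $U_\delta$) with \eqref{eq:qdp} from \cref{lem:pdq} to obtain $\Norm{\DDer\TT{f}^e}{\K}\leqC\HNormTan[1]{\TT{f}}{\K}$.

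For the inductive step I decompose $\DDer\TT{f}^e=(\CDer\TT{f})^e+\TT{R}(\TT{f})$ in $U_\delta$, where $(\CDer\TT{f})^e=\PP\DDer\TT{f}^e$ (both factors are normal-constant and this agrees with $\CDer\TT{f}$ on $\G$), and $\TT{R}(\TT{f}):=\QQ\DDer\TT{f}^e$. The crucial observation, already used in the base case, is that $\TT{R}(\TT{f})$ is purely algebraic in $\TT{f}^e$ and $\Weingarten$: differentiating the tangentiality identity $n^{i_r}(f^{i_1,\ldots,i_n})^e=0$ yields $n^{i_r}\partial_j(f^{i_1,\ldots,i_n})^e=w^{i_r}_j(f^{i_1,\ldots,i_n})^e$, so every slot contraction of $\DDer\TT{f}^e$ with $\nn$ in a data slot $r\leq n$ produces a $\Weingarten$ factor, while contractions in the derivative slot vanish identically. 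Thus $\TT{R}(\TT{f})$ is a finite sum of tensor contractions of $\Weingarten^{\otimes s}$ (with $1\leq s\leq n$) against $\TT{f}^e$, involving no derivative of $\TT{f}$.

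Now apply $\DDer^{m-1}$ and the triangle inequality. The first summand $\DDer^{m-1}(\CDer\TT{f})^e$ is controlled by the inductive hypothesis applied to the tangential rank-$(n+1)$ tensor $\CDer\TT{f}$, yielding $\Norm{\DDer^{m-1}(\CDer\TT{f})^e}{\K}\leqC\HNormTan[m-1]{\CDer\TT{f}}{\K}\leq\HNormTan[m]{\TT{f}}{\K}$. For the second summand, Leibniz expands $\DDer^{m-1}\TT{R}(\TT{f})$ into a finite sum of terms of the form $(\DDer^k\Weingarten^{\otimes s})\otimes\DDer^{m-1-k}\TT{f}^e$ with $0\leq k\leq m-1$; smoothness of $\G$ gives $\LInfNorm{\DDer^k\Weingarten^{\otimes s}}{\K}\leq C$, and the inductive hypothesis (at order $m-1-k<m$) bounds $\Norm{\DDer^{m-1-k}\TT{f}^e}{\K}\leqC\HNormTan[m-1-k]{\TT{f}}{\K}\leq\HNormTan[m]{\TT{f}}{\K}$, closing the induction. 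The main obstacle is the algebraic reduction of $\TT{R}(\TT{f})$ to an expression in $\Weingarten$ and $\TT{f}^e$ that is valid throughout $U_\delta$ (not only on $\G$) and costs no derivative of $\TT{f}$; once this is in place the Leibniz expansion and the induction are mechanical.
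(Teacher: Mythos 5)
The paper itself does not prove this lemma; it defers to \cite[Lemma 2.3]{HL2020Analysis}, so your argument has to stand on its own. Your overall strategy (peel off one derivative, split it into a covariant part plus an algebraic Weingarten part, apply Leibniz, and induct) is the right one, but two of the identities you differentiate are false off the surface, and this breaks the induction. First, $\DDer\TT{f}^e$ is \emph{not} constant in the normal direction: for the unit circle or sphere one has $\TT{f}^e(x)=\TT{f}(x/|x|)$, so $\TT{f}^e$ is homogeneous of degree $0$ and $\DDer\TT{f}^e$ is homogeneous of degree $-1$, hence varies along the normal lines. (The commutator $[\partial_{\nn},\DDer]$ does not vanish because the extended normal field is not constant.) Consequently $(\DDer^j\TT{f}^e)^e\ne\DDer^j\TT{f}^e$, your induction for $\DDer^j\TT{f}^e|_\G=\DDer_\G^j\TT{f}$ does not close (already for $j=2$ the chain rule $\DDer\TT{f}^e=(\DDer_\G\TT{f})^e\cdot\DDer\pi$ produces an extra term involving $\DDer^2\pi|_\G\ne 0$), and, more importantly, the decomposition $\DDer\TT{f}^e=(\CDer\TT{f})^e+\QQ\DDer\TT{f}^e$ to which you apply $\DDer^{m-1}$ holds only \emph{on} $\G$: off the surface $\PP\DDer\TT{f}^e$ is not normal-constant and therefore cannot equal the normal-constant field $(\CDer\TT{f})^e$ (in the circle example they differ by the factor $|x|^{-1}$). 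Differentiating with the full Euclidean derivative an identity that is valid only on a hypersurface is not legitimate, and the induction hypothesis cannot be applied to $\DDer^{m-1}(\PP\DDer\TT{f}^e)$ since $\PP\DDer\TT{f}^e$ is not the normal extension of any tangential field.

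The part of your argument that is correct and genuinely useful is the algebraic reduction of $\QQ\DDer\TT{f}^e$: the identities $n^{i_r}(f^{I})^e=0$ and $n^{j}\partial_j(f^{I})^e=0$ do hold throughout $U_\delta$, so $\QQ\DDer\TT{f}^e$ is indeed algebraic in $\TT{f}^e$ and the extended Weingarten map, with no derivative of $\TT{f}$. To repair the proof, replace your decomposition by the exact chain-rule identity valid in all of $U_\delta$, namely $\DDer\TT{f}^e=(\DDer_\G\TT{f})^e\cdot\DDer\pi$ with $\DDer\pi=\P+\rho\,\Weingarten$ smooth and bounded together with all its derivatives, write $(\DDer_\G\TT{f})^e=(\CDer\TT{f})^e+(\QQ\DDer_\G\TT{f})^e$ where the second summand is algebraic in $\TT{f}^e$ and $\Weingarten|_\G$ by the argument of \cref{lem:pdq}, and then run your Leibniz and induction argument; the extra factors $\DDer^k\pi$ are harmless. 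As written, the inductive step is not valid.
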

\begin{proof}
	This is shown by \citet[Lemma 2.3]{HL2020Analysis}.
\end{proof}

The following Lemma gives estimates for the error in derivatives of extended and projected tensor fields.

\begin{lemma}[$H^{1}$-estimates]\label{lem:h1-estimates}
	Let $\TT{f} \in\HSpace[1]{\K, \TensorBundleAmb}$ be a non-tangential tensor field. Then we have
	\begin{align}\label{eq:h1-estimates}
		\Norm{\left(\CDer\PP\TT{f}\right)^{e} - \ChDer\PPh\TT{f}^{e}}{\Kh}\leqC h^{\kg}\left(\Norm{\DDer\TT{f}^e}{\Kh}+ \Norm{\PPh\TT{f}^{e}}{\Kh} + h^{-1}\Norm{\QQh\TT{f}^{e}}{\Kh}\right).
	\end{align}
\end{lemma}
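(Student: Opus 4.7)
The plan is to split both derivatives via the identity $\PP\DDer\PP\TT{f}^e = \PP\DDer\TT{f}^e - \PP\DDer\QQ\TT{f}^e$ (which defines $\CDer\PP\TT{f}$ on $\G$) and its discrete counterpart $\PPh\DDer\PPh\TT{f}^e = \PPh\DDer\TT{f}^e - \PPh\DDer\QQh\TT{f}^e$ on each element of $\Kh$, and then to control the differences of the two resulting pairs of terms separately. A preparatory step, used for both pairs, is to move the evaluation point from $\pi(\tilde x)\in\G$ to $\tilde x\in\Gh$: since $\TT{f}^e$ is constant along the normals of $\G$, a short computation yields $\DDer\TT{f}^e(\tilde x) - \DDer\TT{f}^e(\pi(\tilde x)) = \rho(\tilde x)\,\DDer_\G\TT{f}(\pi(\tilde x))\,\Weingarten$, whose $L^2$-contribution is $\mathcal O(h^{\kg+1})\Norm{\DDer\TT{f}^e}{\Kh}$ and hence absorbed into the leading-order bound.

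For the tangential pair $(\PP\DDer\TT{f}^e)^e - \PPh\DDer\TT{f}^e$, once evaluation points are equated the remaining expression is $(\PP - \PPh)\DDer\TT{f}^e$, which I would bound pointwise by $h^{\kg}\FNorm{\DDer\TT{f}^e}$ using \cref{lem:P} together with \eqref{eq:n-nh}, thereby producing exactly the first contribution $h^{\kg}\Norm{\DDer\TT{f}^e}{\Kh}$. For the normal pair $(\PP\DDer\QQ\TT{f}^e)^e - \PPh\DDer\QQh\TT{f}^e$, I would reuse the mechanism from the proof of \cref{lem:pdq}: $\PP\DDer\QQ$ factors as a bounded multiplication $M$ acting only on the normal component, $\PP\DDer\QQ\TT{f}^e = M\,\QQ\TT{f}^e$ with $\FNorm{M}\leqC\FNorm{\Weingarten}$, and analogously $\PPh\DDer\QQh\TT{f}^e = M_h\,\QQh\TT{f}^e$ with $\FNorm{M_h}\leqC\FNorm{\Weingarten_h}$. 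Writing
\begin{equation*}
	M\,\QQ\TT{f}^e - M_h\,\QQh\TT{f}^e = M(\QQ-\QQh)\TT{f}^e + (M-M_h)\,\QQh\TT{f}^e\,,
\end{equation*}
the first summand I would bound by $h^{\kg}\FNorm{\TT{f}^e}\leqC h^{\kg}\bigl(\FNorm{\PPh\TT{f}^e} + \FNorm{\QQh\TT{f}^e}\bigr)$ via \cref{lem:P} and the orthogonal decomposition $\TT{f}^e = \PPh\TT{f}^e + \QQh\TT{f}^e$, while the second summand, using that $M-M_h$ inherits the derivative-of-projection term $\DDer(\PP-\PPh)$ bounded by $h^{\kg-1}$ in \cref{lem:dP}, contributes $h^{\kg-1}\FNorm{\QQh\TT{f}^e}$.

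The main obstacle is precisely this last estimate. A direct triangle inequality on $\Norm{(\PP\DDer\QQ\TT{f}^e)^e}{\Kh} + \Norm{\PPh\DDer\QQh\TT{f}^e}{\Kh}$ gains no power of $h$, since \cref{lem:pdq} and \cref{rem:pdq_h} only bound each summand by a constant times $\Norm{\QQ\TT{f}^e}{\Kh}$ and $\Norm{\QQh\TT{f}^e}{\Kh}$ respectively. The cancellation has to be exposed algebraically through the difference $M - M_h$, at which point \cref{lem:dP} loses exactly one power of $h$ relative to the zero-order estimate \cref{lem:P}; this is the mechanism responsible for the characteristic $h^{-1}$ weighting of $\Norm{\QQh\TT{f}^e}{\Kh}$ on the right-hand side. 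Throughout, \cref{lem:l2-norm-equiv} is used to switch freely between integration over $\Kh$ and over $\K$ when collecting the various $L^2$-contributions.
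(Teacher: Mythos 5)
Your proposal follows the paper's proof essentially verbatim: the same splitting $\CDer\PP\TT{f}=\PP\DDer\TT{f}^{e}-\PP\DDer\QQ\TT{f}^{e}$ and its discrete analogue, the bound $\FNorm{\PP-\PPh}\leqC h^{\kg}$ for the tangential pair, and for the normal pair the factorization through $\QQ\TT{f}^{e}$ and $\QQh\TT{f}^{e}$ combined with the $h^{\kg-1}$ bound on $\DDer(\QQ-\QQh)$ from \cref{lem:dP}, which is exactly where the $h^{-1}\Norm{\QQh\TT{f}^{e}}{\Kh}$ term originates in the paper as well. Your explicit treatment of the change of evaluation point between $\G$ and $\Gh$ (an $\mathcal{O}(h^{\kg+1})$ correction via $\rho\,\Weingarten$) is a detail the paper leaves implicit in its ``almost everywhere'' identity, but it does not alter the argument.
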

\begin{proof}
We write almost everywhere
\begin{align*}
	\left(\CDer\PP\TT{f}\right)^{e} - \ChDer\PPh\TT{f}^{e}
	&= \PP\DDer\TT{f}^{e} - \PPh\DDer\TT{f}^{e} + \PPh\DDer\QQh\TT{f}^{e} - \PP\DDer\QQ\TT{f}^{e}\,.
\end{align*}
Thus, we can estimate
\begin{align*}
	\Norm{\left(\CDer\PP\TT{f}\right)^{e} - \ChDer\PPh\TT{f}^{e}}{\Kh} &
	\leqC h^{\kg}\Norm{\DDer\TT{f}^e}{\Kh}
	+ \Norm{\PPh\DDer\QQh\TT{f}^{e} - \PP\DDer\QQ\TT{f}^{e}}{\Kh}\,.
\end{align*}
As in the proof of \cref{lem:pdq} we write almost everywhere on $\Gh$
\if\isimastyle1
\begin{align*}
	\PPh\DDer\QQh\TT{f}^{e} - \PP\DDer\QQ\TT{f}^{e}
	&= (\PPh\DDer\QQh)\QQh\TT{f}^{e} - (\PP\DDer\QQ)\QQ\TT{f}^{e} \\
	&= (\PPh(\DDer\QQh - \DDer\QQ))\QQh\TT{f}^{e} + (\PPh\DDer\QQ)(\QQh-\QQ)\TT{f}^{e} +  ((\PPh-\PP)\DDer\QQ)\QQ\TT{f}^{e}.
\end{align*}
\else
\begin{align*}
&	\PPh\DDer\QQh\TT{f}^{e} - \PP\DDer\QQ\TT{f}^{e} \\
&		\quad= (\PPh\DDer\QQh)\QQh\TT{f}^{e} - (\PP\DDer\QQ)\QQ\TT{f}^{e} \\
&		\quad= (\PPh(\DDer\QQh - \DDer\QQ))\QQh\TT{f}^{e} + (\PPh\DDer\QQ)(\QQh-\QQ)\TT{f}^{e} +  ((\PPh-\PP)\DDer\QQ)\QQ\TT{f}^{e}.
\end{align*}
\fi
As $\Norm{\DDer\QQh - \DDer\QQ}{}\leqC h^{\kg-1}$, see \cref{lem:dP}, we obtain
\begin{align*}
	\Norm{\PPh\DDer\QQh\TT{f}^{e} - \PP\DDer\QQ\TT{f}^{e}}{\Kh}
	& \leqC h^{\kg-1}\Norm{\QQh\TT{f}^{e}}{\Kh} + h^{\kg}\Norm{\TT{f}^{e}}{\Kh}\\
	& \leqC h^{\kg} \left(h^{-1}\Norm{\QQh\TT{f}^{e}}{\Kh} + \Norm{\PPh\TT{f}^{e}}{\Kh}\right).
\end{align*}
\end{proof}

\begin{remark}\label{rem:h1-norm-equivalence}
	In analogy to the estimate in \cref{lem:h1-estimates}, we obtain
	\begin{align*}
		\Norm{\left(\CDer\PP\TT{f}\right)^{e} - \ChDer\PPh\TT{f}^{e}}{\Kh}\leqC h^{\kg}\left(\Norm{\DDer\TT{f}^{e}}{\K} + \Norm{\PP\TT{f}}{\K} + h^{-1}\Norm{\QQ\TT{f}}{\K}\right).
	\end{align*}
	with norms on $\G$.
	Thus, we immediately see that for tangential tensor fields $\TT{f}\in \HSpaceTan[1]{\K}$
	\begin{align*}
		\Norm{\left(\CDer\PP\TT{f}\right)^{e} - \ChDer\PPh\TT{f}^{e}}{\Kh}\leqC h^{\kg}\HNormTan[1]{\TT{f}}{\K}.
	\end{align*}
\end{remark}

We introduce a new mixed norm motivated by the structure of the right hand side of~\eqref{eq:h1-estimates}.

\begin{definition}\label{def:GNorm}
	For $\TT{f}_h \in\HSpace[1]{\Kh, \TensorBundleAmb}$ we define the norm
	\begin{align*}
		\GNorm{\TT{f}_h} \colonequals \Norm{\CovariantDerivative{\Gh}\PP_h\TT{f}_h}{\Kh} + \Norm{\DDer\TT{f}_h^e}{\Kh}+ \Norm{\PPh\TT{f}_h}{\Kh} + h^{-1}\Norm{\QQh\TT{f}_h}{\Kh}.
	\end{align*}
\end{definition}

\begin{remark}
	With \cref{rem:pdq_h} we have for $\TT{f}_h \in\HSpace[1]{\Kh, \TensorBundleAmb}$ the estimate
	\begin{align}\label{eq:gnorm-to-h1}
		\GNorm{\TT{f}_h} \leqC \HNormAmb{\TT{f}_h}{\Kh} + h^{-1}\Norm{\QQh\TT{f}_h}{\Kh}.
	\end{align}
From \cref{lem:h1-estimates} it follows that for $\TT{f} \in\HSpace[1]{\K, \TensorBundleAmb}$
	\begin{align}\label{eq:cder-to-gnorm-est}
		\Norm{\CDer\PP\TT{f}}{\K}\leqC \GNorm{\TT{f}^{e}}.
	\end{align}
For tangential tensor fields $\TT{f} \in\HSpaceTan[1]{\K}$ we have
	\begin{align}\label{eq:gnorm-to-h1tan}
		\GNorm{\TT{f}^{e}}\leqC \HNormTan[1]{\TT{f}}{\K}.
	\end{align}
\end{remark}

\section{Finite element \EN{discretisation}{discretization} of tensor equation}\label{sec:finite-element-discretization-of-tensor-equation}

In order to describe approximations of the function space $\HSpaceTan[1]{\K}$, we introduce discrete finite element spaces in the following. A family of finite element spaces on $\Gh$ is based on lifting functions in $\hat{V}_h^m$ from $\hat{\G}_h$ to the \EN{parametrised}{parametrized} surface $\Gh$, i.e.,
\begin{equation}\label{eq:scalar-lagrange-space}
  V_{h}^m \doteq V_{h}^m(\Gh,\R) \coloneqq \Set{ \tilde{v}\in C^0(\Gh) \mid \tilde{v} = \hat{v}\circ\pi_{h}^{-1},\text{ for some }\hat{v}\in \hat{V}_h^m}\,.
\end{equation}

Similarly to the space $\hat{V}_h^m(\hat{\G}_h,\R^N)$ for vector-valued maps on $\hat{\G}_h$, see \eqref{eq:vector-flat-lagrange-space}, we introduce the finite element space for tensor-valued functions on $\Gh$ as product space over $V_{h}^m$ by
\begin{equation}\label{eq:tensor-lagrange-space}
  \Vh[h]^m\doteq V_{h}^m(\Gh,\TensorBundleAmb)\coloneqq \TensorProductSpace{V_{h}^m}{N}\,,
\end{equation}
with $N=(d+1)^n$.
Since each component is continuous and locally differentiable, we have
\[
  \Vh[h]^m\subset \HSpace{\Gh, \TensorBundleAmb}\cap \TT{C}^0(\Gh, \TensorBundleAmb)\,.
\]
In the following, we use the shortcut $\Vh[h]\doteq\Vh[h]^{\ku}$ to denote the tensor finite element space of local polynomial order $\ku$ on $\Gh$.

We introduce a tensor interpolation operators $\Ih$ and $\TT{I}$ as the usual Lagrange interpolation for continuos tensor fields, by using a componentwise scalar Lagrange interpolation operator. For a continuous scalar field on $\hat{\G}_h$, we have the classical Lagrange interpolation $\hat{I}_h^m:C^0(\hat{\G}_h)\to\hat{V}_h^m$, see, e.g., \cite{DE2013Finite}. This induces a lifted scalar interpolation $I_{h}^m:C^0(\Gh)\to V_{h}^m$ for scalar fields $\tilde{v}\in C^0(\Gh)$ by
\begin{equation}\label{eq:scalar-interpolation-operator}
	(I_{h}^m \tilde{v})(\pi_h(x)) = (\hat{I}_h^m \tilde{v}\circ \pi_{h})(x)\text{ for }x\in\hat{\G}_h\,.
\end{equation}

A componentwise scalar interpolation operators defines the tensor Lagrange interpolation $\hat{\TT{I}}_h^m$ and $\TT{I}_h^m$. We write the corresponding interpolation operator for tensor fields $\TT{v}\in\TT{C}^0(\G,\TensorBundleAmb)$ on the continuous surface $\Gamma$ by
\begin{equation}\label{eq:tensor-interpolation-operator}
	(\TT{I}^m\TT{v})^e \coloneqq \TT{I}_{h}^m\TT{v}^e\,,\text{ and }
	(\hat{\TT{I}}^m\TT{v})^e \coloneqq \hat{\TT{I}}_{h}^m\TT{v}^e\,,
\end{equation}
respectively.

A discrete formulation of the continuous variational problem \eqref{eq:continuous_model_problem} can be obtained by approximating the surface $\G$ by $\Gh$, the function space $\HSpaceTan[1]{\K}$ by $\Vh$\EN{}{,} and by projecting tensor fields in the ambient space to the tangent space. The tangentiality of the solution can be obtained by introducing an additional \EN{penalisation}{penalization} term $s_h$ into the bilinear form.

The resulting discrete variational problem reads: Find $\TT{u}_h\in\Vh$, such that,
\begin{equation}\label{eq:discrete_variational_problem}
	A_h(\TT{u}_h, \TT{v}_h) = l_h(\TT{v}_h),\quad\forall\TT{v}_h\in\Vh\,,
\end{equation}
with discrete forms given by
\begin{align*}
	A_h(\TT{u}_h, \TT{v}_h) &\colonequals a_h(\TT{u}_h, \TT{v}_h) + s_h(\TT{u}_h, \TT{v}_h)\,,\\
	a_h(\TT{u}_h, \TT{v}_h) &\colonequals \Inner{\ChDer\PPh\TT{u}_h}{\ChDer\PPh\TT{v}_h}{\Kh}\! + \Inner{\PPh\TT{u}_h}{\PPh\TT{v}_h}{\Kh}\,,\\
	s_h(\TT{u}_h, \TT{v}_h) &\colonequals \beta h^{-2\alpha}\Inner{\QQhTilde\TT{u}_h}{\QQhTilde\TT{v}_h}{\Kh}\,, \\
	l_h(\TT{v}_h)					&\colonequals \Inner{\TT{f}^e}{\PPh\TT{v}_h}{\Kh}\,,
\end{align*}
where $\alpha\in[0,1]$ and $\beta>0$ are \EN{penalisation}{penalization} parameters. The normal projection operator $\QQhTilde$ is defined over a normal vector $\nnhTilde$ that \EN{fulfils}{fulfills} a (possibly) higher-order approximation of the exact surface normal $\nn$, i.e.,
\begin{equation}\label{eq:better-normal}
	\LNorm[\infty]{\nn - \nnhTilde}{\Kh}	\leq C\; h^{\kp}
\end{equation}
with $\kp\geq\kg$ a penalty-term integer order.

\begin{remark}\label{rem:better-normal}
	The normal $\nnhTilde$ could be either given as an interpolation of the exact surface normal, i.e., $\nnhTilde=I_{h,\kg}^{\kp-1}\nn$, or be given as the normal of a discrete surface $\G_{h,\kp}$ approximating $\G$ with higher order, extended to $U_\delta$. We will also \EN{analyse}{analyze} the case $\kp=\kg$ that is especially relevant if you just have a discrete surface given and thus are forced to use $\nnhTilde=\nnh$.

	For the case $\kg=1$ improved normals can be constructed using normal-vector averages or surface reconstruction, cf. \cite{MW2000Surface,XX2005Convergence}, or $L^2$-projections of piecewise constant normal fields into the space $V_h^1$ of piecewise linear functions, cf. \cite{Fritz2013Finite,CH2020Finite}.
\end{remark}

\section{Error analysis}\label{sec:error-analysis}
In this section we prove different types of error estimates culminating in \cref{sec:discretization-errors} in the \EN{discretisation}{discretization} error estimates described in \cref{sec:main-results}. Following the classical procedure we begin with interpolation-errors estimates in \cref{sec:interpolation-errors}. The following up \cref{sec:geometric-errors} discusses differences in discrete and continuous bilinear and linear forms that emerge from the geometry approximation. These are prerequisites for the proofs in \cref{sec:discretization-errors}. The estimates in this final subsection are formulated in various norms. Among these are the energy norms of the associated problems that are defined as follows.

For a bilinear form $\Omega$, we define an associated energy-norm $\EnergyNorm{\cdot}{\Omega}$ by $\EnergyNorm{\TT{v}}{\Omega}^2\colonequals\Omega(\TT{v},\TT{v})$. The discrete energy norm $\EnergyNorm{\cdot}{A_h}$ can thereby be split into parts:
\begin{align*}
	\EnergyNorm{\TT{v}_h}{A_h}^2 \colonequals& A_{h}(\TT{v}_h, \TT{v}_h) = a_{h}(\TT{v}_h, \TT{v}_h) + s_{h}(\TT{v}_h, \TT{v}_h)\\
	=& \underbrace{\Norm{\ChDer\PPh\TT{v}_h}{\Kh}^2 + \Norm{\PPh\TT{v}_h}{\Kh}^2}_{\EnergyNorm{\TT{v}_h}{a_h}^2} + \underbrace{\beta h^{-2\alpha}\Norm{\QQhTilde\TT{v}_h}{\Kh}^2}_{\EnergyNorm{\TT{v}_h}{s_h}^2}
\end{align*}
and the continuos energy norm is written for  $\TT{v}\in\HSpaceAmb{\K}$ using projection operators:
\[
  \EnergyNorm{\TT{v}}{a}^2 \colonequals a(\TT{v}, \TT{v}) \colonequals \Norm{\CDer\PP\TT{v}}{\K}^2 + \Norm{\PP\TT{v}}{\K}^2\,.
\]
For $\TT{v}\in\HSpaceTan{\K}$ this corresponds to the tangential $H^1$-norm.

\subsection{Interpolation errors}\label{sec:interpolation-errors}

Interpolation-error estimates can be deduced from componentwise estimates for the scalar interpolation operator, cf. \cite{De2009Higher}.

\begin{lemma}\label{lem:interpolation-error-estimates-amb}
	For non-tangential tensor fields $\TT{f}\in\HSpaceAmb[m+1]{\K}\cap\TT{C}^0(\G)$ with $m\geq 1$ we have the following interpolation error estimates for $\TT{I}^k\TT{f}=(\TT{I}_h^k\TT{f}^e)^\ell$ the tensor Lagrange interpolation with $0\leq l\leq m+1\leq k+1$,
	\begin{align}
		\HNormAmb[l]{\TT{f} - \TT{I}^k\TT{f}}{\K}
			&\leqC h^{m+1-l}\HNormAmb[m+1]{\TT{f}}{\K}\,, \label{eq:interpolation_L2} \\
		\HNormAmb[1]{\PP(\TT{f} - \TT{I}^k\TT{f})}{\K}
			&\leqC h^{m}\HNormAmb[m+1]{\TT{f}}{\K}\,, \label{eq:interpolation_H1}
	\end{align}
\end{lemma}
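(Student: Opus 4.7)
The strategy is to reduce the two estimates to classical scalar Lagrange interpolation estimates on the piecewise flat reference surface $\hat{\G}_h$ and then transfer the bounds back to $\K$ through the chain of parametrizations $\hat{K}\xrightarrow{\pi_h}K_h\xrightarrow{\pi}K_\G$. Because $\TT{I}^k\TT{f} = (\TT{I}_h^k\TT{f}^e)^\ell$ and $\TT{I}_h^k$ is defined componentwise through the scalar interpolation $\hat{I}_h^k$, it suffices to control each scalar component $f^{i_1\ldots i_n}$ of $\TT{f}^e\circ\pi_h$ on $\hat{K}\in\hat{\K}_h$ separately and then sum over the $N=(d+1)^n$ components and over elements.

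First I would establish the equivalence of the ambient Sobolev norms $\HNormAmb[l]{\cdot}{\K}$, $\HNormAmb[l]{(\cdot)^e}{\Kh}$, and the corresponding norm on $\hat{\K}_h$ for $0\leq l\leq m+1$, up to $h$-independent constants. The $L^2$-case is \cref{lem:l2-norm-equiv}; for $l\geq 1$ the derivatives transform via chain rule through $B$ and $\hat{B}_h$ whose pointwise norms and inverses are uniformly bounded by \cref{lem:B,lem:pi_h-pi_hinv}. This transfer preserves the polynomial order of any interpolation-error estimate. On $\hat{K}$ the classical Bramble--Hilbert lemma gives, componentwise, $\|\hat{v}-\hat{I}_h^k\hat{v}\|_{H^l(\hat{K})}\leqC h^{m+1-l}\|\hat{v}\|_{H^{m+1}(\hat{K})}$ for $0\leq l\leq m+1\leq k+1$. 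Summing the squares over components and over $\hat{K}\in\hat{\K}_h$, then transferring back to $\K$, yields \eqref{eq:interpolation_L2}.

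For the projected estimate \eqref{eq:interpolation_H1} I apply the product rule:
\[
\DDer_\G\bigl[\PP(\TT{f}-\TT{I}^k\TT{f})\bigr] = (\DDer_\G\PP)(\TT{f}-\TT{I}^k\TT{f}) + \PP\,\DDer_\G(\TT{f}-\TT{I}^k\TT{f}).
\]
Since $\DDer_\G\PP$ is pointwise controlled by the smooth (and hence bounded) Weingarten map $\Weingarten$, and $\PP$ is an orthogonal projection, it follows that
\[
\Norm{\DDer_\G\PP(\TT{f}-\TT{I}^k\TT{f})}{\K} \leqC \Norm{\TT{f}-\TT{I}^k\TT{f}}{\K} + \Norm{\DDer_\G(\TT{f}-\TT{I}^k\TT{f})}{\K}.
\]
Combining with $\Norm{\PP(\TT{f}-\TT{I}^k\TT{f})}{\K}\leq\Norm{\TT{f}-\TT{I}^k\TT{f}}{\K}$ and inserting \eqref{eq:interpolation_L2} with $l=0$ and $l=1$ produces an $O(h^{m+1})+O(h^m)=O(h^m)$ bound, which is \eqref{eq:interpolation_H1}.

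The argument is essentially bookkeeping: none of the individual steps is deep, and the only point requiring care is ensuring that the constants arising from the norm-equivalences through $\pi$ and $\pi_h$, as well as from the extension operator and from bounds on $\DDer_\G\PP$, are all genuinely independent of $h$. This is guaranteed by \cref{lem:B,lem:pi_h-pi_hinv} together with smoothness of $\G$ and of the signed-distance function $\rho$ on $U_\delta$.
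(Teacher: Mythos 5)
Your proposal is correct and follows essentially the same route as the paper: \eqref{eq:interpolation_L2} by componentwise reduction to scalar Lagrange interpolation estimates (transferred through the uniformly bounded parametrizations of \cref{lem:B,lem:pi_h-pi_hinv}), and \eqref{eq:interpolation_H1} by the product rule for $\DDer_\G\PP$, bounding the derivative of the projection via the Weingarten map and then invoking \eqref{eq:interpolation_L2} with $l=0,1$. The paper phrases the second step through the complementary projection $\QQ$ (as in \cref{lem:pdq}) rather than $\DDer_\G\PP$ directly, but the two are equivalent since $\DDer\P=-\DDer\Q$.
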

\begin{proof}
	We have the requirement $1\leq d \leq 3$ for the dimension of the geometry and $m\geq 1$. Thus, the first statement follows from standard scalar local interpolation error estimates, applied componentwise and elementwise, cf. \cite[Thm. 3.1.6]{Ciarlet1978The}.

	The second statement can be brought into a compatible form:
	\if\isimastyle1
	\begin{align*}
		\HNormAmb[1]{\PP(\TT{f} - \TT{I}^m\TT{f})}{\K}
			&\leq \Norm{\PP(\TT{f} - \TT{I}^m\TT{f})}{\K} + \Norm{\DDer_{\G}\PP(\TT{f} - \TT{I}^m\TT{f})}{\K} \\
			&\leqC \Norm{\TT{f} - \TT{I}^m\TT{f}}{\K}
					 + \Norm{\DDer(\TT{f} - \TT{I}^m\TT{f})^e}{\K}
					 + \Norm{\DDer\QQ(\TT{f} - \TT{I}^m\TT{f})^e}{\K} \\
			&\leqC h^{m+1}\HNormAmb[m+1]{\TT{f}}{\K}
									+ h^{m}\HNormAmb[m+1]{\TT{f}}{\K}
									+ \LInfNorm{\Weingarten}{\K} \Norm{\QQ(\TT{f} - \TT{I}^m\TT{f})}{\K} \\
			&\leqC \left(h^{m+1} + h^{m}\right)\HNormAmb[m+1]{\TT{f}}{\K}
	\end{align*}
	\else
	\begin{multline*}
		\HNormAmb[1]{\PP(\TT{f} - \TT{I}^m\TT{f})}{\K}
			\leq \Norm{\PP(\TT{f} - \TT{I}^m\TT{f})}{\K} + \Norm{\DDer_{\G}\PP(\TT{f} - \TT{I}^m\TT{f})}{\K} \\
			\begin{aligned}
			&\leqC \Norm{\TT{f} - \TT{I}^m\TT{f}}{\K}
					 + \Norm{\DDer(\TT{f} - \TT{I}^m\TT{f})^e}{\K}
					 + \Norm{\DDer\QQ(\TT{f} - \TT{I}^m\TT{f})^e}{\K} \\
			&\leqC h^{m+1}\HNormAmb[m+1]{\TT{f}}{\K}
									+ h^{m}\HNormAmb[m+1]{\TT{f}}{\K}
									+ \LInfNorm{\Weingarten}{\K} \Norm{\QQ(\TT{f} - \TT{I}^m\TT{f})}{\K} \\
			&\leqC \left(h^{m+1} + h^{m}\right)\HNormAmb[m+1]{\TT{f}}{\K}
		\end{aligned}
	\end{multline*}
	\fi
using \eqref{eq:interpolation_L2}, an argument following \cref{lem:pdq} to extract the Weingarten map, boundedness of the Weingarten map\EN{}{,} and componentwise scalar estimates.
\end{proof}

\begin{lemma}\label{lem:interpolation-error-estimates}
	For tangential tensor fields $\TT{f}\in\HSpaceTan[m+1]{\K}\cap\TT{C}^0(\G)$ with $m\geq 1$ we have the following interpolation error estimates for $\TT{I}^m\TT{f}=(\TT{I}_h^m\TT{f}^e)^\ell$ the tensor Lagrange interpolation,
	\begin{align}
		\Norm{\TT{f} - \TT{I}^m\TT{f}}{\K}
			&\leqC h^{m+1}\HNormTan[m+1]{\TT{f}}{\K}\,, \label{eq:interpolation_tan_L2} \\
		\HNormTan[1]{\PP(\TT{f} - \TT{I}^m\TT{f})}{\K}
			&\leqC h^{m}\HNormTan[m+1]{\TT{f}}{\K}\,, \\
		\EnergyNorm{\TT{f}^e - \TT{I}_h^m\TT{f}^e}{A_h}
			&\leqC h^{m}\HNormTan[m+1]{\TT{f}}{\K}\,,\\
		\GNorm{\TT{f}^e - \TT{I}_h^m\TT{f}^e}
			&\leqC h^{m}\HNormTan[m+1]{\TT{f}}{\K}\,.
	\end{align}
\end{lemma}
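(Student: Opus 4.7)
My plan is to build each of the four estimates out of the ambient-space interpolation bounds in \cref{lem:interpolation-error-estimates-amb}, using the norm-conversion tools collected in Sections 2.2--2.6 (in particular \cref{lem:l2-norm-equiv}, \cref{lem:h1-estimates}, \cref{lem:gradient-estimate}, \cref{lem:P} and \cref{lem:mixed_projections}). The estimates in \cref{lem:interpolation-error-estimates-amb} are stated for non-tangential fields in ambient Sobolev norms, so the task essentially reduces to (i) dominating the tangential norm on the left-hand side by an ambient one, and (ii) replacing the ambient $\SobolevSpaceAmb[m+1]{\K}$ norm on the right-hand side by the tangential one, which is legitimate by \cref{lem:gradient-estimate} since $\TT{f}$ is tangential.

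For the first estimate I would use that $\Norm{\TT{f} - \TT{I}^m\TT{f}}{\K}$ is just the ambient $L^2$-norm, apply \eqref{eq:interpolation_L2} with $l=0$ and $k=m$, and then invoke \cref{lem:gradient-estimate}. For the second estimate I would expand the tangential $H^1$-norm as $\HNormTan[1]{\PP\TT{g}}{\K}^{2}=\Norm{\PP\TT{g}}{\K}^{2}+\Norm{\CDer\PP\TT{g}}{\K}^{2}$, bound $\CDer\PP\TT{g}=\PP\DDer_\G\PP\TT{g}$ pointwise by $\DDer_\G\PP\TT{g}$, and thereby reduce to \eqref{eq:interpolation_H1}.

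For the energy-norm estimate I would split $\EnergyNorm{\TT{f}^e-\TT{I}_h^m\TT{f}^e}{A_h}^{2}$ into the three contributions $\Norm{\ChDer\PPh(\cdot)}{\Kh}^{2}$, $\Norm{\PPh(\cdot)}{\Kh}^{2}$, and $\beta h^{-2\alpha}\Norm{\QQhTilde(\cdot)}{\Kh}^{2}$. The $\PPh$-term is controlled by \cref{lem:l2-norm-equiv} together with the first estimate of this lemma, yielding $h^{m+1}$. For the covariant-derivative term I would use that interpolation commutes with the extension ($\TT{I}_h^m\TT{f}^e=(\TT{I}^m\TT{f})^e$), apply \cref{lem:h1-estimates} to compare $\ChDer\PPh(\cdot)$ on $\Kh$ with $(\CDer\PP(\cdot))^e$ on $\Kh$, and control the continuous piece by the second estimate of this lemma plus \cref{lem:l2-norm-equiv}. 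For the $\QQhTilde$-penalty term I would use crucially that $\TT{f}$ is tangential, so $\QQhTilde\TT{f}^e=(\QQhTilde-\QQ)\TT{f}^e$ has norm of order $h^{\kp}$ by the analogue of \cref{lem:P} applied with \eqref{eq:better-normal}; writing $\QQhTilde(\TT{f}^e-\TT{I}_h^m\TT{f}^e)=\QQhTilde\TT{f}^e-\QQhTilde\TT{I}_h^m\TT{f}^e$ and adding/subtracting $\QQhTilde\TT{f}^e$ splits the second term into a boundable interpolation error and another $\QQhTilde\TT{f}^e$-piece.

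For the $\GNorm$ estimate I would apply \cref{def:GNorm} and bound each summand separately: the $\ChDer\PPh$- and $\PPh$-terms exactly as above; the ambient $\DDer\TT{f}^e$-term directly from \eqref{eq:interpolation_L2} with $l=1$ and \cref{lem:gradient-estimate}; and the $h^{-1}\Norm{\QQh(\cdot)}{\Kh}$-term by the same scheme as the $\QQhTilde$-term, this time with $\nn-\nnh$ of order $h^{\kg}$. The main obstacle throughout is handling the normal-projection terms, since $\TT{I}_h^m\TT{f}^e$ is not tangential even when $\TT{f}$ is; tangentiality of $\TT{f}$ is what lets one replace $\QQh\TT{f}^e$ and $\QQhTilde\TT{f}^e$ by perturbations of the exact normal projection $\QQ$ (which annihilates $\TT{f}$), and it is the interplay of this observation with the factors $h^{-1}$ and $h^{-\alpha}$ that enforces the appropriate compatibility between $m$, $\kg$ and $\kp$ needed for the stated rate $h^m$.
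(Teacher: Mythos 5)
Your overall strategy coincides with the paper's: reduce everything to the ambient interpolation bounds of \cref{lem:interpolation-error-estimates-amb} and convert the right-hand side to the tangential norm via \cref{lem:gradient-estimate}. The first two estimates and the $\ChDer\PPh$-term are handled correctly (for the latter you route through \cref{lem:h1-estimates} where the paper uses the discrete decomposition \eqref{eq:pdq-disc} directly, but both give $h^m$). The genuine problem is your treatment of the normal-projection terms. As written, you would bound $h^{-\alpha}\Norm{\QQhTilde(\TT{f}^e-\TT{I}_h^m\TT{f}^e)}{\Kh}$ by introducing $\Norm{\QQhTilde\TT{f}^e}{\Kh}\leqC h^{\kp}$ as a separate additive contribution, which yields a term $h^{\kp-\alpha}$; this is \emph{not} $\leqC h^m$ in general (take $m=\ku=3$, $\kp=\kg=1$), so the argument would fail to deliver the claimed rate. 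The same defect appears in your plan for the $h^{-1}\Norm{\QQh(\cdot)}{\Kh}$-term of the $\GNorm{\cdot}$ estimate, where it produces $h^{\kg-1}$. Moreover, the ``add and subtract $\QQhTilde\TT{f}^e$'' manipulation you describe is circular: substituting $\QQhTilde\TT{I}_h^m\TT{f}^e=\QQhTilde(\TT{I}_h^m\TT{f}^e-\TT{f}^e)+\QQhTilde\TT{f}^e$ back in just reproduces the quantity you started from.

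The correct step — and the one the paper takes — is much simpler: $\QQhTilde$ and $\QQh$ are orthogonal projections, hence $L^2$-contractions, so
\begin{align*}
	h^{-\alpha}\Norm{\QQhTilde(\TT{f}^e-\TT{I}_h^m\TT{f}^e)}{\Kh}
	\leq h^{-\alpha}\Norm{\TT{f}^e-\TT{I}_h^m\TT{f}^e}{\Kh}
	\leqC h^{m+1-\alpha}\HNormTan[m+1]{\TT{f}}{\K}\leq h^{m}\HNormTan[m+1]{\TT{f}}{\K}
\end{align*}
for $\alpha\in[0,1]$, and analogously $h^{-1}\Norm{\QQh(\TT{f}^e-\TT{I}_h^m\TT{f}^e)}{\Kh}\leqC h^{m}\HNormTan[m+1]{\TT{f}}{\K}$. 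No smallness of $\QQhTilde\TT{f}^e$ and no interplay between $m$, $\kg$ and $\kp$ is needed here; tangentiality of $\TT{f}$ enters only when replacing $\HNormAmb[m+1]{\TT{f}}{\K}$ by $\HNormTan[m+1]{\TT{f}}{\K}$.
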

\begin{proof}
	The first statement follows from \eqref{eq:interpolation_L2} using \cref{lem:gradient-estimate} to obtain the estimate w.r.t. the tangential Sobolev norm.

	The second statement can be brought into a compatible form similar to the proof of \eqref{eq:interpolation_H1}
	\if\isimastyle1
	\begin{align*}
		\HNormTan[1]{\PP(\TT{f} - \TT{I}^m\TT{f})}{\K}
			&\leq \Norm{\PP(\TT{f} - \TT{I}^m\TT{f})}{\K} + \Norm{\CDer\PP(\TT{f} - \TT{I}^m\TT{f})}{\K} \\
			&\leqC \Norm{\TT{f} - \TT{I}^m\TT{f}}{\K}
					 + \Norm{\PP\DDer(\TT{f} - \TT{I}^m\TT{f})^e}{\K}
					 + \Norm{\PP\DDer\QQ(\TT{f} - \TT{I}^m\TT{f})^e}{\K} \\
			&\leqC h^{m+1}\HNormTan[m+1]{\TT{f}}{\K}
									+ \Norm{\DDer(\TT{f} - \TT{I}^m\TT{f})^e}{\K}
									+ \LInfNorm{\Weingarten}{\K} \Norm{\QQ(\TT{f} - \TT{I}^m\TT{f})}{\K} \\
			&\leqC \left(h^{m+1} + h^{m}\right)\HNormTan[m+1]{\TT{f}}{\K}
	\end{align*}
	\else
	\begin{multline*}
		\HNormTan[1]{\PP(\TT{f} - \TT{I}^m\TT{f})}{\K}
			\leq \Norm{\PP(\TT{f} - \TT{I}^m\TT{f})}{\K} + \Norm{\CDer\PP(\TT{f} - \TT{I}^m\TT{f})}{\K} \\
			\begin{aligned}
			&\leqC \Norm{\TT{f} - \TT{I}^m\TT{f}}{\K}
					 + \Norm{\PP\DDer(\TT{f} - \TT{I}^m\TT{f})^e}{\K}
					 + \Norm{\PP\DDer\QQ(\TT{f} - \TT{I}^m\TT{f})^e}{\K} \\
			&\leqC h^{m+1}\HNormTan[m+1]{\TT{f}}{\K}
									+ \Norm{\DDer(\TT{f} - \TT{I}^m\TT{f})^e}{\K}
									+ \LInfNorm{\Weingarten}{\K} \Norm{\QQ(\TT{f} - \TT{I}^m\TT{f})}{\K} \\
			&\leqC \left(h^{m+1} + h^{m}\right)\HNormTan[m+1]{\TT{f}}{\K}
		\end{aligned}
	\end{multline*}
	\fi
using \eqref{eq:interpolation_tan_L2}.

	For the third estimate we use $L^2$-norm equivalence, the discrete $\PPh\DDer\QQh$ estimate \eqref{eq:pdq-disc}\EN{}{,} and similar arguments as before, to obtain
	\if\isimastyle1
	\begin{align*}
		\EnergyNorm{\TT{f}^e - \TT{I}_h^m\TT{f}^e}{A_h}
			&\leqC \Norm{\ChDer\PPh(\TT{f} - \TT{I}^m\TT{f})^e}{\Kh}\!+ \Norm{\PPh(\TT{f} - \TT{I}^m\TT{f})^e}{\Kh}\!+ h^{-\alpha}\Norm{\QQhTilde(\TT{f} - \TT{I}^m\TT{f})^e}{\Kh} \\
			&\leqC \Norm{\PPh\DDer(\TT{f} - \TT{I}^m\TT{f})^e}{\Kh}
										+ \Norm{\QQh(\TT{f} - \TT{I}^m\TT{f})^e}{\Kh}
				+ \Norm{(\TT{f} - \TT{I}^m\TT{f})^e}{\Kh} + h^{-\alpha}\Norm{(\TT{f} - \TT{I}^m\TT{f})^e}{\Kh} \\
			&\leqC \big(h^{m} + h^{m+1} + h^{m+1-\alpha}\big)\HNormTan[m+1]{\TT{f}}{\K}\,.
	\end{align*}
	\else
	\begin{multline*}
		\EnergyNorm{\TT{f}^e - \TT{I}_h^m\TT{f}^e}{A_h}\\
		\begin{aligned}
			&\leqC \Norm{\ChDer\PPh(\TT{f} - \TT{I}^m\TT{f})^e}{\Kh}\!+ \Norm{\PPh(\TT{f} - \TT{I}^m\TT{f})^e}{\Kh}\!+ h^{-\alpha}\Norm{\QQhTilde(\TT{f} - \TT{I}^m\TT{f})^e}{\Kh} \\
			&\leqC \Norm{\PPh\DDer(\TT{f} - \TT{I}^m\TT{f})^e}{\Kh}
										+ \Norm{\QQh(\TT{f} - \TT{I}^m\TT{f})^e}{\Kh}\\
			&\qquad\qquad	+ \Norm{(\TT{f} - \TT{I}^m\TT{f})^e}{\Kh} + h^{-\alpha}\Norm{(\TT{f} - \TT{I}^m\TT{f})^e}{\Kh} \\
			&\leqC \big(h^{m} + h^{m+1} + h^{m+1-\alpha}\big)\HNormTan[m+1]{\TT{f}}{\K}\,.
		\end{aligned}
	\end{multline*}
	\fi
With $0\leq\alpha\leq 1$ the assertion follows.

	The last estimate for the norm $\GNorm{\cdot}$, introduced in \cref{def:GNorm}, follows the argumentation as before for the norm $\EnergyNorm{\cdot}{A_h}$, with $\alpha=1$ and $\QQhTilde$ replaced by $\QQh$. The additional derivative term is already contained in the estimate of the other terms.
\end{proof}

\begin{lemma}[Inverse estimates]\label{lem:inverse-estimates}
	Let $\TT{v}_h\in\Vh^{\ku}$  and $0\leq l \leq m\leq \ku+1$ then
	\begin{align*}
		\HNormAmb[m]{\TT{v}_h^e}{\K}\leqC h^{-(m-l)} \HNormAmb[l]{\TT{v}_h^e}{\K}.
	\end{align*}
\end{lemma}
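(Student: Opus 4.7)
The plan is to reduce this to the classical componentwise scalar inverse estimate on the piecewise-flat reference triangulation $\hat{\G}_h$, and then transfer the resulting bound from $\hat{\G}_h$ through $\Gh$ to $\G$ using the smoothness and uniform boundedness of the intervening mappings.

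First, I would pull the tensor field $\TT{v}_h^e$ back to the reference triangulation via the parametrization $\pi_{h}\colon \hat{\G}_h\to\Gh$ composed with the lift $\pi|_{\Gh}^{-1}\colon\Gh\to\G$. Each scalar component of this pullback lies in $\hat{V}_h^{\ku}$ on each $\hat{K}\in\hat{\K}_h$, so the standard local (flat) scalar inverse inequality applies elementwise, giving for every component $v$ a bound of the form $\HalbNorm{v}{H^m(\hat{K})}\leqC h^{-(m-l)}\HalbNorm{v}{H^l(\hat{K})}$ on each reference element; summing over elements and components yields the desired inverse estimate on $\hat{\G}_h$.

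Next I would transport this inequality to $\G$. The change of variables from $\hat{\G}_h$ to $\Gh$ is controlled by \cref{lem:pi_h-pi_hinv}, which provides uniform $C^\infty$-bounds for $\pi_h$ and $\pi_h^{-1}$. The further change of variables from $\Gh$ to $\G$ is governed by $\pi$ and $\pi|_{\Gh}^{-1}$: the smoothness of $\G$ gives uniform bounds for all derivatives of $\pi$, while the estimates of $B$, $B^{-1}$ and $\det B$ from \cref{lem:B}, together with the Weingarten bounds, control all derivatives of $\pi|_{\Gh}^{-1}$ up to the required order (here only $m\leq \ku+1$ derivatives are needed, so no difficulty arises from differentiability of $\pi_h^{-1}$ beyond what is already established). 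Together with \cref{lem:l2-norm-equiv} for the $L^2$-equivalence of norms between $\K$ and $\Kh$, repeated application of the chain rule and the product rule shows that derivatives up to order $m$ of $\TT{v}_h^e$ on $\K$ are equivalent (up to $h$-independent constants) to derivatives up to order $m$ of the reference-element pullback on $\hat{\K}_h$, and likewise for order $l$.

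The main obstacle is the bookkeeping for higher-order derivatives: one must verify that the constants arising from the chain rule expansion of $\DDer_\G^m(\TT{v}_h\circ \pi_h^{-1}\circ(\pi|_{\Gh})^{-1})$ are bounded independently of $h$. This follows, however, from the uniform $C^\infty$-bounds already available on $\pi$, $\pi_h$ and their inverses, and from the fact that mixed lower-order derivative terms produced by the chain rule are dominated by the $H^l$-norm side of the inequality (or can be absorbed using the intermediate norms $\HNormAmb[j]{\TT{v}_h^e}{\K}$ for $l\leq j\leq m$, which themselves obey the same scaling by induction on $m-l$). This completes the reduction and yields the claim.
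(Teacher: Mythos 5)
Your proposal is correct and follows essentially the same route as the paper: the authors likewise reduce to componentwise scalar inverse estimates on the reference elements and transfer the bound through the parametrizations using the uniform boundedness of $\pi_h$, $\pi_h^{-1}$ (\cref{lem:pi_h-pi_hinv}) and of $\pi$ and its restriction-inverse. The paper's proof is a two-line citation of exactly these ingredients; your sketch just makes the chain-rule bookkeeping explicit.
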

\begin{proof}
	Similar to the proof of interpolation estimates, the assertion follows directly from the inverse estimates of the componentwise scalar interpolation operator and the boundedness of the piecewise polynomial surface \EN{parametrisation}{parametrization} and its inverse, see \cref{lem:pi_h-pi_hinv}.
\end{proof}

\begin{lemma}\label{lem:inverse-pp-pph}
	Let $\TT{v}_h\in\Vh^{\ku}$ then
	\begin{align*}
		\Norm{\DDer(\PPh-\PP)\TT{v}_h}{\Kh}\leqC h^{\kg-1}\Norm{\TT{v}_h}{\Kh}.
	\end{align*}
\end{lemma}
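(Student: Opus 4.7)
The plan is to split the derivative of the product $(\PPh-\PP)\TT{v}_h$ via the product rule and estimate each piece separately, using the inverse estimate to absorb one factor of $h$ against the improved order in $\LInfNorm{\PPh-\PP}{}$. More precisely, I would write
\[
	\DDer\bigl((\PPh-\PP)\TT{v}_h\bigr) = \bigl(\DDer(\PPh-\PP)\bigr)\TT{v}_h + (\PPh-\PP)\DDer\TT{v}_h^e
\]
almost everywhere on each $K\in\Kh$, with $\TT{v}_h^e$ the constant-in-normal-direction extension.

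For the first term I would apply \cref{lem:dP} directly to obtain $\LInfNorm{\DDer(\PPh-\PP)}{\Kh}\leqC h^{\kg-1}$, so
\[
	\Norm{\bigl(\DDer(\PPh-\PP)\bigr)\TT{v}_h}{\Kh} \leqC h^{\kg-1}\Norm{\TT{v}_h}{\Kh}.
\]
For the second term, I would combine two ingredients: the pointwise bound $\LInfNorm{\PPh-\PP}{\Kh}\leqC h^{\kg}$ from \cref{lem:P} together with \eqref{eq:n-nh}, and the inverse estimate from \cref{lem:inverse-estimates} applied componentwise to get $\Norm{\DDer\TT{v}_h^e}{\Kh}\leqC h^{-1}\Norm{\TT{v}_h}{\Kh}$. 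This yields
\[
	\Norm{(\PPh-\PP)\DDer\TT{v}_h^e}{\Kh} \leqC h^{\kg}\cdot h^{-1}\Norm{\TT{v}_h}{\Kh} = h^{\kg-1}\Norm{\TT{v}_h}{\Kh}.
\]
Adding the two contributions gives the claim.

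There is no real obstacle here; the only point to be careful about is that each piece contributes at exactly the same order $h^{\kg-1}$: the derivative of the projection loses one power of $h$ intrinsically (\cref{lem:dP}), while the projection difference itself is one order better but that gain is spent on the inverse estimate for $\DDer\TT{v}_h$. Since $\TT{v}_h$ is piecewise polynomial and \cref{lem:inverse-estimates} applies on each element of $\Kh$, summing the elementwise $L^2$ bounds recovers the global estimate.
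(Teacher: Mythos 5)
Your argument is correct and is precisely the paper's proof, which simply states that the result ``follows from \cref{lem:dP} and \cref{lem:inverse-estimates}'': the product-rule split, the $h^{\kg-1}$ bound on $\DDer(\PPh-\PP)$, and the $h^{\kg}\cdot h^{-1}$ balance between \cref{lem:P} and the inverse estimate are exactly the intended steps. Nothing is missing.
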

\begin{proof}
	This follows from \cref{lem:dP} and \cref{lem:inverse-estimates}.
\end{proof}

\begin{lemma}[Super-approximation]\label{lem:superApproximation}
	Let $\TT{v}\in\HSpaceTan[m]{\K}\cap \TT{C}^0(\G)$. Then for $\ku\geq m$
	\begin{align}\label{eq:super-approx-1}
		\Norm{\QQ \I^{\ku}\TT{v}}{\K}\leqC  h^{m+1} \HNormAmb[m]{\I^{\ku}\TT{v}}{\K},
	\end{align}
	where the constant depends on up to $\ku+1$ order derivatives of $\QQ$.

	For $\TT{v}_h\in \Vh^{\ku}$, $l=0,1$ and $s\leq \ku$ we have
	\begin{align}\label{eq:super-approx-2}
		\Norm{\DDer^{l}(\PP \TT{v}_h^{e} - \I^{\ku} \PP \TT{v}_h^{e})}{\K}\leqC h^{s+1-l} \HNormAmb[s]{\TT{v}_h^{e}}{\K},
	\end{align}
	where the constant depends on up to $\ku+1$ order derivatives of $\PP$. The same estimate holds for $\QQ$ instead of $\PP$.
\end{lemma}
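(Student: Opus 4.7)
The plan is to reduce both estimates to one super-approximation argument: apply the maximal-order Lagrange interpolation error estimate and then trade high-order derivatives of the finite-element factor for negative powers of $h$ via the Leibniz rule together with the inverse estimates of Lemma~\ref{lem:inverse-estimates}. The one ingredient unique to the first estimate is that tangentiality of $\TT{v}$ makes $\QQ\I^{\ku}\TT{v}$ vanish at every Lagrange node, so that $\I^{\ku}$ annihilates it.

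For \eqref{eq:super-approx-1}, I first argue that since $\TT{v}$ is tangential, $\QQ\TT{v}\equiv 0$, and at every Lagrange node $x_i$ the nodal exactness of Lagrange interpolation gives $(\I^{\ku}\TT{v})(x_i)=\TT{v}(x_i)$, so $(\QQ\I^{\ku}\TT{v})(x_i)=\QQ(x_i)\TT{v}(x_i)=0$. Hence $\I^{\ku}(\QQ\I^{\ku}\TT{v})=0$ elementwise and, on each $K\in\K$,
\[
   \QQ\I^{\ku}\TT{v} \;=\; (\Id-\I^{\ku})\bigl(\QQ\I^{\ku}\TT{v}\bigr).
\]
Applying the classical Lagrange interpolation error estimate at the maximal order $\ku+1$ gives $\Norm{\QQ\I^{\ku}\TT{v}}{K}\leqC h^{\ku+1}\HalbNorm{\QQ\I^{\ku}\TT{v}}{\HSpaceAmb[\ku+1]{K}}$. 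I then expand $\DDer^{\ku+1}(\QQ\I^{\ku}\TT{v})$ by Leibniz into a sum over $j=0,\dots,\ku+1$ of $\DDer^j\QQ\otimes\DDer^{\ku+1-j}\I^{\ku}\TT{v}$. Carried out in reference coordinates, the $j=0$ term vanishes because $\I^{\ku}\TT{v}$ pulls back to a polynomial of degree $\leq\ku$; curvature corrections arising from the chart $\pi_h$ are controlled uniformly in $h$ by Lemma~\ref{lem:pi_h-pi_hinv}. For $j\geq 1$, $\LInfNorm{\DDer^j\QQ}{K}$ is bounded by a constant depending on up to $\ku+1$ derivatives of $\QQ$, and $\DDer^{\ku+1-j}\I^{\ku}\TT{v}$ is controlled by the inverse estimate $\Norm{\DDer^{\ku+1-j}\I^{\ku}\TT{v}}{K}\leqC h^{m-(\ku+1-j)}\HNormAmb[m]{\I^{\ku}\TT{v}}{K}$ when $\ku+1-j>m$, and directly by $\HNormAmb[m]{\I^{\ku}\TT{v}}{K}$ otherwise. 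The dominant term is $j=1$, producing $h^{\ku+1}\cdot h^{m-\ku}=h^{m+1}$, and summation over elements yields the stated rate.

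For \eqref{eq:super-approx-2}, the nodal-vanishing trick is not needed: I apply directly
\[
   \Norm{\DDer^l(\PP\TT{v}_h^e - \I^{\ku}\PP\TT{v}_h^e)}{K}\leqC h^{\ku+1-l}\HalbNorm{\PP\TT{v}_h^e}{\HSpaceAmb[\ku+1]{K}}
\]
and repeat the Leibniz-plus-inverse-estimates argument with $\PP$ in the role of $\QQ$ and the finite-element function $\TT{v}_h^e$ in the role of $\I^{\ku}\TT{v}$. The smallest exponent, $h^{\ku+1-l}\cdot h^{s-\ku}=h^{s+1-l}$, is again attained at $j=1$. Since the argument treats $\PP$ and $\QQ$ symmetrically, the analogous statement with $\QQ$ replacing $\PP$ follows without modification.

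The main obstacle is the careful treatment of the curvature of the discrete elements: strictly speaking $\I^{\ku}\TT{v}$ and $\TT{v}_h^e$ are polynomials only in reference coordinates, so $\DDer^{\ku+1}$ acquires nonzero chain-rule contributions from derivatives of $\pi_h$ when computed in physical coordinates. The Leibniz expansion and all inverse-estimate reductions must therefore be performed in reference coordinates or transported between the two using Lemma~\ref{lem:pi_h-pi_hinv}, and the bookkeeping must simultaneously track the claimed dependence on up to $\ku+1$ derivatives of $\QQ$ (respectively $\PP$).
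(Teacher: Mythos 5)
Your proposal is correct and follows essentially the same route as the paper's proof: the nodal-vanishing identity $\I^{\ku}\QQ\I^{\ku}\TT{v}=0$, the maximal-order interpolation estimate, a product-rule bound involving up to $\ku+1$ derivatives of $\QQ$ (resp.\ $\PP$), the vanishing of the top-order derivative of the piecewise polynomial, and inverse estimates down to order $m$ (resp.\ $s$). Your explicit Leibniz bookkeeping and the remark about chain-rule contributions from the curved parametrization are just a more careful spelling-out of the same steps the paper compresses into ``$\leqC\Norm{\QQ}{C^{\ku+1}}\HNorm[\ku+1]{(\I^{\ku}\TT{v})^{e}}{K}$, highest derivatives vanish, inverse estimates.''
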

\begin{proof}
	As $\QQ \I^{\ku}\TT{v}$ is continuous, interpolation is well-defined, we have
	\[\I^{\ku} \QQ \I^{\ku}\TT{v}=0\]
	as the function vanishes at every Lagrange node. $\QQ \I^{\ku}\TT{v}$ is not globally smooth, but on every element $K\in \mathcal{K}$ we can estimate
	\begin{align*}
		\Norm{\QQ \I^{\ku}\TT{v}}{K}
		&= \Norm{\QQ \I^{\ku}\TT{v} - \I^{\ku} \QQ \I^{\ku}\TT{v}}{K}\\
		&\leqC h^{\ku+1} \HNorm[\ku+1]{\QQ (\I^{\ku}\TT{v})^{e}}{K}\\
		&\leqC h^{\ku+1} \Norm{\QQ}{C^{\ku+1}} \HNorm[\ku+1]{(\I^{\ku}\TT{v})^{e}}{K}.
	\end{align*}
	As derivatives of order $\ku+1$ of $(\I^{\ku}\TT{v})^{e}$ vanish we can use inverse estimates to obtain
	\begin{align*}
		\Norm{\QQ \Ih^{\ku}\TT{v}}{K}
		&\leqC  h^{\ku+1} \HNorm[\ku]{(\Ih^{\ku}\TT{v})^{e}}{K}\\
		&\leqC  h^{m+1} \HNorm[m]{(\Ih^{\ku}\TT{v})^{e}}{K}.
	\end{align*}
	Summation over the elements yield the first estimate.

	The second estimate follows from a similar argument.
	We have by interpolation, vanishing of the highest order derivatives\EN{}{,} and inverse estimates on each element
	\begin{align*}
		\Norm{\DDer^l(\PP \TT{v}_h^{e} - \I^{\ku} \PP \TT{v}_h^{e})}{K}
		&\leqC h^{\ku+1-l} \HNormAmb[\ku+1]{\PP \TT{v}_h^{e}}{K}\\
		&\leqC h^{\ku+1-l} \Norm{\PP}{C^{\ku+1}} \HNormAmb[\ku+1]{\TT{v}_h^{e}}{K}\\
		&\leqC h^{\ku+1-l} \HNormAmb[\ku]{\TT{v}_h^{e}}{K}\\
		&\leqC h^ {1-l+s} \HNormAmb[s]{\TT{v}_h^{e}}{K}.
	\end{align*}
\end{proof}

We introduce inverse estimates for discrete projections $\PPh\TT{v}_h$ and $\QQh\TT{v}_h$ of discrete tensor fields $\TT{v}_h\in \Vh$ in the following lemma.

\begin{lemma}\label{lem:super-inverse}
	For $\TT{v}_h\in \Vh^{\ku}$, $s\leq l=0,1$, we have
	\begin{align*}
		\Norm{\DDer^{l}\PPh\TT{v}_h^{e}}{\Kh} \leqC h^{-l+s}\HNormAmb[s]{\PPh\TT{v}_h^e}{\K} + h^{-l+1}\Norm{\TT{v}_h}{\Kh}\,.
	\end{align*}
	The same holds for $\QQh$ instead of $\PPh$.
\end{lemma}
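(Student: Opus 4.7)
The case $l=s=0$ is immediate from the $L^2$-norm equivalence of \cref{lem:l2-norm-equiv} (applied to the field $\PPh\TT{v}_h^e$, whose lift to $\G$ is its restriction to $\G$), so the substantive content is the case $l=1$. For this case, the plan is to isolate the geometric discrepancy between the continuous and discrete tensor projections and then apply a standard inverse estimate to a Lagrange-interpolated piecewise polynomial. Concretely, I write
\[
  \PPh\TT{v}_h^e = \I^{\ku}\PP\TT{v}_h^e + (\PP-\I^{\ku}\PP)\TT{v}_h^e + (\PPh-\PP)\TT{v}_h^e
\]
and estimate $\DDer$ of each of the three summands in $\LSpace[2]{\Kh}$ separately.

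The correction term $\DDer(\PPh-\PP)\TT{v}_h^e$ is handled by the product rule: if the derivative lands on the projection factor, \cref{lem:dP} contributes $h^{\kg-1}$, and if it lands on $\TT{v}_h^e$, combining \cref{lem:P} with the inverse estimate of \cref{lem:inverse-estimates} recovers the same order $h^{\kg-1}$. Since $\kg\geq 1$, both are absorbed in the second RHS term $h^{1-l}\Norm{\TT{v}_h}{\Kh}$. The super-approximation remainder $\DDer(\PP-\I^{\ku}\PP)\TT{v}_h^e$ is bounded by $h^{s}\HNormAmb[s]{\TT{v}_h^e}{\K}$ via \eqref{eq:super-approx-2}; decomposing $\TT{v}_h^e=\PPh\TT{v}_h^e+\QQh\TT{v}_h^e$ and using a product-rule/inverse-estimate argument yields $\HNormAmb[s]{\QQh\TT{v}_h^e}{\K}\leqC h^{-s}\Norm{\TT{v}_h}{\Kh}$ for $s\in\{0,1\}$, so this piece reduces to $h^{s-1}\HNormAmb[s]{\PPh\TT{v}_h^e}{\K}+\Norm{\TT{v}_h}{\Kh}$ after bounding $h^{s}\leq h^{s-1}$. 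Finally, $\I^{\ku}\PP\TT{v}_h^e$ is the lift of a function in $\Vh^{\ku}$, so \cref{lem:inverse-estimates} delivers $\Norm{\DDer\I^{\ku}\PP\TT{v}_h^e}{\Kh}\leqC h^{s-1}\HNormAmb[s]{\I^{\ku}\PP\TT{v}_h^e}{\K}$; a second application of super-approximation (to replace $\I^{\ku}\PP\TT{v}_h^e$ by $\PP\TT{v}_h^e$) followed by \cref{lem:P} (to swap $\PP$ for $\PPh$) produces the desired $h^{s-1}\HNormAmb[s]{\PPh\TT{v}_h^e}{\K}+\Norm{\TT{v}_h}{\Kh}$.

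The $\QQh$-version follows by the identical argument with $\QQ,\QQh$ in place of $\PP,\PPh$, using that \cref{lem:superApproximation} explicitly provides the $\QQ$-variant of super-approximation and that \cref{lem:P,lem:dP} are stated symmetrically in the two projections. The main obstacle is bookkeeping: at every swap between $\PP$ and $\PPh$ (or between norms on $\K$ and $\Kh$) one picks up a residue of order $h^{\kg-1}$ or $h^{\kg}$, and one must check at each step that combining this residue with the appropriate inverse estimate keeps it inside the tolerance $h^{1-l}\Norm{\TT{v}_h}{\Kh}$ for all admissible $(l,s)\in\{(0,0),(1,0),(1,1)\}$. Since $\kg\geq 1$, the inequalities close, but the match between the super-approximation order $h^{s+1-l}$ and the admissible error $h^{1-l}$ is tight and has to be tracked carefully.
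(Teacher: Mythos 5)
Your proposal is correct and follows essentially the same route as the paper: the identical three-term decomposition into $\Ih^{\ku}\PP\TT{v}_h^{e}$, the super-approximation remainder $(\PP-\I^{\ku}\PP)\TT{v}_h^{e}$, and the projection discrepancy $(\PPh-\PP)\TT{v}_h^{e}$, treated with the same lemmas (super-approximation, inverse estimates, and the estimates for $\PP-\PPh$ and its derivative). The only cosmetic differences are that the paper applies \eqref{eq:super-approx-2} with $s=0$ to dispatch the middle term directly, avoiding your detour through $\HNormAmb[s]{\QQh\TT{v}_h^e}{\K}$, and that the final swap $\HNormAmb[s]{\PP\TT{v}_h}{\Kh}\to\HNormAmb[s]{\PPh\TT{v}_h}{\Kh}$ for $s=1$ needs \cref{lem:inverse-pp-pph} (the derivative version) rather than \cref{lem:P} alone — a tool you already invoke elsewhere in the argument.
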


\begin{proof}
	We write
	\begin{align*}
		\Norm{\DDer^l\PPh\TT{v}_h^{e}}{\Kh}\leq
		\Norm{\DDer^l(\PPh-\PP)\TT{v}_h^{e}}{\Kh}
		+ \Norm{\DDer^l\left(\PP\TT{v}_h^{e} - \Ih^{\ku}\PP\TT{v}_h^{e}\right)}{\Kh}
		+ \Norm{\DDer^l\Ih^{\ku}\PP\TT{v}_h^{e}}{\Kh}\,.
	\end{align*}
	The first term can be estimated by \cref{lem:P} for $l=0$ or \cref{lem:inverse-pp-pph} for $l=1$
	\begin{align*}
		\Norm{\DDer^l(\PPh-\PP)\TT{v}_h^{e}}{\Kh}
		\leqC h^{\kg-l}\Norm{\TT{v}_h}{\Kh}\leq h^{-l+1} \Norm{\TT{v}_h}{\Kh}\,.
		\end{align*}
	For the second term we use the super-approximation result \eqref{eq:super-approx-2} with $s=0$ to obtain
	\begin{align*}
		\Norm{\DDer^l\left(\PP\TT{v}_h^{e} - \Ih^{\ku}\PP\TT{v}_h^{e}\right)}{\Kh}
		\leqC h^{-l+1} \Norm{\TT{v}_h}{\Kh}\,.
	\end{align*}
	For the last term, we use inverse estimates and also super-approximation \eqref{eq:super-approx-2}
	\begin{align*}
		\Norm{\DDer^l\Ih^{\ku}\PP\TT{v}_h^{e}}{\Kh}
		&\leqC h^{-(l-s)}\HNormAmb[s]{\Ih^{\ku}\PP\TT{v}_h}{\Kh}\\
		&\leqC h^{-(l-s)}\left(\HNormAmb[s]{\PP\TT{v}_h}{\Kh} + \HNormAmb[s]{\Ih^{\ku}\PP\TT{v}_h - \PP\TT{v}_h}{\Kh}\right) \\
		&\leqC h^{-(l-s)}\left(\HNormAmb[s]{\PP\TT{v}_h}{\Kh} + h^{-s+1} \Norm{\TT{v}_h}{\Kh}\right) \\
		&\leqC h^{-(l-s)}\HNormAmb[s]{\PP\TT{v}_h}{\Kh}+h^{-l+1} \Norm{\TT{v}_h}{\Kh}\,.
	\end{align*}
	By \cref{lem:inverse-pp-pph} we get the assertion
	\begin{align*}
		\HNormAmb[s]{\PP\TT{v}_h}{\Kh}
		&\leqC \HNormAmb[s]{\PPh\TT{v}_h}{\Kh} +\HNormAmb[s]{(\PPh-\PP)\TT{v}_h}{\Kh}\\
		&\leqC \HNormAmb[s]{\PPh\TT{v}_h}{\Kh} +h^{\kg-s}\Norm{\TT{v}_h}{\Kh}.
	\end{align*}
\end{proof}

The following lemma allows us to estimate the norm $\GNorm{\cdot}$ introduced in \cref{def:GNorm} of discrete tensor fields in terms of the discrete energy norm.

\begin{lemma}\label{lem:disc-h1-bounds}
	For $\TT{v}_h\in \LSpace{\Gh}$, $h\in(0,h_{0}]$ with $h_{0}$ small enough, we have
	\begin{align}
		\label{eq:disc-h1-bounds-1}
		\Norm{\QQh\TT{v}_h}{\Kh}
		&\leqC h^{\kp}\Norm{\PPh\TT{v}_h}{\Kh}+\;h^{\alpha}\EnergyNorm{\TT{v}_h}{s_h}.
	\end{align}
	If $\TT{v}_h\in\Vh$, then
	\begin{align}
		\label{eq:disc-h1-bounds-2}
		\GNorm{\TT{v}_h}&\leqC \EnergyNorm{\TT{v}_h}{a_h} + h^{-1}\Norm{\QQh\TT{v}_h}{\Kh} \leqC \EnergyNorm{\TT{v}_h}{a_h} + h^{-1+\alpha}\EnergyNorm{\TT{v}_h}{s_h}.
	\end{align}
\end{lemma}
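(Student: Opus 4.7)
For the bound~\eqref{eq:disc-h1-bounds-1}, the strategy is to compare the two normal projections $\QQh$ and $\QQhTilde$ via the elementary identity $\PPhTilde - \PPh = \QQh - \QQhTilde$. I would start from $\Norm{\QQh \TT{v}_h}{\Kh}^{2} = \Inner{\QQh \TT{v}_h}{\TT{v}_h}{\Kh}$ and split the second argument as $\TT{v}_h = \PPhTilde \TT{v}_h + \QQhTilde \TT{v}_h$. The piece $\Inner{\QQh \TT{v}_h}{\QQhTilde \TT{v}_h}{\Kh}$ is handled by Cauchy--Schwarz and directly yields the $\Norm{\QQhTilde \TT{v}_h}{\Kh} = \beta^{-1/2} h^{\alpha}\EnergyNorm{\TT{v}_h}{s_h}$ contribution. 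The remaining piece $\Inner{\QQh \TT{v}_h}{\PPhTilde \TT{v}_h}{\Kh}$ I would rewrite, exploiting $\PPh \QQh = 0$, as $\Inner{(\PPhTilde-\PPh)\QQh \TT{v}_h}{\TT{v}_h}{\Kh} = \Inner{(\QQh-\QQhTilde)\QQh \TT{v}_h}{\TT{v}_h}{\Kh}$, and control it by $\LInfNorm{\QQh - \QQhTilde}{\Kh} \Norm{\QQh \TT{v}_h}{\Kh}\Norm{\TT{v}_h}{\Kh}$, using a projection-difference bound analogous to \cref{lem:P} but built from $\nn - \nnhTilde$. Splitting $\Norm{\TT{v}_h}{\Kh} \leq \Norm{\PPh \TT{v}_h}{\Kh} + \Norm{\QQh \TT{v}_h}{\Kh}$, dividing by $\Norm{\QQh \TT{v}_h}{\Kh}$, and absorbing the resulting $h^{\kp}\Norm{\QQh \TT{v}_h}{\Kh}$ for $h\leq h_0$ small enough yields~\eqref{eq:disc-h1-bounds-1}.

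For the first inequality of~\eqref{eq:disc-h1-bounds-2} I would examine the four contributions to $\GNorm{\TT{v}_h}$ one by one. The terms $\Norm{\ChDer\PPh \TT{v}_h}{\Kh}$ and $\Norm{\PPh \TT{v}_h}{\Kh}$ are bounded immediately by $\EnergyNorm{\TT{v}_h}{a_h}$, and $h^{-1}\Norm{\QQh \TT{v}_h}{\Kh}$ is present on the right already. The only nontrivial term is $\Norm{\DDer \TT{v}_h^e}{\Kh}$. I would split $\TT{v}_h = \PPh \TT{v}_h + \QQh \TT{v}_h$ and differentiate: for $\DDer(\PPh \TT{v}_h)$, the tangential-to-$\Gh$ part equals $\ChDer \PPh \TT{v}_h$, and the normal part $\QQh\DDer\PPh \TT{v}_h$ is controlled by \cref{rem:pdq_h} via the bounded Weingarten map, giving $\Norm{\DDer(\PPh \TT{v}_h)}{\Kh}\leqC \EnergyNorm{\TT{v}_h}{a_h}$. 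For $\DDer(\QQh \TT{v}_h)$, I invoke the discrete inverse-type estimate \cref{lem:super-inverse} (applied to $\QQh$ rather than $\PPh$, with $l=1$, $s=0$) to obtain $\Norm{\DDer(\QQh \TT{v}_h)}{\Kh}\leqC h^{-1}\Norm{\QQh \TT{v}_h}{\Kh} + \Norm{\TT{v}_h}{\Kh}$, and conclude using $\Norm{\TT{v}_h}{\Kh} \leq \Norm{\PPh \TT{v}_h}{\Kh} + \Norm{\QQh \TT{v}_h}{\Kh} \leqC \EnergyNorm{\TT{v}_h}{a_h} + \Norm{\QQh \TT{v}_h}{\Kh}$.

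The second inequality of~\eqref{eq:disc-h1-bounds-2} is then immediate by plugging~\eqref{eq:disc-h1-bounds-1} into $h^{-1}\Norm{\QQh \TT{v}_h}{\Kh}$, which produces $h^{\kp - 1}\Norm{\PPh \TT{v}_h}{\Kh} + h^{\alpha - 1}\EnergyNorm{\TT{v}_h}{s_h}$; since $\kp \geq 1$ and $h\leq h_0$, the factor $h^{\kp - 1}$ is bounded, and $\Norm{\PPh \TT{v}_h}{\Kh} \leq \EnergyNorm{\TT{v}_h}{a_h}$, so the first summand is absorbed into the $\EnergyNorm{\TT{v}_h}{a_h}$ contribution. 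The main technical subtlety is to obtain the sharp $h^{\kp}$ power in~\eqref{eq:disc-h1-bounds-1} rather than the cruder $h^{\kg}$ that a naïve triangle inequality $\LInfNorm{\QQh-\QQhTilde}{\Kh} \leq \LInfNorm{\QQh-\QQ}{\Kh} + \LInfNorm{\QQ-\QQhTilde}{\Kh}$ would give; one must exploit the rewriting $\PPhTilde\QQh = (\QQh - \QQhTilde)\QQh$ so that only the $\nnhTilde$-based projection difference survives the pairing, producing the genuine $\kp$-order bound. For the tensor case the corresponding pointwise estimate is obtained by the same induction-over-rank argument as in the proof of \cref{lem:P}.
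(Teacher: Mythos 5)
Your overall strategy coincides with the paper's. For \eqref{eq:disc-h1-bounds-2} your term-by-term treatment (the $a_h$-terms directly, $\Norm{\DDer\TT{v}_h^e}{\Kh}$ split into the $\PPh$- and $\QQh$-parts handled via \eqref{eq:qdp-disc} and \cref{lem:super-inverse} with $l=1$, $s=0$) is exactly the argument in the paper, and your chaining for the second inequality of \eqref{eq:disc-h1-bounds-2} is correct. For \eqref{eq:disc-h1-bounds-1} the paper simply writes $\Norm{\QQh\TT{v}_h}{\Kh}\leq\Norm{\QQhTilde\TT{v}_h}{\Kh}+\Norm{(\QQh-\QQhTilde)\TT{v}_h}{\Kh}$ and then absorbs the $\QQh$-part of $\Norm{\TT{v}_h}{\Kh}$ for $h$ small; your inner-product version with the splitting $\TT{v}_h=\PPhTilde\TT{v}_h+\QQhTilde\TT{v}_h$ is a mild variant of the same idea and is equally valid.

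The one point I would push back on is the claim in your last paragraph that the rewriting $\PPhTilde\QQh=(\QQh-\QQhTilde)\QQh$ "produces the genuine $\kp$-order bound". It does not: the operator $\QQh-\QQhTilde=\PPhTilde-\PPh$ is controlled, via the \cref{lem:P}-type pointwise estimate, by $\FNorm{\nnh-\nnhTilde}\leq\FNorm{\nnh-\nn}+\FNorm{\nn-\nnhTilde}\leqC h^{\kg}+h^{\kp}$, and since $\nnh$ approximates the exact normal only to order $\kg$ this is $O(h^{\min\{\kg,\kp\}})=O(h^{\kg})$; the extra factor $\QQh$ on the right does not remove the $\nnh-\nn$ contribution (for vectors, $(\QQh-\QQhTilde)\QQh$ reduces to $(\PhTilde\nnh)\otimes\nnh$ with $\FNorm{\PhTilde\nnh}=\FNorm{\PhTilde(\nnh-\nnhTilde)}\leqC h^{\kg}$). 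So your argument, like the naive triangle inequality, yields $h^{\kg}\Norm{\PPh\TT{v}_h}{\Kh}$ rather than $h^{\kp}\Norm{\PPh\TT{v}_h}{\Kh}$ in \eqref{eq:disc-h1-bounds-1}. To be fair, the paper's own proof asserts $\Norm{(\QQh-\QQhTilde)\TT{v}_h}{\Kh}\leq h^{\kp}\Norm{\TT{v}_h}{\Kh}$ at the corresponding step without further justification, so you are in the same position as the authors; moreover the weaker exponent $\kg$ is harmless in every subsequent application of the lemma (only $\kg\geq 1\geq\alpha$ enters downstream, and in the isogeometric case $\nnhTilde=\nnh$ the difference term vanishes identically). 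But you should not present the rewriting as the mechanism that upgrades $\kg$ to $\kp$ --- it is not.
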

\begin{proof}
	For the first estimate, we introduce the normal projection $\QQhTilde$ and get
	\begin{align*}
		\Norm{\QQh\TT{v}_h}{\Kh}
		&\leq \Norm{\QQhTilde\TT{v}_h}{\Kh} + \Norm{\QQh\TT{v}_h-\QQhTilde\TT{v}_h }{\Kh} \\
		&\leq \Norm{\QQhTilde\TT{v}_h}{\Kh} + h^{\kp}\Norm{\TT{v}_h }{\Kh} \\
		&\leq \Norm{\QQhTilde\TT{v}_h}{\Kh} +h^{\kp}\Norm{\PPh\TT{v}_h }{\Kh}+ h^{\kp}\Norm{\QQh\TT{v}_h }{\Kh}\,.
\intertext{Thus, for $h\leq h_0$ small enough}
\Norm{\QQh\TT{v}_h}{\Kh}
		&\leqC h^{\kp}\Norm{\PPh\TT{v}_h }{\Kh} + \Norm{\QQhTilde\TT{v}_h}{\Kh}\,.
	\end{align*}
	For the second estimate, we consider only discrete maps $\TT{v}_h\in \Vh$. We note that it is enough to estimate $\Norm{\DDer\TT{v}_h^{e}}{\Kh}$.
	This can be done by first splitting $\TT{v}_h$ into tangential and normal parts
	\begin{align*}
		\Norm{\DDer\TT{v}_h^{e}}{\Kh}&\leq\Norm{\DDer\PPh\TT{v}_h^{e}}{\Kh} + \Norm{\DDer\QQh\TT{v}_h^{e}}{\Kh}\,.
	\end{align*}
	Using \eqref{eq:qdp-disc} we then obtain for the first term
	\begin{align*}
		\Norm{\DDer\PPh \TT{v}_h^{e}}{\Kh}&= \Norm{\ChDer\PPh \TT{v}_h^{e}}{\Kh} + \Norm{\QQh\DDer\PPh \TT{v}_h^{e}}{\Kh}\\
		&\leq \Norm{\ChDer\PPh \TT{v}_h}{\Kh} +C\;\Norm{\PPh\TT{v}_h}{\Kh}\\
		&\leqC \EnergyNorm{\TT{v}_h}{a_h}.
	\end{align*}
	For the second term, we use \cref{lem:super-inverse} with $l=1$, $s=0$\EN{}{,} and $\QQh$ instead of $\PPh$
	\begin{align*}
		\Norm{\DDer\QQh\TT{v}_h^{e}}{\Kh}\leqC
		h^{-1}\Norm{\QQh\TT{v}_h}{\Kh} + \Norm{\TT{v}_h}{\Kh}\,.
	\end{align*}
	Thus, we have
	\begin{align*}
		\Norm{\DDer\TT{v}_h^{e}}{\Kh}&\leqC
		\EnergyNorm{\TT{v}_h}{a_h} + \Norm{\TT{v}_h}{\Kh}
		+ h^{-1}\Norm{\QQh\TT{v}_h}{\Kh}.
	\end{align*}
	Choosing $h$ small enough yields estimate~\eqref{eq:disc-h1-bounds-2}.
\end{proof}

\begin{remark}
	Using \cref{lem:disc-h1-bounds}, we can reformulate the super-approximation result \eqref{eq:super-approx-2} for $s=1$ slightly:
	\begin{align}\label{eq:super-approx-2-modified}
		\HNorm{\PP \TT{v}_h^{e} - \I^{\ku} \PP \TT{v}_h^{e}}{\K}\leqC h \HNorm{\TT{v}_h^{e}}{\K}\leqC h\GNorm{\TT{v}_h} \leqC h\EnergyNorm{\TT{v}_h}{a_h} + \Norm{\QQh\TT{v}_h}{\Kh}.
	\end{align}
\end{remark}

\subsection{Geometric errors}\label{sec:geometric-errors}

There are two geometric error terms, namely the error in the quadratic form and the error in the linear form. The error in the quadratic form again splits into two parts, an $H^1$-part and an $L^2$-part. For non-tangential tensor fields $\TT{v},\TT{w}\in\HSpaceAmb{\K}$ we introduce the forms
\begin{align*}
	\delta_{\nabla}(\TT{v},\TT{w})
      &= \Inner{\CDer\PP\TT{v}}{\CDer\PP\TT{w}}{\K}
       - \Inner{\ChDer\PPh\TT{v}^e}{\ChDer\PPh\TT{w}^e}{\Kh}\\
	\delta_{m}(\TT{v},\TT{w})
      &= \Inner{\PP\TT{v}}{\PP\TT{w}}{\K}
       - \Inner{\PPh\TT{v}^e}{\PPh\TT{w}^e}{\Kh}\\
	\delta_{l}(\TT{v})
      &= \Inner{\TT{f}}{\TT{v}}{\K} - \Inner{\TT{f}^e}{\PPh\TT{v}^e}{\Kh}\,.
\end{align*}

Furthermore, we set
\begin{align}\label{eq:geom-error-delta}
	\delta(\TT{v})(\TT{w}) =\delta_{l}(\TT{w}) -\delta_{m}(\TT{v},\TT{w}) - \delta_{\nabla}(\TT{v},\TT{w}).
\end{align}

\begin{lemma}[The Linear Form]\label{lem:geometricQlError}
	For $\TT{v}_h\in \LSpace[2]{\Kh, \TensorBundleAmb}$, we have
	\begin{align*}
    \Abs{\delta_{l}(\TT{v}_h^\ell)}
        &\leqC h^{\kg+1}\Norm{\TT{f}}{\K}\left(\Norm{\PPh\TT{v}_h}{\Kh}
          + h^{-1}\Norm{\QQh\TT{v}_h}{\Kh}\right)\,.\end{align*}
\end{lemma}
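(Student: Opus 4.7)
The plan is to split $\delta_l(\TT{v}_h^\ell)$ into (i) a pure $L^2$-equivalence error produced by pushing the integral from $\K$ to $\Kh$, and (ii) a pointwise projection-mismatch term $\Inner{\TT{f}^e}{(\PP-\PPh)\TT{v}_h}{\Kh}$ that can be handled algebraically. Since $\TT{f}$ is tangential, self-adjointness of $\PP$ gives $\Inner{\TT{f}}{\TT{v}_h^\ell}{\K} = \Inner{\TT{f}}{\PP\TT{v}_h^\ell}{\K}$, and adding and subtracting $\Inner{\TT{f}^e}{\PP\TT{v}_h}{\Kh}$ yields
\[
\delta_l(\TT{v}_h^\ell) = \bigl[\Inner{\TT{f}}{\PP\TT{v}_h^\ell}{\K}-\Inner{\TT{f}^e}{\PP\TT{v}_h}{\Kh}\bigr] + \Inner{\TT{f}^e}{(\PP-\PPh)\TT{v}_h}{\Kh}.
\]
The bracketed term is bounded by $C h^{\kg+1}\Norm{\TT{f}}{\K}\Norm{\PP\TT{v}_h}{\Kh}$ via Lemma~\ref{lem:l2-norm-equiv}, and Lemma~\ref{lem:P} converts $\Norm{\PP\TT{v}_h}{\Kh}$ into $\Norm{\PPh\TT{v}_h}{\Kh}+h^{\kg}\Norm{\QQh\TT{v}_h}{\Kh}$, which fits into the right-hand side of the claim.

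The main work is the second term. Using that $\TT{f}^e=\PP\TT{f}^e$ (the extension is constant along normal lines and so is $\PP$), I will split $\TT{v}_h=\PPh\TT{v}_h+\QQh\TT{v}_h$ and handle the two pieces separately. The tangential piece vanishes identically: self-adjointness and idempotence of $\PP$ (valid slot-by-slot for the tensor projection) give
\[
\Inner{\PP\TT{f}^e}{\PP\PPh\TT{v}_h}{\Kh} = \Inner{\PP^2\TT{f}^e}{\PPh\TT{v}_h}{\Kh} = \Inner{\PP\TT{f}^e}{\PPh\TT{v}_h}{\Kh},
\]
so that $\Inner{\PP\TT{f}^e}{(\PP-\PPh)\PPh\TT{v}_h}{\Kh}=\Inner{\PP\TT{f}^e}{\PP\PPh\TT{v}_h-\PPh^2\TT{v}_h}{\Kh}=0$. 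The normal piece collapses via $\PPh\QQh=0$ to $\Inner{\PP\TT{f}^e}{\QQh\TT{v}_h}{\Kh}=\Inner{\QQh\PP\TT{f}^e}{\QQh\TT{v}_h}{\Kh}$, and Cauchy-Schwarz together with the standard estimate $\Norm{\QQh\PP\TT{f}^e}{\Kh}\leqC h^{\kg}\Norm{\PP\TT{f}^e}{\Kh}$ of Lemma~\ref{lem:mixed_projections} yields a bound $Ch^{\kg}\Norm{\TT{f}}{\K}\Norm{\QQh\TT{v}_h}{\Kh}=C h^{\kg+1}\cdot h^{-1}\Norm{\TT{f}}{\K}\Norm{\QQh\TT{v}_h}{\Kh}$, matching the normal contribution in the lemma.

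The main obstacle is that the sharper non-standard estimate~\eqref{eq:very-evil-trick} would cost one extra order in $h$ but demands $\HSpaceAmb{}$-regularity of both arguments; here $\TT{v}_h$ lives only in $\LSpace[2]{\Kh,\TensorBundleAmb}$, so that tool is unavailable. The proof must therefore be arranged so that the tangential-tangential contribution (which would otherwise require the evil trick) drops out by the exact algebraic cancellation above, leaving only a genuinely normal pairing for which the standard first estimate of Lemma~\ref{lem:mixed_projections} is enough.
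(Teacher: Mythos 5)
Your proposal is correct and follows essentially the same route as the paper: both split the error into an $L^2$-equivalence piece of order $h^{\kg+1}$ paired with the tangential part of $\TT{v}_h$ and a projection-mismatch piece reduced to a pairing $\Inner{\QQh\PP\TT{f}^e}{\QQh\TT{v}_h}{\Kh}$ of order $h^{\kg}$, using \cref{lem:l2-norm-equiv} and the first estimate of \cref{lem:mixed_projections}. The only difference is cosmetic bookkeeping — you insert $\PP$ where the paper inserts $\PPh$ and make the tangential–tangential cancellation explicit — and both land on the same bound.
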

\begin{proof}
	As $\TT{f}=\PP\TT{f}$, we have for all $\TT{v}_h\in \LSpace[2]{\Kh, \TensorBundleAmb}$
	\begin{align*}
		\Abs{\delta_{l}(\TT{v}_h^{\ell})}
		& = \Abs{\Inner{\TT{f}}{\TT{v}_h^{\ell}}{\K} - \Inner{\TT{f}^e}{\PPh\TT{v}_h}{\Kh}}\\
		&\leq \Abs{\Inner{\PP\TT{f}}{\QQh\TT{v}_h^{\ell}}{\K}}
		+ \Abs{\Inner{\TT{f}}{\PPh\TT{v}_h^{\ell}}{\K}
		- \Inner{\TT{f}^{e}}{\PPh\TT{v}_h}{\Kh}}\\
		&\leqC\;h^{\kg} \Norm{\QQh\TT{v}_h}{\Kh}\Norm{\TT{f}}{\K}
		+ h^{\kg+1} \Norm{\PPh\TT{v}_h}{\Kh}\Norm{\TT{f}}{\K}\,,
	\end{align*}
	which follows from \cref{lem:l2-norm-equiv} and \cref{lem:mixed_projections}.
\end{proof}

\begin{lemma}[The mass matrix term]\label{lem:geometricQmError}
	For $\TT{v}_h,\TT{w}_h\in\LSpace[2]{\Kh, \TensorBundleAmb}$ (non-tangential)	the following estimate holds for $h<h_0$ small enough:
	\if\isimastyle1
	\begin{align}\label{eq:geometricQmError1}
		\Abs{\delta_{m}(\TT{v}_h^\ell,\TT{w}_h^\ell)}
		&\leqC h^{\kg+1}
			\left(\Norm{\PPh\TT{w}_h}{\Kh} + h^{-1}\Norm{\QQh\TT{w}_h}{\Kh}\right)
		\! \left(\Norm{\PPh\TT{v}_h}{\Kh} + h^{-1}\Norm{\QQh\TT{v}_h}{\Kh}\right).
	\end{align}
	\else
	\begin{multline}\label{eq:geometricQmError1}
		\Abs{\delta_{m}(\TT{v}_h^\ell,\TT{w}_h^\ell)}\\
		\qquad \leqC h^{\kg+1}
			\left(\Norm{\PPh\TT{w}_h}{\Kh} + h^{-1}\Norm{\QQh\TT{w}_h}{\Kh}\right)
		\! \left(\Norm{\PPh\TT{v}_h}{\Kh} + h^{-1}\Norm{\QQh\TT{v}_h}{\Kh}\right).
	\end{multline}
	\fi

	For $\TT{v}\in\LSpace[2]{\K, \TensorBundleTan}$ (tangential) and $\TT{w}_h\in\LSpace[2]{\Kh, \TensorBundleAmb}$ (non-tangential)	the following estimate holds for $h<h_0$ small enough:
	\begin{equation}\label{eq:geometricQmError2}
			\Abs{\delta_{m}(\TT{v},\TT{w}_h^\ell)}
				\leqC h^{\kg+1}\left(\Norm{\PPh\TT{w}_h}{\Kh} + h^{-1}\Norm{\QQh\TT{w}_h}{\Kh}\right)\Norm{\TT{v}}{\K}.
	\end{equation}

	For $\TT{v}, \TT{w}\in\LSpace[2]{\K, \TensorBundleTan}$ (tangential) the following estimate holds for $h<h_0$ small enough:
	\begin{equation}\label{eq:geometricQmError3}
	\Abs{\delta_{m}(\TT{v},\TT{w})}
	\leqC h^{\kg+1}\Norm{\TT{w}}{\K}\Norm{\TT{v}}{\K}.
	\end{equation}
\end{lemma}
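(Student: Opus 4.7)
My plan is to prove \eqref{eq:geometricQmError1} by decomposing both arguments along $\PPh+\QQh$ and handling the four resulting summands, and then to obtain \eqref{eq:geometricQmError2} and \eqref{eq:geometricQmError3} as corollaries.

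For \eqref{eq:geometricQmError1} I write $\TT{v}_h = \PPh\TT{v}_h + \QQh\TT{v}_h$, analogously for $\TT{w}_h$, and use bilinearity of $\delta_{m}$. Because $\PPh\QQh=0$, in three of the four summands the $\Kh$-inner product drops out, and only the $\K$-inner product on the continuous surface has to be controlled. For the tangential-tangential summand, combining $\PP=\Id-\QQ$ on both slots with the self-adjointness and idempotency of $\QQ$ yields
\[
  \Inner{\PP(\PPh\TT{v}_h)^{\ell}}{\PP(\PPh\TT{w}_h)^{\ell}}{\K}
  = \Inner{(\PPh\TT{v}_h)^{\ell}}{(\PPh\TT{w}_h)^{\ell}}{\K}
  - \Inner{\QQ(\PPh\TT{v}_h)^{\ell}}{\QQ(\PPh\TT{w}_h)^{\ell}}{\K}.
\]
The first term on the right differs from $\Inner{\PPh\TT{v}_h}{\PPh\TT{w}_h}{\Kh}$ by at most $h^{\kg+1}\Norm{\PPh\TT{v}_h}{\Kh}\Norm{\PPh\TT{w}_h}{\Kh}$ by the $L^2$-norm equivalence of \cref{lem:l2-norm-equiv}. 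For the second term I use $\QQh\PPh=0$ to write $\QQ\PPh=(\QQ-\QQh)\PPh$, so that the pointwise bound $\FNorm{\QQ-\QQh}=\FNorm{\PP-\PPh}\leqC h^{\kg}$ from \cref{lem:P} gives a contribution of order $h^{2\kg}\Norm{\PPh\TT{v}_h}{\Kh}\Norm{\PPh\TT{w}_h}{\Kh}$, which for $\kg\geq 1$ is absorbed into $h^{\kg+1}\Norm{\PPh\TT{v}_h}{\Kh}\Norm{\PPh\TT{w}_h}{\Kh}$.

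The three remaining summands are handled by the same trick: $\PP\QQh=(\PP-\PPh)\QQh$ (again using $\PPh\QQh=0$) together with \cref{lem:P} gives $\Norm{\PP(\QQh\TT{w}_h)^{\ell}}{\K}\leqC h^{\kg}\Norm{\QQh\TT{w}_h}{\Kh}$ and symmetrically for $\TT{v}_h$. A Cauchy--Schwarz combined with $\Norm{\PP(\PPh\TT{v}_h)^{\ell}}{\K}\leqC\Norm{\PPh\TT{v}_h}{\Kh}$ (from norm equivalence) then produces the tangential-normal contribution of order $h^{\kg}\Norm{\PPh\TT{v}_h}{\Kh}\Norm{\QQh\TT{w}_h}{\Kh}=h^{\kg+1}\Norm{\PPh\TT{v}_h}{\Kh}\cdot h^{-1}\Norm{\QQh\TT{w}_h}{\Kh}$, its symmetric companion, and the normal-normal contribution of order $h^{2\kg}\Norm{\QQh\TT{v}_h}{\Kh}\Norm{\QQh\TT{w}_h}{\Kh}$, all fitting into the factored bound on the right-hand side of \eqref{eq:geometricQmError1}.

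For \eqref{eq:geometricQmError2} I apply \eqref{eq:geometricQmError1} with $\TT{v}_h\colonequals\TT{v}^{e}\vert_{\Gh}$; tangentiality of $\TT{v}$ together with the standard estimate of \cref{lem:mixed_projections} gives $\Norm{\QQh\TT{v}^{e}}{\Kh}\leqC h^{\kg}\Norm{\TT{v}}{\K}$, so that $\Norm{\PPh\TT{v}^{e}}{\Kh}+h^{-1}\Norm{\QQh\TT{v}^{e}}{\Kh}\leqC\Norm{\TT{v}}{\K}$ for $\kg\geq 1$. Estimate \eqref{eq:geometricQmError3} then follows either by applying \eqref{eq:geometricQmError2} with the roles of the arguments swapped, or directly from \cref{lem:l2-norm-equiv} combined with the Pythagoras-type identity $\Inner{\TT{v}^{e}}{\TT{w}^{e}}{\Kh}-\Inner{\PPh\TT{v}^{e}}{\PPh\TT{w}^{e}}{\Kh}=\Inner{\QQh\TT{v}^{e}}{\QQh\TT{w}^{e}}{\Kh}$ and \cref{lem:mixed_projections}. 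The principal obstacle is the tangential-tangential case of \eqref{eq:geometricQmError1}: \cref{lem:P} alone only provides an $h^{\kg}$ factor, and the extra power of $h$ is recovered only through the decomposition $\PP=\Id-\QQ$ above, which shifts the leading geometric factor $h^{\kg+1}$ onto the $L^2$-norm equivalence and leaves the $\QQ$-contribution with a doubled $h^{\kg}\cdot h^{\kg}$ scaling.
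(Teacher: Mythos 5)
Your proposal is correct and follows essentially the same route as the paper: both arguments rest on the $L^2$-norm equivalence of \cref{lem:l2-norm-equiv} for the change of surface measure, the first-order estimate $\FNorm{\PP-\PPh}\leqC h^{\kg}$ together with the orthogonality $\PPh\QQh=0$ to gain the quadratic factor $h^{2\kg}\leq h^{\kg+1}$, and \cref{lem:mixed_projections} to reduce \eqref{eq:geometricQmError2}--\eqref{eq:geometricQmError3} to \eqref{eq:geometricQmError1}. The only difference is cosmetic: you split the arguments as $\PPh+\QQh$ and use $\PP=\Id-\QQ$, whereas the paper splits $\PP=\PPh+(\PP-\PPh)$ inside the discrete inner product; the resulting four terms and their bounds are the same.
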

\begin{proof}
	We write for For $\TT{v}_h,\TT{w}_h\in\LSpace[2]{\Kh, \TensorBundleAmb}$
	\begin{align*}
		\Abs{\delta_{m}(\TT{v}_h^\ell,\TT{w}_h^\ell)}
		 		\leq & \Abs{\Inner{\PP\TT{v}_h^\ell}{\PP\TT{w}_h^\ell}{\K} - \Inner{\PP\TT{v}_h}{\PP\TT{w}_h}{\Kh}}\\
				& + \Abs{\Inner{(\PP-\PPh)\TT{v}_h}{(\PP-\PPh)\TT{w}_h}{\Kh}}\\
				& + \Abs{\Inner{\PPh\TT{v}_h}{(\PP-\PPh)\TT{w}_h}{\Kh}}\\
				& + \Abs{\Inner{(\PP-\PPh)\TT{v}_h}{\PPh\TT{w}_h}{\Kh}}\,.
	\end{align*}
	We use \cref{lem:l2-norm-equiv} to estimate the first term on the right hand side by
	\if\isimastyle1
	\begin{align*}
		\Abs{\Inner{\PP\TT{v}_h^\ell}{\PP\TT{w}_h^\ell}{\K} - \Inner{\PP\TT{v}_h}{\PP\TT{w}_h}{\Kh}}
		&\leqC h^{\kg+1} \Norm{\PP\TT{v}_h}{\Kh}\Norm{\PP\TT{w}_h}{\Kh}\\
		&\leqC h^{\kg+1} \left(\Norm{\PPh\TT{v}_h}{\Kh} + h^{\kg}\Norm{\QQh\TT{v}_h}{\Kh}\right)\left(\Norm{\PPh\TT{w}_h}{\Kh} + h^{\kg}\Norm{\QQh\TT{w}_h}{\Kh}\right).
	\intertext{The second term gives}
		\Abs{\Inner{(\PP-\PPh)\TT{v}_h}{(\PP-\PPh)\TT{w}_h}{\Kh}}
		&\leqC h^{2\kg} \Norm{\TT{v}_h}{\Kh}\Norm{\TT{w}_h}{\Kh}\\
		&\leqC h^{2\kg} \left(\Norm{\PPh\TT{v}_h}{\Kh} + \Norm{\QQh\TT{v}_h}{\Kh}\right)\left(\Norm{\PPh\TT{w}_h}{\Kh} + \Norm{\QQh\TT{w}_h}{\Kh}\right)\,.
	\intertext{For the third term we write}
		\Abs{\Inner{\PPh\TT{v}_h}{(\PP-\PPh)\TT{w}_h}{\Kh}}
		&\leq \Abs{\Inner{\PPh\TT{v}_h}{\PPh(\PP-\PPh)\PPh\TT{w}_h}{\Kh}}
		 + \Abs{\Inner{\PPh\TT{v}_h}{\PPh(\PP-\PPh)\QQh\TT{w}_h}{\Kh}}\\
		&\leqC h^{2\kg} \Norm{\PPh\TT{v}_h}{\Kh}\Norm{\TT{w}_h}{\Kh}
		 + h^{\kg}\Norm{\PPh\TT{v}_h}{\Kh}\Norm{\QQh\TT{w}_h}{\Kh}\\
		&\leqC h^{\kg+1}\Norm{\PPh\TT{v}_h}{\Kh}\left(\Norm{\PPh\TT{w}_h}{\Kh}+h^{-1}\Norm{\QQh\TT{v}_h}{\Kh}\right)\,.
	\end{align*}
	\else
	\begin{align*}
		&	\Abs{\Inner{\PP\TT{v}_h^\ell}{\PP\TT{w}_h^\ell}{\K} - \Inner{\PP\TT{v}_h}{\PP\TT{w}_h}{\Kh}} \\
		&\qquad \leqC h^{\kg+1} \Norm{\PP\TT{v}_h}{\Kh}\Norm{\PP\TT{w}_h}{\Kh}\\
		&\qquad \leqC h^{\kg+1} \left(\Norm{\PPh\TT{v}_h}{\Kh} + h^{\kg}\Norm{\QQh\TT{v}_h}{\Kh}\right)\left(\Norm{\PPh\TT{w}_h}{\Kh} + h^{\kg}\Norm{\QQh\TT{w}_h}{\Kh}\right).
	\intertext{The second term gives}
		&	\Abs{\Inner{(\PP-\PPh)\TT{v}_h}{(\PP-\PPh)\TT{w}_h}{\Kh}}\\
		&\qquad \leqC h^{2\kg} \Norm{\TT{v}_h}{\Kh}\Norm{\TT{w}_h}{\Kh}\\
		&\qquad \leqC h^{2\kg} \left(\Norm{\PPh\TT{v}_h}{\Kh} + \Norm{\QQh\TT{v}_h}{\Kh}\right)\left(\Norm{\PPh\TT{w}_h}{\Kh} + \Norm{\QQh\TT{w}_h}{\Kh}\right)\,.
	\intertext{For the third term we write}
		& \Abs{\Inner{\PPh\TT{v}_h}{(\PP-\PPh)\TT{w}_h}{\Kh}}\\
		&\qquad \leq \Abs{\Inner{\PPh\TT{v}_h}{\PPh(\PP-\PPh)\PPh\TT{w}_h}{\Kh}}
		 + \Abs{\Inner{\PPh\TT{v}_h}{\PPh(\PP-\PPh)\QQh\TT{w}_h}{\Kh}}\\
		&\qquad \leqC h^{2\kg} \Norm{\PPh\TT{v}_h}{\Kh}\Norm{\TT{w}_h}{\Kh}
		 + h^{\kg}\Norm{\PPh\TT{v}_h}{\Kh}\Norm{\QQh\TT{w}_h}{\Kh}\\
		&\qquad \leqC h^{\kg+1}\Norm{\PPh\TT{v}_h}{\Kh}\left(\Norm{\PPh\TT{w}_h}{\Kh}+h^{-1}\Norm{\QQh\TT{v}_h}{\Kh}\right)\,.
	\end{align*}
	\fi
	The fourth term can be estimated analogously with $\TT{v}_h$ and $\TT{w}_h$ interchanged. Combining everything yields~\eqref{eq:geometricQmError1}.

	The estimates~\eqref{eq:geometricQmError2} and~\eqref{eq:geometricQmError3} follow from~\eqref{eq:geometricQmError1} as for $\TT{v}\in\LSpace[2]{\K, \TensorBundleTan}$ we have by $L^2$-norm equivalence
	\begin{align*}
		\Norm{\PPh\TT{v}^{e}}{\Kh}+ h^{-1}\Norm{\QQh\TT{v}^{e}}{\Kh}
		&\leqC \Norm{\PPh\TT{v}}{\K}+ h^{-1}\Norm{\QQh\TT{v}}{\K}\\
		&\leqC \Norm{\TT{v}}{\K}+ h^{\kg-1}\Norm{\TT{v}}{\K}.
	\end{align*}
\end{proof}

\begin{lemma}[The derivative term]\label{lem:geometricQnablaError}
	For $\TT{v}_h,\TT{w}_h\in\HSpace[1]{\Kh, \TensorBundleAmb}$ we have
	\begin{equation}\label{eq:geometricQnablaError-1}
		\Abs{\delta_{\nabla}(\TT{v}_h^\ell,\TT{w}_h^\ell)} \leqC h^{\kg}
			\GNorm{\TT{w}_{h}}\GNorm{\TT{v}_{h}}\,.
	\end{equation}

	If $\TT{v}\in\HSpaceTan[1]{\K}$ and $\TT{w}_h\in\Vh$ and $h<h_0$ small enough, then we obtain
	\begin{equation}\label{eq:geometricQnablaError-2}
		\Abs{\delta_{\nabla}(\TT{v},\TT{w}_h^\ell)} \leqC h^{\kg}\left(\EnergyNorm{\TT{w}_{h}}{a_h} + h^{-1}\Norm{\QQh\TT{w}_h}{\Kh}\right)\HNormTan {\TT{v}}{\K}.
	\end{equation}

	For $\TT{v},\TT{w}\in \HSpaceTan[2]{\K}$ we have
	\begin{equation}\label{eq:geometricQnablaError-3}
		\Abs{\delta_{\nabla}(\TT{v},\TT{w})} \leqC h^{\kg+1}\HNormTan[2]{\TT{v}}{\K}\HNormTan[2]{\TT{w}}{\K}.
	\end{equation}
\end{lemma}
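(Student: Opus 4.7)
For the first estimate \eqref{eq:geometricQnablaError-1} the strategy is to transfer the continuous inner product on $\K$ to the discrete surface $\Kh$ via the change of variables $d\G=\Abs{\det(B)}\,d\Gh$ and then use a split-and-triangulate argument. Using \cref{lem:l2-norm-equiv} the change of measure contributes an error of order $h^{\kg+1}$ times $\Norm{(\CDer\PP\TT{v}_h^\ell)^e}{\Kh}\Norm{(\CDer\PP\TT{w}_h^\ell)^e}{\Kh}$, both factors being controlled by $\GNorm{\cdot}$ through \eqref{eq:cder-to-gnorm-est}. The remaining difference is split by adding and subtracting $\Inner{\ChDer\PPh\TT{v}_h^e}{(\CDer\PP\TT{w}_h^\ell)^e}{\Kh}$ into two pieces, to each of which I apply Cauchy--Schwarz and \cref{lem:h1-estimates}. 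The $h^{\kg}$-factor in \eqref{eq:h1-estimates} is multiplied precisely by the four $\GNorm{\cdot}$-contributions, producing the claimed bound.

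For \eqref{eq:geometricQnablaError-2} I specialize \eqref{eq:geometricQnablaError-1} to the choice $\TT{v}_h\colonequals\TT{v}^e$, which is admissible because $\TT{v}\in\HSpaceTan[1]{\K}$ implies $\TT{v}^e\in\HSpaceAmb[1]{\Kh}$ by \cref{lem:gradient-estimate}. The factor $\GNorm{\TT{v}^e}$ is bounded by $\HNormTan[1]{\TT{v}}{\K}$ using \eqref{eq:gnorm-to-h1tan}, while $\GNorm{\TT{w}_h}$ is bounded by $\EnergyNorm{\TT{w}_h}{a_h}+h^{-1}\Norm{\QQh\TT{w}_h}{\Kh}$ using \eqref{eq:disc-h1-bounds-2}. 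Together this delivers \eqref{eq:geometricQnablaError-2}.

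The third estimate \eqref{eq:geometricQnablaError-3} is the genuine obstacle, since it requires gaining an extra factor of $h$ beyond what \eqref{eq:geometricQnablaError-1} gives. The plan is to exploit the tangentiality and $H^2$-regularity of $\TT{v},\TT{w}$. Since $\QQ\TT{v}=\QQ\TT{w}=0$, \cref{lem:mixed_projections} yields $\Norm{\QQh\TT{v}^e}{\Kh}\leqC h^{\kg}\HNormTan[1]{\TT{v}}{\K}$ and likewise for $\TT{w}$. I would then expand
\begin{equation*}
  \ChDer\PPh\TT{v}^e=\PPh\DDer\TT{v}^e-\PPh\DDer\QQh\TT{v}^e,\qquad \CDer\TT{v}=\PP\DDer\TT{v}^e-\PP\DDer\QQ\TT{v}^e,
\end{equation*}
and correspondingly for $\TT{w}$, and carry out the $L^2$-measure transfer as in the first paragraph (which contributes $h^{\kg+1}$ directly). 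The resulting error then decomposes into terms of three types: (i) products of two factors $(\PP-\PPh)$ or two factors $\PPh\DDer\QQh\TT{v}^e$/$\PP\DDer\QQ\TT{v}^e$, which by \cref{lem:P} and \cref{lem:pdq}/\cref{rem:pdq_h} combined with the $h^{\kg}$-smallness of $\QQh\TT{v}^e,\QQh\TT{w}^e$ already contribute $h^{2\kg}\leq h^{\kg+1}$; (ii) a single $(\PP-\PPh)$- or $\PPh\DDer\QQh$-factor multiplied by a ``smooth'' factor, which on the face of it is only $O(h^{\kg})$ but whose inner-product structure has the form $\Inner{\QQh\TT{A}^e}{\PP\TT{B}^e}{\Kh}$ with $\TT{A},\TT{B}$ built from derivatives of $\TT{v},\TT{w}$, so that the non-standard geometry estimate \eqref{eq:very-evil-trick} supplies the missing factor of $h$ — this is exactly where $\HNormTan[2]$-regularity is consumed, since the argument must apply to $\DDer\TT{v}^e,\DDer\TT{w}^e\in\HSpaceAmb[1]{\K}$. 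The main difficulty is bookkeeping: one must verify that every cross-term falls into one of these patterns and that no residual term of order only $h^{\kg}$ survives.
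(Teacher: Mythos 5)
Your proposal follows essentially the same route as the paper's proof: the measure-transfer via \cref{lem:l2-norm-equiv} plus the add-and-subtract splitting with \cref{lem:h1-estimates} and \eqref{eq:cder-to-gnorm-est} for \eqref{eq:geometricQnablaError-1}, the specialization via \eqref{eq:gnorm-to-h1tan} and \eqref{eq:disc-h1-bounds-2} for \eqref{eq:geometricQnablaError-2}, and for \eqref{eq:geometricQnablaError-3} the refined decomposition into quadratic $O(h^{2\kg})$ terms and single-difference terms of the form $\Inner{\QQh\TT{A}^e}{\PP\TT{B}^e}{\Kh}$ resolved by the non-standard estimate \eqref{eq:very-evil-trick} at the cost of $\HSpaceTan[2]$-regularity. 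The bookkeeping you defer is exactly what the paper carries out, with no additional ideas required.
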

\begin{proof}
	We begin proving~\eqref{eq:geometricQnablaError-1} by estimating
	\begin{align}\label{eq:delta-nabla-split}
		\Abs{\delta_{\nabla}(\TT{v}_h^\ell,\TT{w}_h^\ell)}
		\leq&  \Abs{\Inner{\CDer\PP\TT{v}_h^\ell}{\CDer\PP\TT{w}_h^\ell}{\K}
		- \Inner{(\CDer\PP\TT{v}_h^\ell)^{e}}{(\CDer\PP\TT{w}_h^\ell)^{e}}{\Kh}}\\
		&+ \Abs{ \Inner{(\CDer\PP\TT{v}_h^\ell)^{e}-\ChDer\PPh\TT{v}_h}{(\CDer\PP\TT{w}_h^\ell)^{e}}{\Kh}} \notag\\
		&+ \Abs{\Inner{\ChDer\PPh\TT{v}_h}{(\CDer\PP\TT{w}_h^\ell)^{e}-\ChDer\PPh\TT{w}_h}{\Kh}
		}\,.\notag
	\end{align}
	We use \cref{lem:l2-norm-equiv} to estimate the first term on the right hand side by
	\if\isimastyle1
	\begin{align*}
		\Abs{\Inner{\CDer\PP\TT{v}_h^\ell}{\CDer\PP\TT{w}_h^\ell}{\K}
		- \Inner{(\CDer\PP\TT{v}_h^\ell)^{e}}{(\CDer\PP\TT{w}_h^\ell)^{e}}{\Kh}}
		&\leqC h^{\kg +1} \Norm{(\CDer\PP\TT{v}_h^\ell)^{e}}{\Kh}\Norm{(\CDer\PP\TT{w}_h^\ell)^{e}}{\Kh}\\
		& \leqC h^{\kg +1} \Norm{\CDer\PP\TT{v}_h^\ell}{\K}\Norm{\CDer\PP\TT{w}_h^\ell}{\K}\\
		& \leqC h^{\kg +1} \GNorm{\TT{v}_h}\GNorm{\TT{w}_h}\;,
	\intertext{where~\eqref{eq:cder-to-gnorm-est} was used for the last estimate. For the second term we use \cref{lem:h1-estimates} and~\eqref{eq:cder-to-gnorm-est} to obtain}
		\Abs{ \Inner{(\CDer\PP\TT{v}_h^\ell)^{e}-\ChDer\PPh\TT{v}_h}{(\CDer\PP\TT{w}_h^\ell)^{e}}{\Kh}}
		&\leq \Norm{(\CDer\PP\TT{v}_h^\ell)^{e}-\ChDer\PPh\TT{v}_h}{\Kh}\Norm{(\CDer\PP\TT{w}_h^\ell)^{e}}{\Kh}\\
		&\leqC h^{\kg} \GNorm{\TT{v}_h} \Norm{(\CDer\PP\TT{w}_h^\ell)^{e}}{\Kh}\\
		&\leqC h^{\kg} \GNorm{\TT{v}_h}\GNorm{\TT{w}_h}\;.
	\end{align*}
	\else
	\begin{align*}
		&\Abs{\Inner{\CDer\PP\TT{v}_h^\ell}{\CDer\PP\TT{w}_h^\ell}{\K}
		- \Inner{(\CDer\PP\TT{v}_h^\ell)^{e}}{(\CDer\PP\TT{w}_h^\ell)^{e}}{\Kh}}\\
		&\qquad \leqC h^{\kg +1} \Norm{(\CDer\PP\TT{v}_h^\ell)^{e}}{\Kh}\Norm{(\CDer\PP\TT{w}_h^\ell)^{e}}{\Kh}\\
		&\qquad \leqC h^{\kg +1} \Norm{\CDer\PP\TT{v}_h^\ell}{\K}\Norm{\CDer\PP\TT{w}_h^\ell}{\K}\\
		&\qquad \leqC h^{\kg +1} \GNorm{\TT{v}_h}\GNorm{\TT{w}_h}\;,
	\intertext{where~\eqref{eq:cder-to-gnorm-est} was used for the last estimate. For the second term we use \cref{lem:h1-estimates} and~\eqref{eq:cder-to-gnorm-est} to obtain}
		&\Abs{ \Inner{(\CDer\PP\TT{v}_h^\ell)^{e}-\ChDer\PPh\TT{v}_h}{(\CDer\PP\TT{w}_h^\ell)^{e}}{\Kh}}\\
		&\qquad \leq \Norm{(\CDer\PP\TT{v}_h^\ell)^{e}-\ChDer\PPh\TT{v}_h}{\Kh}\Norm{(\CDer\PP\TT{w}_h^\ell)^{e}}{\Kh}\\
		&\qquad \leqC h^{\kg} \GNorm{\TT{v}_h} \Norm{(\CDer\PP\TT{w}_h^\ell)^{e}}{\Kh}\\
		&\qquad \leqC h^{\kg} \GNorm{\TT{v}_h}\GNorm{\TT{w}_h}\;.
	\end{align*}
	\fi
	The third term can be estimated analogously.

	Estimate~\eqref{eq:geometricQnablaError-2} follows by~\eqref{eq:geometricQnablaError-1} using~\eqref{eq:gnorm-to-h1tan} and~\eqref{eq:disc-h1-bounds-2}.
	To obtain estimate~\eqref{eq:geometricQnablaError-3} we split $\delta_{\nabla}$ as before in~\eqref{eq:delta-nabla-split} and also estimate the first term in the same manner. Instead of using \cref{lem:h1-estimates} for the second term we split this part as in the proof of \cref{lem:h1-estimates} into
	\if\isimastyle1
	\begin{align*}
		\Abs{ \Inner{(\CDer\TT{v})^{e}-\ChDer\PPh\TT{v}^{e}}{(\CDer\TT{w})^{e}}{\Kh}}
		&\leq \Abs{ \Inner{(\PP-\PP_h)\DDer\TT{v}^{e}}{(\CDer\TT{w})^{e}}{\Kh}}
		+\Abs{ \Inner{\PPh\DDer\QQh\TT{v}^{e}}{(\CDer\TT{w})^{e}}{\Kh}}\;.
	\end{align*}
	\else
	\begin{multline*}
		\Abs{ \Inner{(\CDer\TT{v})^{e}-\ChDer\PPh\TT{v}^{e}}{(\CDer\TT{w})^{e}}{\Kh}}\\
		\leq \Abs{ \Inner{(\PP-\PP_h)\DDer\TT{v}^{e}}{(\CDer\TT{w})^{e}}{\Kh}}
		+\Abs{ \Inner{\PPh\DDer\QQh\TT{v}^{e}}{(\CDer\TT{w})^{e}}{\Kh}}\;.
	\end{multline*}
	\fi
	For the first term we estimate using~\eqref{eq:very-evil-trick} and \cref{lem:gradient-estimate}
	\begin{align*}
		\Abs{ \Inner{(\PP-\PPh)\DDer\TT{v}^{e}}{(\CDer\TT{w})^{e}}{\Kh}}
		& = \Abs{ \Inner{\QQh\DDer\TT{v}^{e}}{(\CDer\TT{w})^{e}}{\Kh}}\\
		& \leqC h^{\kg+1} \HNormAmb{\DDer\TT{v}^{e}}{\K}\HNormAmb{(\CDer\TT{w})^{e}}{\K}\\
		& \leqC h^{\kg+1} \HNormTan[1]{\TT{v}}{\K}\HNormTan[2]{\TT{w}}{\K}.
	\end{align*}
	The second term can be estimated using \eqref{eq:pdq-disc}. Thus we obtain
	\begin{align*}
		\Abs{ \Inner{(\CDer\TT{v})^{e}-\ChDer\PPh\TT{v}^{e}}{(\CDer\TT{w})^{e}}{\Kh}}
		& \leqC h^{\kg+1} \HNormTan[1]{\TT{v}}{\K}\HNormTan[2]{\TT{w}}{\K}
	\end{align*}
	as estimate for the second term in~\eqref{eq:delta-nabla-split}. For the third term in~\eqref{eq:delta-nabla-split} we write
	\begin{multline*}
		\Abs{\Inner{\ChDer\PPh\TT{v}^{e}}{(\CDer\TT{w})^{e}-\ChDer\PPh\TT{w}^{e}}{\Kh}
		}
		\leq \Abs{\Inner{(\CDer\TT{v})^{e}}{(\CDer\TT{w})^{e}-\ChDer\PPh\TT{w}^{e}}{\Kh}}\\
		+ \Abs{\Inner{\ChDer\PPh\TT{v}^{e}-(\CDer\TT{v})^{e}}{(\CDer\TT{w})^{e}-\ChDer\PPh\TT{w}^{e}}{\Kh}}.
	\end{multline*}
	The first term corresponds to the second term in~\eqref{eq:delta-nabla-split} with $\TT{v}$ and $\TT{w}$ interchanged and can be dealt with accordingly. The last term can be estimated using \cref{lem:h1-estimates} and~\eqref{eq:gnorm-to-h1tan} by
	\begin{align*}
		\Abs{\Inner{\ChDer\PPh\TT{v}^{e}-(\CDer\TT{v})^{e}}{(\CDer\TT{w})^{e}-\ChDer\PPh\TT{w}^{e}}{\Kh}}&\leqC h^{2\kg} \HNormTan[1]{\TT{v}}{\K} \HNormTan[1]{\TT{w}}{\K}.
	\end{align*}
	Combining everything, we indeed obtain estimate~\eqref{eq:geometricQnablaError-3}.
\end{proof}

\subsection{\EN{Discretisation}{Discretization} errors}\label{sec:discretization-errors}
We will now prove the main results of this paper as described in \cref{sec:main-results}. To this end we proceed as in the standard theory for Galerkin methods and analogously to the theory of scalar surface finite elements \cite{De2009Higher}. This means we first introduce a perturbed Galerkin orthogonality for the discrete energy that is then used to prove the error estimates in the energy norm. We then improve these estimates for the tangential part of the error essentially by repeating the argument for the continuous energy and making use of the already shown discrete estimate. For the $L^2$-norm estimates we employ an Aubin--Nitsche type argument based on the dual problem.

\subsubsection{Perturbed Galerkin orthogonality}\label{sec:the-perturbed-galerkin-orthogonality}
In the standard theory of numerics for partial differential equations, an important tool is the Galerkin orthogonality, i.e., the property that the error between the continuous and the discrete solution is orthogonal on the discrete test space. In our setting this is no longer the case, but the error that is made with respect to true orthogonality can be quantified.

\begin{lemma}\label{lem:GalerkinOrtho}
	 Let $\TT{u}\in\HSpaceTan{\K}$ be the solution to the continuous problem, $\TT{u}_h\in \Vh$ the solution to the discrete problem and $\TT{v}_h\in \Vh$ a test tensor field. Then
	 \begin{align}
	 	a(\TT{u}-\TT{u}_{h}^{\ell},\TT{v}_{h}^{\ell}) & = \delta(\TT{u}_{h}^{\ell})(\TT{v}_h^{\ell}) + s_{h}(\TT{u}_h,\TT{v}_h) \label{eq:GalerkinOrthoCont}\\
	 	A_{h}(\TT{u}^{e}-\TT{u}_{h},\TT{v}_{h}) & =  \delta(\TT{u})(\TT{v}_h^{\ell}) + s_{h}(\TT{u}^{e},\TT{v}_h)\label{eq:GalerkinOrthoDisc}.
	 \end{align}
\end{lemma}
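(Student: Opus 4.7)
My plan is to treat both identities by the same bookkeeping recipe: rewrite the left-hand side as (continuous bilinear form applied to lifts) minus (discrete bilinear form applied to extensions), then use the continuous and discrete PDEs to eliminate $\TT{u}$ and $\TT{u}_h$ in favor of $\TT{f}$, and finally recognize the remaining surface/element integrals as exactly the $\delta_{\nabla}$, $\delta_m$, $\delta_l$ terms. The one non-routine observation needed at the start is that although the test function $\TT{v}_h^{\ell}$ is not tangential, we may still insert it in the continuous weak form because $\TT{f}$ is tangential and the form $a$ projects onto the tangential bundle; explicitly, testing \eqref{eq:continuous_model_problem} against the tangential field $\PP\TT{v}_h^{\ell}\in\HSpaceTan[1]{\G}$ gives
\[
a(\TT{u},\TT{v}_h^{\ell}) = \Inner{\CDer\TT{u}}{\CDer\PP\TT{v}_h^{\ell}}{\G} + \Inner{\TT{u}}{\PP\TT{v}_h^{\ell}}{\G} = \Inner{\TT{f}}{\PP\TT{v}_h^{\ell}}{\G} = \Inner{\TT{f}}{\TT{v}_h^{\ell}}{\K},
\]
using $\PP\TT{f}=\TT{f}$ in the last step.

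For \eqref{eq:GalerkinOrthoCont} I would then split $a(\TT{u}-\TT{u}_h^{\ell},\TT{v}_h^{\ell})=a(\TT{u},\TT{v}_h^{\ell})-a(\TT{u}_h^{\ell},\TT{v}_h^{\ell})$. Substituting the above for the first summand and applying the definitions of $\delta_{\nabla}$ and $\delta_m$ to convert $a(\TT{u}_h^{\ell},\TT{v}_h^{\ell})$ into $a_h(\TT{u}_h,\TT{v}_h)+\delta_{\nabla}(\TT{u}_h^{\ell},\TT{v}_h^{\ell})+\delta_m(\TT{u}_h^{\ell},\TT{v}_h^{\ell})$ (recall $\TT{v}_h^{\ell e}=\TT{v}_h$ on $\Gh$), followed by the discrete equation $a_h(\TT{u}_h,\TT{v}_h)=l_h(\TT{v}_h)-s_h(\TT{u}_h,\TT{v}_h)$, I obtain
\[
a(\TT{u}-\TT{u}_h^{\ell},\TT{v}_h^{\ell}) = \Inner{\TT{f}}{\TT{v}_h^{\ell}}{\K}-\Inner{\TT{f}^e}{\PPh\TT{v}_h}{\Kh}+s_h(\TT{u}_h,\TT{v}_h)-\delta_{\nabla}-\delta_m.
\]
The first two terms are precisely $\delta_l(\TT{v}_h^{\ell})$, so the combination equals $\delta(\TT{u}_h^{\ell})(\TT{v}_h^{\ell})+s_h(\TT{u}_h,\TT{v}_h)$ as claimed.

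For \eqref{eq:GalerkinOrthoDisc} I would proceed symmetrically: write $A_h(\TT{u}^e-\TT{u}_h,\TT{v}_h)=a_h(\TT{u}^e,\TT{v}_h)+s_h(\TT{u}^e,\TT{v}_h)-l_h(\TT{v}_h)$, transform $a_h(\TT{u}^e,\TT{v}_h)=a(\TT{u},\TT{v}_h^{\ell})-\delta_{\nabla}(\TT{u},\TT{v}_h^{\ell})-\delta_m(\TT{u},\TT{v}_h^{\ell})$ by the definitions of $\delta_{\nabla},\delta_m$, and replace $a(\TT{u},\TT{v}_h^{\ell})$ by $\Inner{\TT{f}}{\TT{v}_h^{\ell}}{\K}$ via the same observation as above. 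Collecting $\Inner{\TT{f}}{\TT{v}_h^{\ell}}{\K}-l_h(\TT{v}_h)=\delta_l(\TT{v}_h^{\ell})$ finishes the proof.

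The calculation is entirely elementary; the only mild obstacle is remembering that $\PP$ and $\PPh$ act differently on lifts, so each conversion between $(\cdot,\cdot)_{\K}$-pairings of projected lifted fields and $(\cdot,\cdot)_{\Kh}$-pairings of projected extensions must be routed through the correct $\delta$-term. Once $\PP\TT{f}=\TT{f}$ is exploited to legalize the non-tangential test function, both identities follow by the same one-line rearrangement.
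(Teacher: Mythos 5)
Your proposal is correct and is exactly the argument the paper intends: its proof of this lemma is the one-line remark that the identities follow by inserting the test field into both problems and rearranging, which is precisely the computation you carry out (including the key point that $\PP\TT{v}_h^{\ell}$ is an admissible tangential test function and $\PP\TT{f}=\TT{f}$ turns $\Inner{\TT{f}}{\PP\TT{v}_h^{\ell}}{\K}$ into $\Inner{\TT{f}}{\TT{v}_h^{\ell}}{\K}$). The bookkeeping of which arguments of $\delta_{\nabla}$ and $\delta_m$ are lifts versus extensions matches the definitions in \eqref{eq:geom-error-delta}, so nothing is missing.
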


\begin{proof}
	This follows directly by inserting the test tensor field into both the discrete and the continuous problems and subtracting and rearranging the terms. Note that this actually motivated the definition of $\delta$ in~\eqref{eq:geom-error-delta}.
\end{proof}

In order to quantify this deviation from true orthogonality, we will need the following estimates

\begin{lemma}\label{lem:discreteGalerkinOrtho}
	For $\TT{u} \in \HSpaceTan{\K}$ and $\TT{v}_{h}\in \Vh$, we have for $h<h_0$ small enough
	\begin{align*}
		\Abs{\delta(\TT{u})(\TT{v}_h^{\ell})}
		&\leqC h^{\kg}\left(\EnergyNorm{\TT{v}_h}{a_h} + h^{-1}\Norm{\QQh\TT{v}_h}{\Kh}\right)
		\left(\HNormTan[1]{\TT{u}}{\K} + \Norm{\TT{f}}{\K}\right) \\
		&\leqC h^{\kg-1+\alpha}\EnergyNorm{\TT{v}_h}{A_h}
		\left(\HNormTan[1]{\TT{u}}{\K} + \Norm{\TT{f}}{\K}\right)\,, \\
		\Abs{s_h(\TT{u}^{e}, \TT{v}_h)}
&\leqC h^{\kp-\alpha}\EnergyNorm{\TT{v}_h}{s_h}\Norm{\TT{u}}{\K}\,.
	\end{align*}
\end{lemma}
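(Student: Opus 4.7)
The plan is to decompose $\delta(\TT{u})(\TT{v}_h^\ell)$ according to its definition in~\eqref{eq:geom-error-delta} and bound each of the three pieces using the geometric error estimates from \cref{sec:geometric-errors}. Namely, apply \cref{lem:geometricQlError} to $\delta_l(\TT{v}_h^\ell)$, the tangential variant~\eqref{eq:geometricQmError2} of \cref{lem:geometricQmError} to $\delta_m(\TT{u},\TT{v}_h^\ell)$, and the tangential variant~\eqref{eq:geometricQnablaError-2} of \cref{lem:geometricQnablaError} to $\delta_\nabla(\TT{u},\TT{v}_h^\ell)$. The first two contributions scale like $h^{\kg+1}$ while the third scales like $h^{\kg}$, so the gradient part dominates. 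After collecting with the triangle inequality, one obtains the first stated bound
\[
	\Abs{\delta(\TT{u})(\TT{v}_h^\ell)} \leqC h^{\kg}\bigl(\EnergyNorm{\TT{v}_h}{a_h}+h^{-1}\Norm{\QQh\TT{v}_h}{\Kh}\bigr)\bigl(\HNormTan[1]{\TT{u}}{\K}+\Norm{\TT{f}}{\K}\bigr).
\]

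For the second form of the same inequality, the task is to re-express $\EnergyNorm{\TT{v}_h}{a_h}+h^{-1}\Norm{\QQh\TT{v}_h}{\Kh}$ in terms of $\EnergyNorm{\TT{v}_h}{A_h}$. I would invoke \eqref{eq:disc-h1-bounds-1} from \cref{lem:disc-h1-bounds} to write
\[
	h^{-1}\Norm{\QQh\TT{v}_h}{\Kh}\leqC h^{\kp-1}\Norm{\PPh\TT{v}_h}{\Kh}+h^{\alpha-1}\EnergyNorm{\TT{v}_h}{s_h},
\]
absorb the first summand (for $h$ small enough, since $\kp\geq 1$) into $\EnergyNorm{\TT{v}_h}{a_h}$, and then use $h^{\kg}\leq h^{\kg-1+\alpha}\cdot h^{1-\alpha}\leq h^{\kg-1+\alpha}$ (for $\alpha\leq 1$ and $h\leq 1$) so that both remaining terms fit under the common factor $h^{\kg-1+\alpha}$. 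Finally, combining $\EnergyNorm{\TT{v}_h}{a_h}$ and $\EnergyNorm{\TT{v}_h}{s_h}$ via the splitting $\EnergyNorm{\TT{v}_h}{A_h}^2=\EnergyNorm{\TT{v}_h}{a_h}^2+\EnergyNorm{\TT{v}_h}{s_h}^2$ yields the stated bound.

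For the estimate on $s_h(\TT{u}^e,\TT{v}_h)$, the strategy is to exploit that the continuous solution is exactly tangential, i.e., $\QQ\TT{u}=0$ and hence $\TT{u}^e=\PP\TT{u}^e$ on $U_\delta$. Then $\QQhTilde\TT{u}^e=(\QQhTilde-\QQ)\TT{u}^e=(\PP-\PPhTilde)\TT{u}^e$, so \cref{lem:P} (applied to the projection $\PPhTilde$ built from $\nnhTilde$, which satisfies $\Norm{\nn-\nnhTilde}{\infty}\leqC h^{\kp}$ by~\eqref{eq:better-normal}) together with the $L^2$-norm equivalence in \cref{lem:l2-norm-equiv} gives
\[
	\Norm{\QQhTilde\TT{u}^e}{\Kh}\leqC h^{\kp}\Norm{\TT{u}}{\K}.
\]
Pulling one factor of $\beta^{1/2}h^{-\alpha}$ into $\EnergyNorm{\TT{v}_h}{s_h}$ via Cauchy--Schwarz on the penalty inner product then yields $\Abs{s_h(\TT{u}^e,\TT{v}_h)}\leqC h^{\kp-\alpha}\Norm{\TT{u}}{\K}\EnergyNorm{\TT{v}_h}{s_h}$.

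Nothing in this plan is genuinely difficult; the only care needed is in the second reformulation, where one must track the powers of $h$ correctly so that the geometric factor $h^{\kg}$, the correction $h^{\kp-1}$ in \eqref{eq:disc-h1-bounds-1}, and the rescaling from $\EnergyNorm{\cdot}{s_h}$ (which absorbs a factor of $h^{-\alpha}$) all combine to the claimed exponent $\kg-1+\alpha$. This bookkeeping, together with the observation that only $\delta_\nabla$ delivers $h^{\kg}$ while the other two geometric contributions are one order better, is what makes the lemma work.
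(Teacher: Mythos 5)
Your proposal is correct and follows essentially the same route as the paper's proof: the same decomposition of $\delta$ into $\delta_l$, $\delta_m$, $\delta_\nabla$ estimated by \cref{lem:geometricQlError}, \eqref{eq:geometricQmError2} and \eqref{eq:geometricQnablaError-2}, the same use of \eqref{eq:disc-h1-bounds-1} to pass to the $A_h$-energy norm, and the same Cauchy--Schwarz plus tangentiality-of-$\TT{u}$ argument (with \eqref{eq:better-normal} and $L^2$-norm equivalence) for the penalty term.
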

\begin{proof}
	By \cref{lem:geometricQlError}, \cref{lem:geometricQmError} \eqref{eq:geometricQmError2}\EN{}{,} and \cref{lem:geometricQnablaError} \eqref{eq:geometricQnablaError-2}, we have
	\begin{align*}
		\Abs{\delta(\TT{u})(\TT{v}_h^{\ell})} & \leq \Abs{\delta_l(\TT{v}_h^{\ell})} + \Abs{\delta_{m}(\TT{u},\TT{v}_h^{\ell})}
		+\Abs{\delta_{\nabla}(\TT{u},\TT{v}_h^{\ell})}\\
		&\leqC
		h^{\kg+1}\left(\EnergyNorm{\TT{v}_{h}}{a_h}
		+ h^{-1}\Norm{\QQh\TT{v}_h}{\Kh}\right)\left(\Norm{\TT{f}}{\K}+ \Norm{\TT{u}}{\K}+ h^{-1}\HNormTan {\TT{u}}{\K}\right)\\
		&\leqC
		h^{\kg}\left(\EnergyNorm{\TT{v}_{h}}{a_h}
		+ h^{-1}\Norm{\QQh\TT{v}_h}{\Kh}\right)\left(\HNormTan {\TT{u}}{\K}+ \Norm{\TT{f}}{\K}\right).
	\end{align*}
	Furthermore, we have by \eqref{eq:disc-h1-bounds-1}
	\begin{align*}
		\EnergyNorm{\TT{v}_{h}}{a_h}
		+ h^{-1}\Norm{\QQh\TT{v}_h}{\Kh}\leqC h^{-1+\alpha} \EnergyNorm{\TT{v}_{h}}{A_h}\,.
	\end{align*}
	The second estimate follows from
	\begin{align*}
		\Abs{s_h(\TT{u}^{e}, \TT{v}_h)}  &= \beta h^{-2\alpha} \Abs{\Inner{\QQhTilde \TT{u}^{e}}{\QQhTilde \TT{v}_h}{\Kh}}\\
		&\leq  \beta h^{-2\alpha} \Norm{\QQhTilde \TT{u}^{e}}{\Kh}\Norm{\QQhTilde \TT{v}_h}{\Kh}\\
		&\leqC \beta h^{\kp-\alpha} \Norm{\TT{u}^{e}}{\Kh}\EnergyNorm{ \TT{v}_h}{s_h}
	\end{align*}
	and the $L^2$-norm equivalence.
\end{proof}

This will be enough to prove the error estimates in the discrete energy norm. In order to improve our estimates later, we will also employ the following result:

\begin{lemma}\label{lem:contGalerkinOrtho}
	Let $\TT{u},\TT{v}\in \HSpaceTan[1]{\G}$ and $\TT{u}_{h},\TT{u}_{h}\in \Vh$
	discrete approximations of $\TT{u},\TT{v}$. Then
	\if\isimastyle1
	\begin{align*}
		\Abs{\delta(\TT{u}_{h}^{\ell})(\TT{v}_h^{\ell})}
		&\leqC h^{\kg}\big(\HNormTan[1]{\TT{v}}{\K} + \GNorm{\TT{v}^{e}-\TT{v}_h}\big)
		\big(\Norm{\TT{f}}{\K} + \HNormTan[1]{\TT{u}}{\K} +\GNorm{\TT{u}^{e}-\TT{u}_h}\big).
	\end{align*}
	\else
	\begin{multline*}
		\Abs{\delta(\TT{u}_{h}^{\ell})(\TT{v}_h^{\ell})}
		\leqC h^{\kg}\big(\HNormTan[1]{\TT{v}}{\K} + \GNorm{\TT{v}^{e}-\TT{v}_h}\big)
		\big(\Norm{\TT{f}}{\K} + \HNormTan[1]{\TT{u}}{\K} +\GNorm{\TT{u}^{e}-\TT{u}_h}\big).
	\end{multline*}
	\fi
	If $\TT{u},\TT{v}\in \HSpaceTan[2]{\G}$ then
	\if\isimastyle1
	\begin{align*}
		\Abs{\delta(\TT{u}_{h}^{\ell})(\TT{v}_h^{\ell})}
		&\leqC h^{\kg+1}\big(\HNormTan[2]{\TT{v}}{\K} + h^{-1}\GNorm{\TT{v}^{e}\!-\!\TT{v}_h}\big)
		 \big(\Norm{\TT{f}}{\K} + \HNormTan[2]{\TT{u}}{\K} + h^{-1}\GNorm{\TT{u}^{e}\!-\!\TT{u}_h}\big).
	\end{align*}
	\else
	\begin{multline*}
		\Abs{\delta(\TT{u}_{h}^{\ell})(\TT{v}_h^{\ell})}\\
		\qquad\leqC h^{\kg+1}\big(\HNormTan[2]{\TT{v}}{\K} + h^{-1}\GNorm{\TT{v}^{e}\!-\!\TT{v}_h}\big)
		 \big(\Norm{\TT{f}}{\K} + \HNormTan[2]{\TT{u}}{\K} + h^{-1}\GNorm{\TT{u}^{e}\!-\!\TT{u}_h}\big).
	\end{multline*}
	\fi
\end{lemma}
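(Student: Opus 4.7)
The starting point for both bounds is the definition
\[
  \delta(\TT{u}_{h}^{\ell})(\TT{v}_h^{\ell}) = \delta_{l}(\TT{v}_h^{\ell}) - \delta_{m}(\TT{u}_h^{\ell},\TT{v}_h^{\ell}) - \delta_{\nabla}(\TT{u}_h^{\ell},\TT{v}_h^{\ell}),
\]
so the task reduces to bounding each of the three geometric error forms, using their estimates from \cref{lem:geometricQlError}, \cref{lem:geometricQmError}, and \cref{lem:geometricQnablaError}.

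\textbf{First estimate.} Here I would apply the non-tangential versions directly: \cref{lem:geometricQlError} for $\delta_l$, \eqref{eq:geometricQmError1} for $\delta_m$, and \eqref{eq:geometricQnablaError-1} for $\delta_\nabla$. The leading order comes from $\delta_\nabla$, yielding $h^{\kg}\GNorm{\TT{u}_h}\GNorm{\TT{v}_h}$; the other two terms enter at order $h^{\kg+1}$ and are absorbed using $\Norm{\PPh\cdot}{\Kh}+h^{-1}\Norm{\QQh\cdot}{\Kh}\leq \GNorm{\cdot}$ and $\Norm{\TT{f}}{\K}\leqC \Norm{\TT{f}}{\K}+\HNormTan[1]{\TT{u}}{\K}$. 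Then the triangle inequality combined with \eqref{eq:gnorm-to-h1tan} gives
\[
  \GNorm{\TT{u}_h} \leq \GNorm{\TT{u}^{e}} + \GNorm{\TT{u}^{e}-\TT{u}_h} \leqC \HNormTan[1]{\TT{u}}{\K} + \GNorm{\TT{u}^{e}-\TT{u}_h},
\]
and analogously for $\TT{v}_h$, which directly produces the asserted product bound.

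\textbf{Second estimate.} The idea is to exploit the extra $H^2$-regularity of $\TT{u}$ and $\TT{v}$ by bilinearity. I write $\TT{u}_h^{\ell}=\TT{u}+(\TT{u}_h^{\ell}-\TT{u})$ and $\TT{v}_h^{\ell}=\TT{v}+(\TT{v}_h^{\ell}-\TT{v})$ and expand each bilinear form $\delta_m,\delta_\nabla$ into four pieces (and $\delta_l$ into two). For the tangential-tangential piece I apply the sharp estimates \eqref{eq:geometricQmError3}, \eqref{eq:geometricQnablaError-3} to obtain $h^{\kg+1}$ times products of $\HNormTan[2]$-norms, and \cref{lem:geometricQlError} with $L^2$-norm equivalence handles $\delta_l(\TT{v})$ at order $h^{\kg+1}\Norm{\TT{f}}{\K}\HNormTan[2]{\TT{v}}{\K}$. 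For the mixed terms (one tangential argument, one error) I use \eqref{eq:geometricQmError2} and \eqref{eq:geometricQnablaError-1}, bounding the tangential factor via \eqref{eq:gnorm-to-h1tan} and the Sobolev embedding $\HSpaceTan[2]\subset\HSpaceTan[1]$; this yields contributions of size $h^{\kg}\HNormTan[2]{\TT{u}}{\K}\GNorm{\TT{v}^{e}-\TT{v}_h}$, which is exactly the cross term $h^{\kg+1}\cdot h^{-1}\GNorm{\TT{v}^{e}-\TT{v}_h}\cdot\HNormTan[2]{\TT{u}}{\K}$ in the target. Finally, the error-error term from $\delta_\nabla$ gives $h^{\kg}\GNorm{\TT{u}^{e}-\TT{u}_h}\GNorm{\TT{v}^{e}-\TT{v}_h}$, which is absorbed into $h^{\kg+1}\cdot h^{-2}\GNorm\GNorm=h^{\kg-1}\GNorm\GNorm$ using $h\leq h_{0}$.

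\textbf{Main obstacle.} The bookkeeping is the only subtlety: one has to check term by term that the four contributions arising from the bilinear split fit into the product structure $(\HNormTan[2]{\TT{v}}{\K}+h^{-1}\GNorm{\cdot})(\Norm{\TT{f}}{\K}+\HNormTan[2]{\TT{u}}{\K}+h^{-1}\GNorm{\cdot})$. The nontrivial point is that $\delta_\nabla$ only gives order $h^{\kg}$ on the non-tangential bilinear estimate \eqref{eq:geometricQnablaError-1}, and one must recognize that this is precisely compensated by the $h^{-1}$ weights attached to the $\GNorm{\cdot}$ terms in the target, so that no additional smallness assumption beyond $h\leq h_{0}$ is required.
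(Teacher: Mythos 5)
Your proposal is correct and follows essentially the same route as the paper: decompose $\delta$ into $\delta_l$, $\delta_m$, $\delta_\nabla$, control each via \cref{lem:geometricQlError}--\cref{lem:geometricQnablaError} together with $\Norm{\PPh\cdot}{\Kh}+h^{-1}\Norm{\QQh\cdot}{\Kh}\leq\GNorm{\cdot}$, \eqref{eq:gnorm-to-h1tan} and the triangle inequality, and for the $H^2$-case perform the bilinear splitting around the smooth fields so that the tangential--tangential piece can use the sharper order-$h^{\kg+1}$ estimates \eqref{eq:geometricQmError3} and \eqref{eq:geometricQnablaError-3} while the mixed and error--error pieces are compensated by the $h^{-1}\GNorm{\cdot}$ weights. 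The only (immaterial) differences are that the paper applies the four-way splitting of $\delta_\nabla$ in both cases and leaves $\delta_l$, $\delta_m$ unsplit, whereas you split all three forms only in the second case.
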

\begin{proof}
	We write again
	\begin{align*}
		\Abs{\delta(\TT{u}_h^{\ell})(\TT{v}_h^{\ell})} & \leq \Abs{\delta_l(\TT{v}_h^{\ell})} + \Abs{\delta_{m}(\TT{u}_h^{\ell},\TT{v}_h^{\ell})}
		+\Abs{\delta_{\nabla}(\TT{u}_h^{\ell},\TT{v}_h^{\ell})}.
	\end{align*}
	The first two terms are estimated using \cref{lem:geometricQlError} and \cref{lem:geometricQmError} \eqref{eq:geometricQmError1} by
	\if\isimastyle1
	\begin{align*}
		\Abs{\delta_l(\TT{v}_h^{\ell})}+ \Abs{\delta_{m}(\TT{u}_h^{\ell},\TT{v}_h^{\ell})}
		&\leqC h^{\kg+1}\left(\Norm{\PPh\TT{v}_h}{\Kh} + h^{-1}\Norm{\QQh\TT{v}_h}{\Kh}\right)\left(\Norm{\TT{f}}{\K} + \Norm{\PPh\TT{u}_h}{\Kh} + h^{-1}\Norm{\QQh\TT{u}_h}{\Kh}\right).
	\end{align*}
	\else
	\begin{multline*}
		\Abs{\delta_l(\TT{v}_h^{\ell})}+ \Abs{\delta_{m}(\TT{u}_h^{\ell},\TT{v}_h^{\ell})}\\
		 \leqC h^{\kg+1}\left(\Norm{\PPh\TT{v}_h}{\Kh} + h^{-1}\Norm{\QQh\TT{v}_h}{\Kh}\right)\left(\Norm{\TT{f}}{\K} + \Norm{\PPh\TT{u}_h}{\Kh} + h^{-1}\Norm{\QQh\TT{u}_h}{\Kh}\right).
	\end{multline*}
	\fi
	We then estimate using~\eqref{eq:gnorm-to-h1tan}
	\begin{align*}
		\Norm{\PPh\TT{v}_h}{\Kh} + h^{-1}\Norm{\QQh\TT{v}_h}{\Kh} \leq \GNorm{\TT{v}_h}
		&\leq \GNorm{\TT{v}^{e}} + \GNorm{\TT{v}^{e} - \TT{v}_h}\\
		&\leqC \HNormTan[1]{\TT{v}}{\K} + \GNorm{\TT{v}^{e} - \TT{v}_h}\;.
	\end{align*}
	For the last term we get
	\if\isimastyle1
	\begin{align*}
		\Abs{\delta_{\nabla}(\TT{u}_h^{\ell},\TT{v}_h^{\ell})}
		 &\leq \Abs{\delta_{\nabla}(\TT{u}_h^{\ell}-\TT{u},\TT{v}_h^{\ell}-\TT{v})}
		+ \Abs{\delta_{\nabla}(\TT{u},\TT{v}_h^{\ell}-\TT{v})}
		+\Abs{\delta_{\nabla}(\TT{u}_h^{\ell}-\TT{u},\TT{v})}
		+ \Abs{ \delta_{\nabla}(\TT{u},\TT{v})}
	\end{align*}
	\else
	\begin{multline*}
		\Abs{\delta_{\nabla}(\TT{u}_h^{\ell},\TT{v}_h^{\ell})}
		 \leq \Abs{\delta_{\nabla}(\TT{u}_h^{\ell}-\TT{u},\TT{v}_h^{\ell}-\TT{v})}
		+ \Abs{\delta_{\nabla}(\TT{u},\TT{v}_h^{\ell}-\TT{v})}
		+\Abs{\delta_{\nabla}(\TT{u}_h^{\ell}-\TT{u},\TT{v})}
		+ \Abs{ \delta_{\nabla}(\TT{u},\TT{v})}
	\end{multline*}
	\fi
	Estimating the first part using \eqref{eq:geometricQnablaError-1} and the second and third parts using \eqref{eq:geometricQnablaError-2} gives
	\begin{align*}
		\Abs{\delta_{\nabla}(\TT{u}_h^{\ell}-\TT{u},\TT{v}_h^{\ell}-\TT{v})}
		& \leqC h^{\kg}\GNorm{\TT{u}_h-\TT{u}^{e}}\GNorm{\TT{v}_h-\TT{v}^{e}}\\
		\Abs{\delta_{\nabla}(\TT{u},\TT{v}_h^{\ell}-\TT{v})}
		& \leqC h^{\kg} \GNorm{\TT{v}_h-\TT{v}^{e}}\HNormTan[1]{\TT{u}}{\K}\\
		\Abs{\delta_{\nabla}(\TT{u}_h^{\ell}-\TT{u},\TT{v})}
		& \leqC h^{\kg} \GNorm{\TT{u}_h-\TT{u}^{e}}\HNormTan[1]{\TT{v}}{\K}.
	\end{align*}
	For the last term we use either \eqref{eq:geometricQnablaError-1} and \eqref{eq:gnorm-to-h1tan} or \eqref{eq:geometricQnablaError-3} depending on the smoothness of $\TT{u}$ and $\TT{v}$ and thus obtain either
	\begin{align*}
		\Abs{ \delta_{\nabla}(\TT{u},\TT{v})}
		&\leqC h^{\kg}\HNormTan[1]{\TT{u}}{\K}\HNormTan[1]{\TT{v}}{\K},\text{ or }\\
		\Abs{ \delta_{\nabla}(\TT{u},\TT{v})}
		&\leqC h^{\kg+1}\HNormTan[2]{\TT{u}}{\K}\HNormTan[2]{\TT{v}}{\K}\,.
	\end{align*}
	Combing these estimates yields the assertions.
\end{proof}

\subsubsection{Estimates in the energy norm}\label{sec:estimates-in-the-energy-norm}

We employ the discrete perturbed Galerkin orthogonality \cref{lem:discreteGalerkinOrtho} to prove the following estimate for the error $\TT{e}\colonequals \TT{u}^{e}- \TT{u}_{h}$ in the energy norm. We split the error $\TT{e}$ in two parts, the interpolation error $\TT{e}_{I}\colonequals \TT{u}^{e}-\Ih\TT{u}^{e}$ with $\TT{I}_h=\Ih^{\ku}$ and a discrete remainder $\TT{e}_{h}\colonequals \Ih\TT{u}^{e}-\TT{u}_{h}$.
\begin{theorem}\label{thm:energy-norm-estimate}
	For the discrete solution $\TT{u}_{h}\in \Vh^{\ku}$ and the continuous solution $\TT{u}\in \HSpaceTan[k_u+1]{\K}$, we have for $h<h_0$ small enough the estimates
	\begin{align*}
		\EnergyNorm{\TT{u}^{e}-\TT{I}_h\TT{u}^{e}}{A_h}
			&\leqC\;h^{\ku}\HNormTan[\ku+1]{\TT{u}}{\K}\,, \\
		\EnergyNorm{\TT{I}_h\TT{u}^{e}-\TT{u}_{h}}{A_h}
			&\leqC\;h^{m}\left(\Norm{\TT{f}}{\K} + \HNormTan[\ku+1]{\TT{u}}{\K}\right)\,,
	\end{align*}
	with $m\colonequals \min\{\ku, \kg-1+\alpha, \kp-\alpha\}$ and hence
	\begin{align*}
		\EnergyNorm{\TT{u}^{e}- \TT{u}_{h}}{A_h}
		&\leqC\;h^{m}\left(\Norm{\TT{f}}{\K} + \HNormTan[\ku+1]{\TT{u}}{\K}\right)\,.
	\end{align*}
\end{theorem}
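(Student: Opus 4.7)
The plan is a classical Strang--type Galerkin assembly: all heavy geometric and interpolation estimates are already provided by the preceding lemmas, so what remains is mostly arithmetic combined with the perturbed discrete Galerkin orthogonality. The first inequality is nothing but the fourth estimate of \cref{lem:interpolation-error-estimates} specialised to $m=\ku$, applied to the tangential field $\TT{u}\in \HSpaceTan[\ku+1]{\K}$. The Lagrange interpolant $\TT{I}_h\TT{u}^e$ is well-defined by Sobolev embedding in dimensions $d\leq 3$.

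For the second inequality, I set $\TT{e}_h \colonequals \TT{I}_h\TT{u}^e - \TT{u}_h \in \Vh$ and exploit that $A_h$ is a symmetric positive semi-definite bilinear form on $\Vh$, so a Cauchy--Schwarz inequality in $\EnergyNorm{\cdot}{A_h}$ is available. The starting identity is
\[
\EnergyNorm{\TT{e}_h}{A_h}^2 \;=\; A_h(\TT{I}_h\TT{u}^e - \TT{u}^e,\, \TT{e}_h) \;+\; A_h(\TT{u}^e - \TT{u}_h,\, \TT{e}_h).
\]
The first summand I bound directly by $\EnergyNorm{\TT{I}_h\TT{u}^e - \TT{u}^e}{A_h}\EnergyNorm{\TT{e}_h}{A_h} \leqC h^{\ku}\HNormTan[\ku+1]{\TT{u}}{\K}\EnergyNorm{\TT{e}_h}{A_h}$ using the first part of the theorem. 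For the second summand I invoke the discrete Galerkin orthogonality \eqref{eq:GalerkinOrthoDisc}, which rewrites it as $\delta(\TT{u})(\TT{e}_h^\ell) + s_h(\TT{u}^e,\TT{e}_h)$. \Cref{lem:discreteGalerkinOrtho} then controls both pieces: the geometric consistency contributes a factor $h^{\kg-1+\alpha}\EnergyNorm{\TT{e}_h}{A_h}\big(\HNormTan[1]{\TT{u}}{\K}+\Norm{\TT{f}}{\K}\big)$ and the penalty consistency contributes $h^{\kp-\alpha}\EnergyNorm{\TT{e}_h}{A_h}\Norm{\TT{u}}{\K}$. Summing the three contributions and dividing through by $\EnergyNorm{\TT{e}_h}{A_h}$ yields the claimed order $h^{m}$ with $m=\min\{\ku,\kg-1+\alpha,\kp-\alpha\}$.

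The combined estimate then follows by the triangle inequality
\[
\EnergyNorm{\TT{u}^e - \TT{u}_h}{A_h} \;\leq\; \EnergyNorm{\TT{u}^e - \TT{I}_h\TT{u}^e}{A_h} \;+\; \EnergyNorm{\TT{I}_h\TT{u}^e - \TT{u}_h}{A_h},
\]
since $\ku \geq m$ and hence both terms are of order $h^{m}$. There is no genuine obstacle left in this theorem; the one conceptual point worth noting is that the perturbed Galerkin orthogonality cleanly separates the geometric error $\delta(\TT{u})$ from the penalty consistency error $s_h(\TT{u}^e,\cdot)$, which is precisely the design motivation behind $A_h$ and the reason the two $\alpha$-dependent exponents $\kg-1+\alpha$ and $\kp-\alpha$ show up in a balanced way.
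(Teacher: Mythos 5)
Your proposal is correct and follows essentially the same route as the paper: interpolation estimate for the first bound, then the splitting $A_h(\TT{e}_h,\TT{e}_h)=A_h(\TT{I}_h\TT{u}^e-\TT{u}^e,\TT{e}_h)+\delta(\TT{u})(\TT{e}_h^\ell)+s_h(\TT{u}^e,\TT{e}_h)$ via \eqref{eq:GalerkinOrthoDisc}, Cauchy--Schwarz, \cref{lem:discreteGalerkinOrtho}, division by $\EnergyNorm{\TT{e}_h}{A_h}$, and the triangle inequality. The only nit is that the energy-norm interpolation bound is the third (not fourth) estimate in \cref{lem:interpolation-error-estimates}; the fourth is the $\GNorm{\cdot}$ bound.
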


\begin{proof}
		The first estimate follows directly by interpolation error estimates \eqref{lem:interpolation-error-estimates}.
For the discrete remainder $\TT{e}_{h}=\Ih\TT{u}^{e}-\TT{u}_{h}$, we can use \eqref{eq:GalerkinOrthoDisc} and \cref{lem:discreteGalerkinOrtho} to obtain
	\begin{align*}
		\EnergyNorm{\TT{e}_{h}}{A_h}^2
		&=A_{h}(\Ih\TT{u}^{e}-\TT{u}_{h},\TT{e}_{h})\\
		&=A_{h}(\Ih\TT{u}^{e}-\TT{u}^{e},\TT{e}_{h}) + \delta(\TT{u})(\TT{e}_{h}^{\ell}) + s_h(\TT{u}^e, \TT{e}_h) \\
		&\leq \EnergyNorm{\TT{e}_{I}}{A_h}\EnergyNorm{\TT{e}_{h}}{A_h} + \Abs{\delta(\TT{u})(\TT{e}_{h}^{\ell})} + \Abs{s_h(\TT{u}^e, \TT{e}_h)} \\
		&\leqC (h^{\ku}+h^{\kg-1+\alpha}+h^{\kp-\alpha})\left(\Norm{\TT{f}}{\K} + \HNormTan[k_u+1]{\TT{u}}{\K}\right)
		\EnergyNorm{\TT{e}_{h}}{A_h}.
	\end{align*}
	Division by $\EnergyNorm{\TT{e}_{h}}{A_h}$ yields the assertion.
\end{proof}

\subsubsection{Improved estimate in tangential norms}\label{sec:improved-estimates-in-tangential-norms}
We obtain estimates for the tangential part of $\TT{e}$ by proceeding analogously to the proof of \cref{thm:energy-norm-estimate} for the continuous energy norm $\EnergyNorm{\cdot}{a}$. In order to really obtain an improvement relative to the discrete energy norm estimate, we employ the super-approximation result \cref{lem:superApproximation} to obtain an estimate on the normal part of the interpolation of the test vector $\PP\TT{e}$ that translates into a better estimate for the penalty term in particular.

\begin{theorem}\label{thm:better-h1-estimate}
	We have for $h<h_0$ small enough
	\begin{align}\label{eq:better-h1-estimate-1}
		\HNormTan{\PP(\TT{u} - \TT{u}_h^{\ell})}{\K}
		&\leqC  h^{\hat m}\left(\Norm{\TT{f}}{\K} + \HNormTan[k_u+1]{\TT{u}}{\K}\right)\,
	\end{align}
	with $\hat m =\min\{\ku,\kg,\kp+\kg-2\alpha, \kg -1 +2\alpha\}$.
\end{theorem}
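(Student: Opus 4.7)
The plan is to repeat the argument of \cref{thm:energy-norm-estimate} but now in the continuous energy norm $\EnergyNorm{\cdot}{a}$, exploiting the identity $\EnergyNorm{\TT{v}}{a}^2 = \Norm{\CDer\PP\TT{v}}{\K}^2 + \Norm{\PP\TT{v}}{\K}^2 = \HNormTan{\PP\TT{v}}{\K}^2$ so that the left-hand side of the claim is exactly $\EnergyNorm{\TT{u} - \TT{u}_h^\ell}{a}$. As before I would split $\TT{u}^e - \TT{u}_h = \TT{e}_I + \TT{e}_h$ with $\TT{e}_I = \TT{u}^e - \Ih\TT{u}^e$ and $\TT{e}_h = \Ih\TT{u}^e - \TT{u}_h$. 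The interpolation part is immediately bounded by $h^{\ku}\HNormTan[\ku+1]{\TT{u}}{\K}$ via \cref{lem:interpolation-error-estimates}, so all the work is concentrated on controlling $\EnergyNorm{\TT{e}_h^\ell}{a}$.

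Starting from $\EnergyNorm{\TT{e}_h^\ell}{a}^2 = a(\TT{e}_h^\ell, \PP\TT{e}_h^\ell)$, I would test against the discrete field $\TT{w}_h \colonequals \Ih \PP\TT{e}_h \in \Vh$ and split
\begin{align*}
a(\TT{e}_h^\ell, \PP\TT{e}_h^\ell) = a\bigl(\TT{e}_h^\ell, (\PP\TT{e}_h - \TT{w}_h)^\ell\bigr) + a(\TT{e}_h^\ell, \TT{w}_h^\ell).
\end{align*}
The first term is handled by Cauchy--Schwarz together with the super-approximation estimate \eqref{eq:super-approx-2}, which yields a factor $h$ against a broken ambient norm of $\TT{e}_h$, itself controlled through \cref{thm:energy-norm-estimate}. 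For the second term I would write $a(\TT{e}_h^\ell, \TT{w}_h^\ell) = a(\TT{u} - \TT{u}_h^\ell, \TT{w}_h^\ell) - a(\TT{e}_I^\ell, \TT{w}_h^\ell)$ and apply the continuous perturbed Galerkin orthogonality \eqref{eq:GalerkinOrthoCont} to the first piece to obtain $\delta(\TT{u}_h^\ell)(\TT{w}_h^\ell) + s_h(\TT{u}_h, \TT{w}_h)$. The $\delta$-term is estimated via \cref{lem:contGalerkinOrtho}, which is the source of the $h^{\kg}$ exponent.

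The main obstacle is the penalty contribution $s_h(\TT{u}_h, \TT{w}_h) = \beta h^{-2\alpha}\Inner{\QQhTilde\TT{u}_h}{\QQhTilde\TT{w}_h}{\Kh}$, where both factors must be tightly controlled to overcome the negative power $h^{-2\alpha}$. For $\Norm{\QQhTilde\TT{u}_h}{\Kh}$ I would decompose $\TT{u}_h = \TT{u}^e + (\TT{u}_h - \TT{u}^e)$: tangentiality of $\TT{u}$ and \eqref{eq:better-normal} give $\Norm{\QQhTilde\TT{u}^e}{\Kh} \leqC h^{\kp}$, while the remainder is controlled by the penalty component of the discrete energy norm, itself bounded through \cref{thm:energy-norm-estimate} by $h^{\alpha+m}$. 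For $\Norm{\QQhTilde\TT{w}_h}{\Kh}$ the key gain is produced by the super-approximation estimate \eqref{eq:super-approx-1} applied to the tangential field $\PP\TT{e}_h^\ell$, combined with $\LInfNorm{\QQhTilde - \QQ}{\Kh} \leqC h^{\kp}$: this bounds the normal component of the interpolant by a product of small $h$-powers and $\EnergyNorm{\TT{e}_h^\ell}{a}$.

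Collecting all contributions and absorbing the resulting factors of $\EnergyNorm{\TT{e}_h^\ell}{a}$ into the left-hand side after Cauchy--Schwarz, the four competing exponents that appear are $\ku$ (from the interpolation of $\TT{u}$), $\kg$ (from the geometric defect $\delta$), $\kp + \kg - 2\alpha$ (product of the two $h^{\kp}$-bounds for $\QQhTilde$ combined with $h^{-2\alpha}$ and a further $h^{\kg}$ coming from the super-approximation step), and $\kg - 1 + 2\alpha$ (arising when the energy-estimate order $\alpha + m \ge \kg-1+2\alpha$ for $\Norm{\QQhTilde\TT{u}_h}{\Kh}$ is paired with $h^{-2\alpha}$ in the penalty term). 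Taking the minimum yields exactly $\hat m$. The delicate part is the careful bookkeeping of these four competing contributions and verifying that $\PP\TT{e}_h$ satisfies the continuity requirements needed to apply the super-approximation lemmas globally on $\Gh$.
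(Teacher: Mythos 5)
Your proposal follows essentially the same route as the paper: identify the tangential $H^1$-norm with the continuous energy norm, test with the Lagrange interpolant of the projected error, invoke the continuous perturbed Galerkin orthogonality together with \cref{lem:contGalerkinOrtho} for the geometric defect, and control the penalty term via super-approximation of the normal part of the test function and the discrete energy estimate for $\Norm{\QQhTilde\TT{u}_h}{\Kh}$. One bookkeeping remark: the exponent $\kg-1+2\alpha$ actually originates from your first term, the test-function approximation error (the factor $h$ from super-approximation times $h^{m-1+\alpha}$ for the broken ambient $H^1$-norm of the discrete remainder, evaluated at $m=\kg-1+\alpha$), and not from the penalty term as stated in your final paragraph --- the penalty contribution you describe carries an extra factor of at least $h^{\min\{2,\kg\}}$ from the normal part of the test function and is therefore always of order at least $h^{\kg}$, hence never binding.
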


\begin{proof}
	First, we note that the tangential $H^1$-norm corresponds to the continuous energy norm, i.e., $\HNormTan{\PP(\TT{u} - \TT{u}_h^{\ell})}{\K}=\EnergyNorm{\TT{e}^{\ell}}{a}$. Furthermore, interpolation of the projected error $\Ih\PP\TT{e}=\Ih^{\ku}\PP\TT{e}^{\ell}\in \Vh^{\ku}$ is well-defined and can also be written as $\Ih\PP\TT{e} = \Ih\TT{u^{e}}-\Ih\PP\TT{u}_{h}$. We will use this as a test tensor field to obtain the estimate in the continuous energy norm.
	The perturbed Galerkin orthogonality \eqref{eq:GalerkinOrthoCont} for the continuous bilinear form allows us to write
	\begin{align*}
		\EnergyNorm{\TT{e}^{\ell}}{a}^2
		&= a(\TT{e}^{\ell}, \TT{e}^{\ell} - (\Ih\PP\TT{e})^{\ell}) + \delta(\TT{u}_h^{\ell})(\Ih\PP\TT{e}) + s_h(\TT{u}_h, \Ih\PP\TT{e}) \\
		&\leq \EnergyNorm{\TT{e}^{\ell}}{a} \EnergyNorm{\TT{e}^{\ell} - (\Ih\PP\TT{e})^{\ell}}{a}	+ \Abs{\delta(\TT{u}_h^{\ell})(\Ih\PP\TT{e})} + \Abs{s_h(\TT{u}_h, \Ih\PP\TT{e})}.
	\end{align*}
	We begin by estimating $\EnergyNorm{\TT{e}^{\ell} - (\Ih\PP\TT{e})^{\ell}}{a}$.
	As in \cref{sec:improved-estimates-in-tangential-norms} we split the error into $\TT{e} = \TT{e}_{I} + \TT{e}_{h}$. Then
	\begin{align*}
		\EnergyNorm{\TT{e}^{\ell} - (\Ih\PP\TT{e})^{\ell}}{a}
		&\leq \EnergyNorm{\TT{e}_{I}^{\ell} - (\Ih\PP\TT{e}_{I})^{\ell}}{a}
		+ \EnergyNorm{\TT{e}^{\ell}_{h} - (\Ih\PP\TT{e}_{h})^{\ell}}{a}.
	\end{align*}
	By \cref{lem:interpolation-error-estimates-amb} and \cref{lem:gradient-estimate} we have
	\begin{align}\label{eq:PeI-IhPeI}
		\EnergyNorm{\TT{e}_{I}^{\ell} - (\Ih\PP\TT{e}_{I})^{\ell}}{a}
&\leqC\HNormAmb{\PP\TT{e}_{I}^{\ell} - (\Ih\PP\TT{e}_{I})^{\ell}}{\K} \leqC h \HNormAmb[2]{\PP\TT{e}_{I}^{\ell}}{\K} \leqC h^{\ku}\HNormTan[\ku+1]{\TT{u}}{\K}.
	\end{align}
	Then, using \cref{lem:h1-estimates}, $L^2$-norm equivalence and \eqref{eq:disc-h1-bounds-2} we obtain
	\begin{align*}
		\EnergyNorm{\TT{e}_{h}}{a_h} & \leqC \EnergyNorm{\TT{e}_{h}^{\ell}}{a} + h^{\kg}\GNorm{\TT{e}_{h}}\leqC \EnergyNorm{\TT{e}_{h}^{\ell}}{a} + h^{\kg-1}\Norm{\QQh\TT{e}_h}{\Kh} + h^{\kg}\EnergyNorm{\TT{e}_{h}}{a_h}.
	\end{align*}
	For $h$ small enough, we can absorb the last term into the left hand side. Using the estimate  $\EnergyNorm{\TT{e}^{\ell}_{I}}{a}\leqC h^{\ku}\HNormTan[\ku+1]{\TT{u}}{\K}$, we get
	\begin{align*}
		\EnergyNorm{\TT{e}_{h}}{a_h} &
		\leqC \EnergyNorm{\TT{e}_{h}^{\ell}}{a} + h^{\kg-1}\Norm{\QQh\TT{e}_h}{\Kh}
		\leqC \EnergyNorm{\TT{e}^{\ell}}{a} + h^{\ku+1}\HNormTan[\ku+1]{\TT{u}}{\K} + h^{\kg-1}\Norm{\QQh\TT{e}_h}{\Kh}
	\end{align*}
	Thus, we can use \eqref{eq:super-approx-2-modified}, \eqref{eq:disc-h1-bounds-1} and \cref{thm:energy-norm-estimate} to estimate
	\begin{align}
		\EnergyNorm{\TT{e}^{\ell}_{h} - (\Ih\PP\TT{e}_{h})^{\ell}}{a}
& \leqC \HNormAmb{\PP\TT{e}_{h}^{e} - (\Ih\PP\TT{e}_{h})^{e}}{\K} \notag\\
		&\leqC h \EnergyNorm{\TT{e}_{h}}{a_h} + \Norm{\QQh\TT{e}_h}{\Kh} \notag\\
		&\leqC  h \EnergyNorm{\TT{e}^{\ell}}{a} + h^{m+\alpha}\left(\Norm{\TT{f}}{\K} + \HNormTan[\ku+1]{\TT{u}}{\K}\right). \label{eq:Peh-IhPeh}
	\end{align}
So far we have shown that
	\begin{align*}
		\EnergyNorm{\TT{e}^{\ell}}{a}^2
		&\leq \EnergyNorm{\TT{e}^{\ell}}{a} \EnergyNorm{\TT{e}^{\ell} - (\Ih\PP\TT{e})^{\ell}}{a}	+ \Abs{\delta(\TT{u}_h^{\ell})(\Ih\PP\TT{e})} + \Abs{s_h(\TT{u}_h, \Ih\PP\TT{e})}\\
		&\leqC h \EnergyNorm{\TT{e}^{\ell}}{a}^2
		+ (h^{m+\alpha}+h^{\ku})\left(\Norm{\TT{f}}{\K} + \HNormTan[\ku+1]{\TT{u}}{\K}\right)\EnergyNorm{\TT{e}^{\ell}}{a}\\
		&\quad
		+ \Abs{\delta(\TT{u}_h^{\ell})(\Ih\PP\TT{e})} + \Abs{s_h(\TT{u}_h, \Ih\PP\TT{e})}.
	\end{align*}
	For $h$ small enough, we can absorb the first term into the left hand side.

	To estimate $\delta$ we interpret $\Ih\PP\TT{e}$ as an approximation of the $\TT{0}$ tensor field and use \cref{lem:contGalerkinOrtho} to estimate
	\begin{align*}
		\Abs{\delta(\TT{u}_h^{\ell})(\Ih\PP\TT{e})}
		&
		\leqC h^{\kg} \GNorm{\Ih\PP\TT{e}}\left(\Norm{\TT{f}}{\K} + \HNormTan[1]{\TT{u}}{\K} +\GNorm{\TT{u}^{e}-\TT{u}_h}\right).
	\end{align*}
For the last term in the brackets we estimate using \cref{lem:interpolation-error-estimates}, \eqref{eq:disc-h1-bounds-2}\EN{}{,} and \cref{thm:energy-norm-estimate}
	\begin{align}
		\GNorm{\TT{u}^{e}-\TT{u}_h}
		&\leq \GNorm{\TT{u}^{e}-\Ih\TT{u}^{e}}+\GNorm{\Ih\TT{u}^{e}-\TT{u}_h} \notag\\
		&\leqC h^{\ku}\HNormTan[\ku+1]{\TT{u}}{\K} +  h^{-1+\alpha}\EnergyNorm{\Ih\TT{u}^{e}-\TT{u}_h}{A_h} \notag \\
		&\leqC h^{m-1+\alpha}\left(\Norm{\TT{f}}{\K} + \HNormTan[\ku+1]{\TT{u}}{\K}\right). \label{eq:gnorm-estimate}
	\end{align}
	Thus, we obtain
	\begin{align*}
		\Abs{\delta(\TT{u}_h^{\ell})(\Ih\PP\TT{e})}
		&
		\leqC (h^{\kg}+h^{m+\kg-1+\alpha}) \GNorm{\Ih\PP\TT{e}}\left(\Norm{\TT{f}}{\K} + \HNormTan[\ku+1]{\TT{u}}{\K}\right).
	\end{align*}
	Note that by \eqref{eq:gnorm-to-h1} we get
	\begin{align*}
		\GNorm{\Ih\PP\TT{e}}
		&\leqC \HNormAmb[1]{\Ih\PP\TT{e}}{\Kh}
		 + h^{-1}\Norm{\QQh\Ih\PP\TT{e}}{\Kh}.
	\end{align*}
	For the penalty term $\Abs{s_h}$ we find
	\begin{align*}
		\Abs{s_h(\TT{u}_h, \Ih\PP\TT{e})}
		&\leqC h^{-2\alpha} \Norm{\QQhTilde\TT{u}_h}{\Kh} \Norm{\QQhTilde \Ih\PP\TT{e}}{\Kh}\\
		&\leqC h^{-\alpha}\left(\EnergyNorm{\TT{e}}{A_h} + h^{\kp-\alpha}\Norm{\TT{u}}{\K}\right) \Norm{\QQhTilde \Ih\PP\TT{e}}{\Kh}\\
		&\leqC h^{m-\alpha}\left(\Norm{\TT{f}}{\K} + \HNormTan[k_u+1]{\TT{u}}{\K}\right) \Norm{\QQhTilde \Ih\PP\TT{e}}{\Kh}
	\intertext{with}
		\Norm{\QQhTilde \Ih\PP\TT{e}}{\Kh}
		&\leqC \Norm{\QQh \Ih\PP\TT{e}}{\Kh} + h^{\kg}\Norm{\Ih\PP\TT{e}}{\Kh}.
	\end{align*}
	We now consider $\Norm{\QQh\Ih\PP\TT{e}}{\Kh}$. We use \cref{lem:superApproximation} to estimate
	\begin{align*}
		\Norm{\QQh\Ih\PP\TT{e}}{\Kh}
		& \leqC \Norm{\QQ\Ih\PP\TT{e}}{\Kh} + h^{\kg} \Norm{\Ih\PP\TT{e}}{\Kh}\\
		&\leqC (h^2+h^{\kg}) \HNormAmb[1]{\Ih\PP\TT{e}^{\ell}}{\Kh}.
	\end{align*}
	Thus we have shown
	\begin{align*}
		\Abs{\delta(\TT{u}_h^{\ell})(\Ih\PP\TT{e})}
		&\leqC (h^{\kg}+h^{m+\kg-1+\alpha})\left(\Norm{\TT{f}}{\K} + \HNormTan[\ku+1]{\TT{u}}{\K}\right)\HNormAmb[1]{\Ih\PP\TT{e}}{\Kh},\text{ and }\\
		\Abs{s_h(\TT{u}_h, \Ih\PP\TT{e})}
		&\leqC (h^{m-\alpha+2}+h^{m-\alpha+\kg})\left(\Norm{\TT{f}}{\K} + \HNormTan[k_u+1]{\TT{u}}{\K}\right)\HNormAmb[1]{\Ih\PP\TT{e}}{\Kh}.
	\end{align*}
	We further estimate using $L^2$-norm equivalence, \cref{lem:gradient-estimate}, \eqref{eq:PeI-IhPeI}\EN{}{,} and \eqref{eq:Peh-IhPeh} to obtain
	\begin{align*}
		\HNormAmb[1]{\Ih\PP\TT{e}}{\Kh}
			&\leq \HNormAmb[1]{\PP\TT{e}}{\Kh} + \HNormAmb[1]{\Ih\PP\TT{e} - \PP\TT{e}}{\Kh} \\
			&\leqC \EnergyNorm{\TT{e}^\ell}{a} + (h^{m+\alpha} + h^{\ku})\left(\Norm{\TT{f}}{\K} + \HNormTan[k_u+1]{\TT{u}}{\K}\right)\,.
	\end{align*}

	Combining all the terms and inserting the definition of $m$ results in the final form
\begin{align*}
		\EnergyNorm{\TT{e}^{\ell}}{a}
		&\leqC (h^{\ku}+h^{\kg}
		+h^{\kg-1+2\alpha}
		+h^{\kp+\kg-2\alpha}
		)\left(\Norm{\TT{f}}{\K} + \HNormTan[\ku+1]{\TT{u}}{\K}\right).
	\end{align*}
\end{proof}

\begin{corollary}\label{cor:ah-estimate}
	We have for $h<h_0$ small enough
	\begin{align}\label{eq:ah-estimate-1}
		\EnergyNorm{\TT{u}^{e} - \TT{u}_h}{a_h}
		&\leqC  h^{\hat m}\left(\Norm{\TT{f}}{\K} + \HNormTan[k_u+1]{\TT{u}}{\K}\right)\,
	\end{align}
	with $\hat m$ as in \cref{thm:better-h1-estimate}.
\end{corollary}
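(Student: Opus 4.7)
The plan is to reduce this corollary to \cref{thm:better-h1-estimate} via triangle-inequality arguments, using the comparison estimates between discrete operators $\ChDer\PPh$, $\PPh$ on $\Gh$ and continuous operators $\CDer\PP$, $\PP$ on $\G$ established earlier. Setting $\TT{e} = \TT{u}^e - \TT{u}_h$, I will expand
\[
  \EnergyNorm{\TT{e}}{a_h}^2 = \Norm{\ChDer\PPh\TT{e}}{\Kh}^2 + \Norm{\PPh\TT{e}}{\Kh}^2
\]
and bound each summand separately by $h^{\hat m}(\Norm{\TT{f}}{\K}+\HNormTan[\ku+1]{\TT{u}}{\K})$.

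For the gradient part, I insert the intermediate quantity $(\CDer\PP\TT{e}^{\ell})^e$ via the triangle inequality. The first piece reduces via $L^2$-norm equivalence (\cref{lem:l2-norm-equiv}) to $\Norm{\CDer\PP\TT{e}^{\ell}}{\K} \leq \HNormTan{\PP\TT{e}^{\ell}}{\K}$, which is precisely the quantity bounded by $h^{\hat m}(\cdots)$ in \cref{thm:better-h1-estimate}. The remainder $\Norm{(\CDer\PP\TT{e}^\ell)^{e}-\ChDer\PPh\TT{e}}{\Kh}$ is controlled using \cref{lem:h1-estimates} by $h^{\kg}\GNorm{\TT{e}}$, and then \eqref{eq:gnorm-estimate} converts this into $h^{\kg+m-1+\alpha}$ times the data.

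For the mass part, I proceed analogously: the triangle inequality with $\PP\TT{e}$ on $\Kh$ splits it into $\Norm{\PP\TT{e}}{\Kh} \lesssim \Norm{\PP\TT{e}^{\ell}}{\K} \lesssim h^{\hat m}(\cdots)$ by $L^2$-equivalence and \cref{thm:better-h1-estimate}, together with $\Norm{(\PP-\PPh)\TT{e}}{\Kh} \lesssim h^{\kg}\Norm{\TT{e}}{\Kh} \lesssim h^{\kg}\GNorm{\TT{e}} \lesssim h^{\kg+m-1+\alpha}(\cdots)$, using \cref{lem:P} and again \eqref{eq:gnorm-estimate}.

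The only non-routine step is the arithmetic check that the residual exponent dominates, i.e.\ $\kg+m-1+\alpha \geq \hat m$, since only then do the remainder contributions get absorbed into the target rate. This follows by a short case analysis on which of the three terms realizes $m = \min\{\ku,\kg-1+\alpha,\kp-\alpha\}$: if $m=\ku$, then $\hat m \leq \ku$ and $\kg + \ku - 1 + \alpha \geq \ku$ since $\kg \geq 1$; if $m=\kg-1+\alpha$, then $\hat m \leq \kg-1+2\alpha$ and $2\kg-2+2\alpha \geq \kg-1+2\alpha$ since $\kg \geq 1$; if $m=\kp-\alpha$, then $\hat m \leq \kg$ and $\kg+\kp-1 \geq \kg$ since $\kp \geq 1$. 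No new analytical estimate beyond those already proven is needed.
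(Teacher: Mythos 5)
Your proposal is correct and follows essentially the same route as the paper: the paper's proof is the one-line chain $\EnergyNorm{\TT{e}}{a_h}\leqC\EnergyNorm{\TT{e}}{a}+h^{\kg}\GNorm{\TT{e}}$ via \cref{lem:h1-estimates}, then \eqref{eq:gnorm-estimate} and \cref{thm:better-h1-estimate}, which is exactly your decomposition written out term by term. Your explicit case analysis verifying $\kg+m-1+\alpha\geq\hat m$ is a correct (and welcome) elaboration of a step the paper leaves implicit.
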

\begin{proof}
	This follows from \cref{thm:better-h1-estimate} using \cref{lem:h1-estimates} and \eqref{eq:gnorm-estimate} to estimate
	\begin{align*}
		\EnergyNorm{\TT{e}}{a_h}
		&\leqC \EnergyNorm{\TT{e}}{a} + h^{\kg}\GNorm{\TT{e}}
		\leqC \EnergyNorm{\TT{e}}{a} + h^{\kg+m-1+\alpha} \left(\Norm{\TT{f}}{\K} + \HNormTan[k_u+1]{\TT{u}}{\K}\right).
	\end{align*}
\end{proof}

\subsubsection{Estimate in the $L^2$-norm}\label{sec:estimate-in-the-l2-norm}
We consider the $L^2$-norm estimates separately for the tangential part $\PP\TT{e}$ and the normal part $\QQ\TT{e}$ of the error. For the tangential part we can employ an Aubin-Nitsche type argument in order to show an improvement from the estimate in the energy norm. Whether there is an improvement in the normal part as well depends on the choice of the penalty parameter $\alpha$.

In order to use the Aubin-Nitsche trick, we introduce the dual problem: Find $\TT{\psi}\in \HSpaceTan[1]{\G}$, s.t.
\begin{align}\label{eq:dual-problem}
	a(\TT{v},\TT{\psi})=\Inner{\TT{v}}{\PP\TT{e}^{\ell}}{\Gamma},\qquad \forall \TT{v}\in \HSpaceTan[1]{\G}.
\end{align}
\begin{lemma}\label{lem:aubin-nietsche-regularity}
	Assume that $\G\in C^2$. For a solution $\TT{\psi}\in\HSpaceTan[2]{\G}$ of \eqref{eq:dual-problem}, we have the $H^2$-regularity property
	\[
		\HNormTan[2]{\TT{\psi}}{\G}\leqC \Norm{\PP \TT{e}^{\ell}}{\G}\,.
	\]
\end{lemma}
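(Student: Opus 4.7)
My plan is to combine the elementary energy estimate for the dual problem~\eqref{eq:dual-problem} with classical elliptic regularity for the connection-Laplacian on a compact surface. First, I would test~\eqref{eq:dual-problem} against $\TT{v}=\TT{\psi}$ itself, which yields
\[
  \HNormTan[1]{\TT{\psi}}{\G}^{2} = a(\TT{\psi},\TT{\psi}) = \Inner{\TT{\psi}}{\PP\TT{e}^{\ell}}{\G} \leq \Norm{\TT{\psi}}{\G}\,\Norm{\PP\TT{e}^{\ell}}{\G},
\]
and hence $\HNormTan[1]{\TT{\psi}}{\G}\leqC \Norm{\PP\TT{e}^{\ell}}{\G}$ by Cauchy--Schwarz and dividing through.

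Next, I would reinterpret~\eqref{eq:dual-problem} as the variational form of the strong equation $(\Id-\bm{\Delta}_{\G})\TT{\psi}=\PP\TT{e}^{\ell}$ on $\G$, valid since the dual right-hand side lies in $\LSpace[2]{\G,\TensorBundleTan}$. Because $\G$ is compact, closed, and of class $C^{2}$, the connection-Laplacian $\bm{\Delta}_{\G}$ is a formally self-adjoint, uniformly elliptic second-order operator on rank-$n$ tensor fields whose principal symbol reduces componentwise to that of the scalar Laplace--Beltrami operator. Classical global elliptic regularity on a closed manifold, applied to $(\Id-\bm{\Delta}_{\G})\TT{\psi}=\PP\TT{e}^{\ell}$, then gives
\[
  \HNormTan[2]{\TT{\psi}}{\G} \leqC \Norm{(\Id-\bm{\Delta}_{\G})\TT{\psi}}{\G}+\HNormTan[1]{\TT{\psi}}{\G} \leqC \Norm{\PP\TT{e}^{\ell}}{\G},
\]
where the final inequality uses the $H^{1}$-bound from the first step together with the strong-form identity $\Norm{(\Id-\bm{\Delta}_{\G})\TT{\psi}}{\G}=\Norm{\PP\TT{e}^{\ell}}{\G}$.

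The main obstacle is justifying the tensorial elliptic regularity bound, since this paper only develops the first-order framework for $\CDer$. I would either cite the standard result for strongly elliptic, formally self-adjoint second-order operators acting on sections of a vector bundle over a compact $C^{2}$ manifold, or prove it by a partition-of-unity reduction to local coordinate charts. In such a chart, each Euclidean component $\psi^{i_{1}\ldots i_{n}}$ satisfies a scalar elliptic equation whose lower-order terms, arising from commuting $\CDer$ twice and from the Weingarten map (cf.\ \cref{lem:pdq,lem:dP}), are bounded uniformly by the curvature data of $\G$; a standard difference-quotient argument then delivers the scalar $H^{2}$-bound componentwise, and a final application of \cref{lem:gradient-estimate} repackages these scalar bounds into the tangential Sobolev norm $\HNormTan[2]{\TT{\psi}}{\G}$.
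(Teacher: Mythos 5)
Your proposal is correct and follows essentially the same route as the paper, which simply cites classical elliptic regularity for linear elliptic systems in local coordinates (via \citet{DE2013Finite} and \cite[Theorem 6.29]{Wa1983Foundations}). Your additional preliminary $H^1$ energy estimate and the sketch of the partition-of-unity reduction are a more explicit version of exactly that argument.
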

\begin{proof}
	Similar to the argumentation by \citet{DE2013Finite}, the regularity result follows from the theory of elliptic systems of linear PDEs on Cartesian domains $\Omega\subset\mathbb{R}^d$ using local coordinates, see, e.g., \cite[Theorem 6.29]{Wa1983Foundations}.
\end{proof}

\begin{theorem}\label{thm:l2-norm-estimate}
	For the discrete solution $\TT{u}_{h}\in \Vh^{\ku}$ and the continuous solution $\TT{u}\in \HSpaceTan[k_u+1]{\K}$, we have for $h<h_0$ small enough and $\kp > \alpha$
	\begin{align*}
		\Norm{\QQ(\TT{u} - \TT{u}_h^{\ell})}{\K}
			&\leqC h^{\alpha}\EnergyNorm{\TT{u}^e - \TT{u}_h}{A_h}\\
			&\leqC  h^{ m + \alpha}\left(\Norm{\TT{f}}{\K} + \HNormTan[k_u+1]{\TT{u}}{\K}\right)\,,\\
		\Norm{\PP(\TT{u} - \TT{u}_h^{\ell})}{\K}
			&\leqC h \HNormTan{\PP(\TT{u} - \TT{u}_h^{\ell})}{\K} +	h^{\tilde{m}}\left(\Norm{\TT{f}}{\K} + \HNormTan[\ku+1]{\TT{u}}{\K}\right) \\
			&\leqC  h^{\min\{\hat m +1,\tilde{m}\}}\left(\Norm{\TT{f}}{\K} + \HNormTan[k_u+1]{\TT{u}}{\K}\right)\,,
	\end{align*}
	with $\tilde{m}\colonequals \min\{\kg+1, 2m,m+3-\alpha\}$, $m$ as in \cref{thm:energy-norm-estimate}\EN{}{,} and $\hat{m}$ as in \cref{thm:better-h1-estimate}.
\end{theorem}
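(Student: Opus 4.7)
Since $\TT{u}$ is tangential on $\G$ and its extension is constant along the normal while $\QQ$ is defined through the extended normal, $\QQ\TT{u}^e\equiv 0$ on $U_\delta$, so $\QQ\TT{e}^\ell = -\QQ\TT{u}_h^\ell$. By the $L^2$-norm equivalence of \cref{lem:l2-norm-equiv} it suffices to estimate $\Norm{\QQ\TT{u}_h}{\Kh}$. I split $\QQ = (\QQ - \QQhTilde) + \QQhTilde$: by \eqref{eq:better-normal} the first piece contributes $h^{\kp}\Norm{\TT{u}_h}{\Kh}$, and by construction $\Norm{\QQhTilde\TT{u}_h}{\Kh} = \beta^{-1/2}h^{\alpha}\EnergyNorm{\TT{u}_h}{s_h}$. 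The triangle inequality $\EnergyNorm{\TT{u}_h}{s_h} \leq \EnergyNorm{\TT{u}^e - \TT{u}_h}{s_h} + \EnergyNorm{\TT{u}^e}{s_h}$ together with $\Norm{\QQhTilde\TT{u}^e}{\Kh}\leqC h^{\kp}\Norm{\TT{u}}{\K}$ (a consequence of $\QQ\TT{u}^e = 0$ and \eqref{eq:better-normal}) yields the first claimed inequality. The order $h^{m+\alpha}$ then follows by inserting \cref{thm:energy-norm-estimate}, the hypothesis $\kp > \alpha$ ensuring that $\kp \geq m+\alpha$ so the $h^{\kp}$ remainder is absorbed.

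\textbf{Tangential part via Aubin-Nitsche.} I test the dual problem \eqref{eq:dual-problem} against $\TT{v} = \PP\TT{e}^\ell\in\HSpaceTan[1]{\G}$, giving $\Norm{\PP\TT{e}^\ell}{\G}^2 = a(\PP\TT{e}^\ell,\TT{\psi}) = a(\TT{e}^\ell,\TT{\psi})$. Inserting $\pm(\Ih\TT{\psi}^e)^\ell$ and applying the continuous perturbed Galerkin orthogonality \eqref{eq:GalerkinOrthoCont} with discrete test $\Ih\TT{\psi}^e\in\Vh$ decomposes the right-hand side into an interpolation piece $a(\TT{e}^\ell,\TT{\psi} - (\Ih\TT{\psi}^e)^\ell)$, a geometric-consistency piece $\delta(\TT{u}_h^\ell)((\Ih\TT{\psi}^e)^\ell)$, and a penalty piece $s_h(\TT{u}_h,\Ih\TT{\psi}^e)$. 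The first is handled by Cauchy-Schwarz, the interpolation estimate of \cref{lem:interpolation-error-estimates}, and the regularity $\HNormTan[2]{\TT{\psi}}{\K}\leqC\Norm{\PP\TT{e}^\ell}{\K}$ of \cref{lem:aubin-nietsche-regularity}, yielding the $h\HNormTan{\PP\TT{e}^\ell}{\K}\Norm{\PP\TT{e}^\ell}{\K}$ contribution visible in the statement. The $\delta$-piece is treated by the $\HSpaceTan[2]$-version of \cref{lem:contGalerkinOrtho} applied with $\TT{u}$ and $\TT{v}=\TT{\psi}$, combining the interpolation bound $\GNorm{\TT{\psi}^e - \Ih\TT{\psi}^e}\leqC h\HNormTan[2]{\TT{\psi}}{\K}$ with the previously established estimate \eqref{eq:gnorm-estimate} for $\GNorm{\TT{u}^e - \TT{u}_h}$; regularity of $\TT{\psi}$ then supplies the $h^{\kg+1}$ entry of $\tilde m$.

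\textbf{The main obstacle: the penalty term.} The remaining $s_h$-piece is where the other two entries of $\tilde m$ come from and is the subtlest part of the argument. Factoring $|s_h(\TT{u}_h,\Ih\TT{\psi}^e)|\leq \beta h^{-2\alpha}\Norm{\QQhTilde\TT{u}_h}{\Kh}\Norm{\QQhTilde\Ih\TT{\psi}^e}{\Kh}$, the first factor has already been bounded by $h^{m+\alpha}(\Norm{\TT{f}}{\K}+\HNormTan[\ku+1]{\TT{u}}{\K})$ in the normal-part analysis. For the second factor I would combine two complementary estimates: a super-approximation argument via \cref{lem:superApproximation} applied to $\QQ\Ih\TT{\psi}^e$, together with the regularity of $\TT{\psi}$, yields $\Norm{\QQhTilde\Ih\TT{\psi}^e}{\Kh}\leqC h^{3}\Norm{\PP\TT{e}^\ell}{\K}$ and hence an $h^{m+3-\alpha}$ contribution; whereas the same decomposition used for $\QQhTilde\TT{u}_h$, applied now to $\Ih\TT{\psi}^e$ viewed as a discrete dual approximation and invoking \cref{thm:energy-norm-estimate} for the dual problem, gives $\Norm{\QQhTilde\Ih\TT{\psi}^e}{\Kh}\leqC h^{m+\alpha}\Norm{\PP\TT{e}^\ell}{\K}$ and thus an $h^{2m}$ contribution. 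Taking the best of the three competing contributions, dividing through by $\Norm{\PP\TT{e}^\ell}{\K}$, and finally inserting \cref{thm:better-h1-estimate} for the $\HNormTan{\PP\TT{e}^\ell}{\K}$ factor delivers the final order $\min\{\hat m+1,\tilde m\}$.
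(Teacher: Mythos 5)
Your treatment of the normal part and the overall Aubin--Nitsche architecture (testing the dual problem with $\PP\TT{e}^{\ell}$, inserting an interpolant of $\TT{\psi}$, and splitting into an interpolation piece, a $\delta$-piece\EN{}{,} and a penalty piece) coincide with the paper's proof, and the $\kg+1$ entry of $\tilde m$ is obtained the same way. The gap is in the penalty term, precisely where you flag the subtlety. First, the bound $\Norm{\QQhTilde\Ih\TT{\psi}^e}{\Kh}\leqC h^{3}\Norm{\PP\TT{e}^\ell}{\K}$ is not what super-approximation gives: \cref{lem:superApproximation} controls $\QQ\Ih\TT{\psi}^e$, not $\QQhTilde\Ih\TT{\psi}^e$, and passing from $\QQ$ to $\QQhTilde$ costs an additional $h^{\kp}\Norm{\Ih\TT{\psi}^e}{\Kh}$ by \eqref{eq:better-normal}; moreover for $\ku=1$ the super-approximation exponent degrades to $\ku+1$ because $\TT{\psi}$ only has $H^2$ regularity. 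The correct bound is $\Norm{\QQhTilde\TT{\psi}_h}{\Kh}\leqC h^{\min\{\ku+1,3,\kp\}}\Norm{\PP\TT{e}^\ell}{\K}$ (this is \eqref{eq:Qhkp-psih} in the paper), and it is exactly the $\kp$ and $\ku+1$ branches of this minimum — combined with $m\leq\kp-\alpha$ and $m\leq\ku$ — that produce the $h^{2m}$ entry of $\tilde m$. Your clean $h^3$ claim therefore fails whenever $\kp<3$ or $\ku=1$.

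Second, your fallback derivation of the $2m$ contribution does not work: $\Ih\TT{\psi}^e$ is the Lagrange interpolant of the continuous dual solution, not the Galerkin solution of a discrete dual problem, so \cref{thm:energy-norm-estimate} does not apply to it; and even if one discretized the dual problem, its solution $\TT{\psi}$ only enjoys $H^2$-regularity (\cref{lem:aubin-nietsche-regularity}), so an order-$m$ energy estimate (which needs $H^{\ku+1}_{\mathrm{tan}}$ data) is unavailable. The claimed bound $\Norm{\QQhTilde\Ih\TT{\psi}^e}{\Kh}\leqC h^{m+\alpha}\Norm{\PP\TT{e}^\ell}{\K}$ is thus unjustified; what interpolation actually yields is $\leqC(h^{1+\alpha}+h^{\kp})\Norm{\PP\TT{e}^\ell}{\K}$, which is too weak to recover $h^{2m}$ when $m>1$. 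The repair is simply to use the single estimate $\Norm{\QQhTilde\TT{\psi}_h}{\Kh}\leqC h^{\min\{\ku+1,3,\kp\}}\Norm{\PP\TT{e}^\ell}{\K}$ and observe that $h^{-2\alpha}\cdot h^{m+\alpha}\cdot h^{\min\{\ku+1,3,\kp\}}\leq h^{m+3-\alpha}+h^{2m}$; no second, complementary argument is needed.
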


\begin{proof}
	We have by $L^2$-norm equivalence
	\begin{align*}
		\Norm{\QQ(\TT{u} - \TT{u}_h^{\ell})}{\K}
		& \leqC \Norm{\QQhTilde(\TT{u}^{e} - \TT{u}_h)}{\Kh}
		+\Norm{(\QQ-\QQhTilde)(\TT{u}^{e} - \TT{u}_h)}{\Kh}\\
		& \leqC h^{\alpha} \EnergyNorm{\TT{u}^{e} - \TT{u}_h}{s_h}
		+h^{\kp}\Norm{\TT{u}^{e} - \TT{u}_h}{\Kh}\\
		& \leqC h^{\alpha} \EnergyNorm{\TT{u}^{e} - \TT{u}_h}{s_h}
		+h^{\kp}\Norm{\PPh(\TT{u}^{e} - \TT{u}_h)}{\Kh}
		+h^{\kp}\Norm{\QQh(\TT{u}^{e} - \TT{u}_h)}{\Kh}\\
		& \leqC h^{\alpha}(1+h^{\kp}) \EnergyNorm{\TT{u}^{e} - \TT{u}_h}{s_h}
		+(h^{\kp}+h^{2\kp})\Norm{\PPh(\TT{u}^{e} - \TT{u}_h)}{\Kh}.
	\end{align*}
	where the last estimate follows from~\eqref{eq:disc-h1-bounds-1}.

	In order to prove the estimate for the tangential part, we define $\TT{\psi}_h\colonequals \TT{I}_h^{m}\TT{\psi}^e \in\Vh^{\ku}$ with $m=\min\{\ku,2\}$ for the solution $\TT{\psi}\in \HSpaceTan[2]{\K}$ of the dual problem \eqref{eq:dual-problem}. Note that we have
	\begin{align}\label{eq:psi-psih-gnorm}
		\GNorm{\TT{\psi}^{e} - \TT{\psi}_h}\leqC h \HNormTan[2]{\TT{\psi}}{\K}
		\leqC h\Norm{\PP \TT{e}^{\ell}}{\K}
	\end{align}
	by interpolation error estimates \cref{lem:interpolation-error-estimates} and the $H^2$-regularity of the dual problem \cref{lem:aubin-nietsche-regularity}. Furthermore, we use the $L^2$-norm equivalence, super-approximation \cref{lem:superApproximation} for $\ku\geq 2$ and \cref{lem:interpolation-error-estimates} for $\ku=1$, with $\Norm{\QQ \TT{\psi}_h}{\Kh}=\Norm{\QQ (\TT{\psi}_h - \psi)}{\Kh}$, to obtain
	\begin{align*}
		\Norm{\QQhTilde  \TT{\psi}_h}{\Kh}
		&\leqC \Norm{\QQ \TT{\psi}_h}{\Kh} + h^{\kp}\Norm{\TT{\psi}_h}{\Kh}
		 \leqC h^{\min\{\ku+1,3\}} \HNormAmb[2]{\TT{\psi}_h^{\ell}}{\K} + h^{\kp}\Norm{\TT{\psi}_h^{\ell}}{\K}\\
		&\leqC h^{\min\{\ku+1,3,\kp\}} \HNormAmb[2]{\TT{\psi}_h^{\ell}}{\K}\,.
	\end{align*}
With interpolation error estimates \cref{lem:interpolation-error-estimates-amb}\EN{}{,} and \cref{lem:gradient-estimate} we have
	\begin{align*}
		\HNormAmb[2]{\TT{\psi}_h^{\ell}}{\K}
		&\leq \HNormAmb[2]{\TT{\psi}_h^{\ell} - \TT{\psi}}{\K} + \HNormAmb[2]{\TT{\psi}}{\K}
		 \leqC \HNormTan[2]{\TT{\psi}}{\K}\,.
	\end{align*}
Using $H^2$-regularity of the dual problem \cref{lem:aubin-nietsche-regularity}, we arrive at
	\begin{align}\label{eq:Qhkp-psih}
		\Norm{\QQhTilde  \TT{\psi}_h}{\Kh}
		&\leqC h^{\min\{\ku+1,3,\kp\}} \HNormTan[2]{\TT{\psi}}{\K}
		\leqC h^{\min\{\ku+1,3,\kp\}}\Norm{\PP \TT{e}^{\ell}}{\K}\,.
	\end{align}
Insertion of $\PP\TT{e}^{\ell}$ as a test function into the dual problem and using Galerkin orthogonality \eqref{eq:GalerkinOrthoCont} allows us to estimate
	\begin{align*}
	\Norm{\PP \TT{e}^{\ell}}{\K}^{2}
	&=a(\TT{e}^{\ell},\TT{\psi})\\
	&=a(\TT{e}^{\ell},\TT{\psi}-\TT{\psi}_{h}^{\ell}) + \delta(\TT{u}_{h}^{\ell})(\TT{\psi}_{h}^{\ell}) + s_h(\TT{u}_h,\TT{\psi}_h)\\
	&\leq \HNormTan{\PP \TT{e}^{\ell}}{\K}\HNormTan{\PP(\TT{\psi}-\TT{\psi}_{h}^{\ell})}{\K}
	+\Abs{\delta(\TT{u}_{h}^{\ell})(\TT{\psi}_{h}^{\ell})} + \Abs{s_h(\TT{u}_h,\TT{\psi}_h)}.
	\end{align*}
	Using \cref{lem:contGalerkinOrtho}, \eqref{eq:psi-psih-gnorm}\EN{}{,} and $H^2$-regularity of the dual problem we then get
	\if\isimastyle1
	\begin{align*}
		\Abs{\delta(\TT{u}_{h}^{\ell})(\TT{\psi}_{h}^{\ell})}
		& \leqC h^{\kg+1}\big(\HNormTan[2]{\TT{\psi}}{\K}\! + h^{-1}\GNorm{\TT{\psi}^{e}\!-\!\TT{\psi}_h}\big)\big(\Norm{\TT{f}}{\K} + \HNormTan[2]{\TT{u}}{\K}\! + h^{-1}\GNorm{\TT{u}^{e}\!-\!\TT{u}_h}\big)\\
		&\leqC h^{\kg+1}\Norm{\PP \TT{e}^{\ell}}{\K}\left(\Norm{\TT{f}}{\K} + \HNormTan[2]{\TT{u}}{\K} + h^{-1}\GNorm{\TT{u}^{e}\!-\!\TT{u}_h}\right).
		\end{align*}
	\else
	\begin{align*}
	&	\Abs{\delta(\TT{u}_{h}^{\ell})(\TT{\psi}_{h}^{\ell})} \\
	&\qquad \leqC h^{\kg+1}\big(\HNormTan[2]{\TT{\psi}}{\K}\! + h^{-1}\GNorm{\TT{\psi}^{e}\!-\!\TT{\psi}_h}\big)\big(\Norm{\TT{f}}{\K} + \HNormTan[2]{\TT{u}}{\K}\! + h^{-1}\GNorm{\TT{u}^{e}\!-\!\TT{u}_h}\big)\\
	&\qquad\leqC h^{\kg+1}\Norm{\PP \TT{e}^{\ell}}{\K}\left(\Norm{\TT{f}}{\K} + \HNormTan[2]{\TT{u}}{\K} + h^{-1}\GNorm{\TT{u}^{e}\!-\!\TT{u}_h}\right).
	\end{align*}
	\fi
	With \eqref{eq:gnorm-estimate} we arrive at
	\begin{align*}
		\Abs{\delta(\TT{u}_{h}^{\ell})(\TT{\psi}_{h}^{\ell})}
		&\leqC \left(h^{\kg+1}+ h^{2m}\right)\Norm{\PP \TT{e}^{\ell}}{\K}\left(\Norm{\TT{f}}{\K} + \HNormTan[\ku+1]{\TT{u}}{\K}\right)\,.
	\end{align*}
For the $\Abs{s_h}$ term we estimate, using \eqref{eq:Qhkp-psih} and \cref{thm:energy-norm-estimate}
\begin{align*}
	\Abs{s_h(\TT{u}_h,\TT{\psi}_h)}
	&\leqC h^{-2\alpha} \Norm{\QQhTilde\TT{u}_h}{\Kh} \Norm{\QQhTilde\TT{\psi}_h}{\Kh} \\
	&\leq h^{-\alpha} (h^{-\alpha}\Norm{\QQhTilde\TT{e}}{\Kh}+ h^{-\alpha}\Norm{\QQhTilde\TT{u}}{\Kh}) \Norm{\QQhTilde\TT{\psi}_h}{\Kh} \\
	&\leqC h^{-\alpha + \min\{\ku+1,3,\kp\}} \left(\EnergyNorm{\TT{e}}{A_h} + h^{\kp-\alpha}\Norm{\TT{u}}{\K}\right)\Norm{\PP\TT{e}^\ell}{\K} \\
	&\leqC \left(h^{m+3-\alpha}+h^{2m}\right) \left(\Norm{\TT{f}}{\K} + \HNormTan[\ku+1]{\TT{u}}{\K}\right)\Norm{\PP\TT{e}^\ell}{\K}\,.
	\end{align*}
	In summary, we finally obtain the assertion
\begin{align*}
	\Norm{\PP \TT{e}^{\ell}}{\K}
	&\leqC h\HNormTan{\PP \TT{e}^{\ell}}{\K}
	+\left(h^{\kg+1}+ h^{2m}+ h^{m+3-\alpha}\right)\left(\Norm{\TT{f}}{\K} + \HNormTan[\ku+1]{\TT{u}}{\K}\right)\,.
\end{align*}
\end{proof}

\section{Numerical Experiments}\label{sec:numerical-experiments}
We consider an ellipse curve and an ellipsoidal surface embedded in $\R^2$ and $\R^3$, respectively, given by the implicit descriptions
\begin{align*}
	\G_{1d} &= \Set{ x=(x_1,x_2)^T\in\R^2 \mid \phi_{1d}(x)\colonequals \left(\frac{x_1}{a}\right)^2 + \left(\frac{x_2}{b}\right)^2 - 1 = 0 }\,, \\
	\G_{2d} &= \Set{ x=(x_1,x_2,x_3)^T\in\R^3 \mid \phi_{2d}(x)\colonequals \left(\frac{x_1}{a}\right)^2 + \left(\frac{x_2}{b}\right)^2 + \left(\frac{x_3}{c}\right)^2 - 1 = 0 }\,,
\end{align*}
with $a=\frac{3}{4}, b=\frac{5}{4}$\EN{}{,} and $c=1$. This levelset description allows to compute the normal vector $\nn(x) = \DDer\phi(x)/\FNorm{\DDer\phi(x)}$ and the Weingarten map $\Weingarten(x)=-\P(x)\DDer\nn(x)\P(x)$ analytically. We also run tests on the unit sphere $\mathcal{S}^2$.

To construct a discrete surface approximation $\Gh$, we start with a reference grid $\hat{\G}_h$ obtained from an explicit ellipse, ellipsoid, or sphere grid, constructed using Gmsh \cite{GR2009Gmsh}. The higher-order polynomial surface approximation is then built by an iteratively computed surface projection $\pi(x)$ using the simple iterative scheme proposed by \citet{DD2007Adaptive} for the ellipse and ellipsoid and the direct coordinate projection $\pi(x)=x/\|x\|$ for the sphere. The grid representation is then implemented using \textsc{Dune-CurvedGrid}, see \cite{PS2020DuneCurvedGrid}.

In order to compute errors, we prescribe an exact solution of \eqref{eq:continuous_model_problem} and then compute the right-hand side function $\TT{f}$ analytically using Sympy \cite{MS2017SymPy}. We denote the tensor rank $n$ as superscript ${}^{(n)}$ in the following. The numerical experiments on the surfaces are performed for 1-tensor fields (vector fields) and 2-tensor fields, only.

We prescribe a solution vector field $\TT{u}_{1d}^{(1)}\in \TensorField[1]{\G_{1d}}$ and $\TT{u}_{2d}^{(1)}\in \TensorField[1]{\G_{2d}}$ as
\begin{align*}
	\TT{u}_{1d}^{(1)}(x) &= \P (x_1^3 x_2, (x_1+2)x_2^2)^T, && \text{for }x=(x_1,x_2)\in\G_{1d}\,, \\
	\TT{u}_{2d}^{(1)}(x) &= \operatorname{curl}_{\nn}(x_1 x_2 x_3), && \text{for }x=(x_1,x_2,x_3)\in\G_{2d}\,,
\end{align*}
with $\operatorname{curl}_{\nn} f = f\nn\times\nabla$ the normal-curl of a scalar field $f$. The right-hand side function $\TT{f}^{(1)}$ is then explicitly computed, as $\TT{f}^{(1)} = -\bm{\Delta}_\G \TT{u}^{(1)} + \TT{u}^{(1)}$. In \cref{fig:vector-ellipsoid-solution} the corresponding surface geometry and vector field solution is \EN{visualised}{visualized}.

\begin{figure}[ht]
  \centering
  \includegraphics[height=.2\textheight]{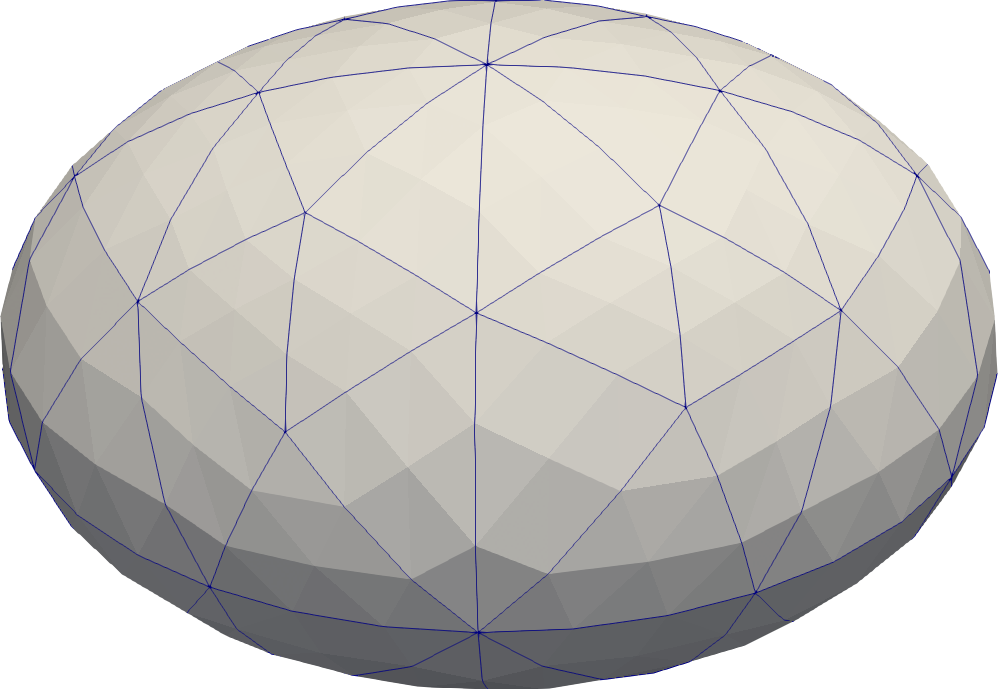}\hspace{0.5cm}
  \includegraphics[height=.2\textheight]{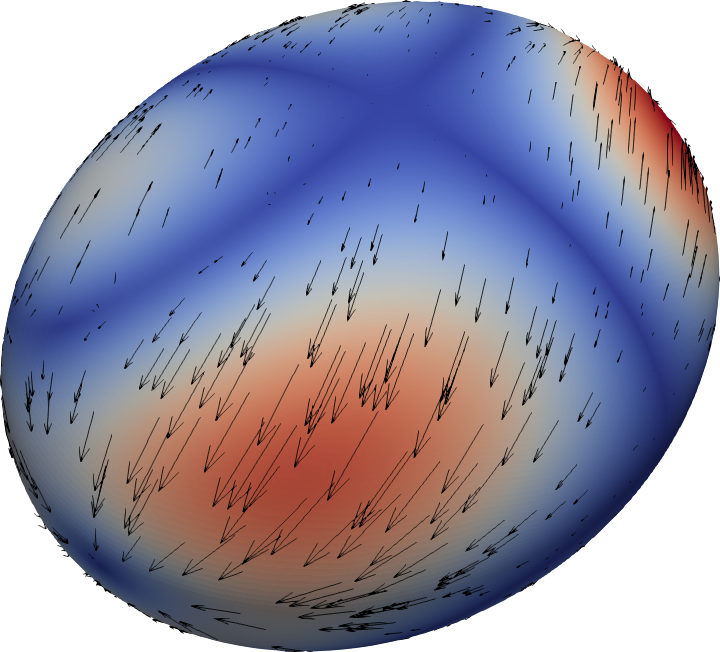}
  \caption{Left: initial grid of the ellipsoid surface $\G_{2d}$. Right: solution $\TT{u}_{2d}^{(1)}$ of the problem for $\kg=2$.}\label{fig:vector-ellipsoid-solution}
\end{figure}

A 2-tensor field solution $\TT{u}^{(2)}\in \TensorFieldTan[2]$ is prescribed as a projected constant tensor:
\begin{align*}
	\TT{u}_{1d}^{(2)}(x) &= \PP \begin{pmatrix}
		-1 & 3 \\ 1 & 2
	\end{pmatrix}, && \text{for }x=(x_1,x_2)\in\G_{1d}\,, \\
	\TT{u}_{2d}^{(2)}(x) &= \PP \begin{pmatrix}
		-1 & 3 & 0 \\ 1 & 2 & 0 \\ 0 & 0 & 1
	\end{pmatrix}, && \text{for }x=(x_1,x_2,x_3)\in\mathcal{S}^2\,.
\end{align*}
A corresponding right-hand side function $\TT{f}^{(2)}$ is computed in analogy to $\TT{f}^{(1)}$.

An experimental order of convergence is computed using a weighted least squares fitting of the estimate $\|\TT{u}-\TT{u}_h\|\simeq C h^p$ with \EN{logarithmised}{logarithmized} data, i.e.,
\[
	\operatorname{argmin}_{C,p}\left\{\sum_{l} w_l (\log(C) + p\log{h_l} - \log(\|\TT{e}_l\|))^2\right\}
\]
with $h_l$ and $\bm{e}_l$ the grid size and numerical error in refinement level $l$ and $w_l\simeq 2^l$ a weight to amplify small grid sizes. The constant $C$ is ignored in the following.

All simulations are implemented in the numerics framework \textsc{Dune}, see \cite{BB2021Dune,Sa2020Dune}. The code to produce the results and the corresponding grids and final data can be found in \cite{PH2021Code}.

\subsection{Results for vector-fields on $\G_{2d}$}
\cref{tbl:results_2d_vector} shows the experimental order of convergence for vector-field solutions of \eqref{eq:discrete_variational_problem} on $\G_{2d}$. The surface is approximated by discrete surfaces of geometry order $\kg\in\{1,2,3,4\}$ with penalty term order $\kp\in\{\kg,\kg+1\}$.

Since the error norms are composed of many terms that might be weighted by the penalty pre-factor $\beta$, we expect that with $\beta$ small or large either the one or the other term in the error estimates dominates the results for $h > h_0$, i.e., if the grid size is not yet small enough. Asymptotically, the constants in the analysis are irrelevant, but numerically we can only handle moderate grid sizes. This observation leads to the result that depending on $\beta$ we might see different convergence orders. This is partially already mentioned and observed by \citet{HL2020Analysis}. In order to compute the experimental order of convergence, we take the minimal order for computations with $\beta\in\{10,100,10^4\}$.

\begin{table}[ht]
	\begin{tabular}{l|l||ll|ll|ll|ll}
	$\ku,\kg,\kp$ & $\alpha$ & \multicolumn{2}{l|}{$\Norm{\TT{e}}{\Gh}$} & \multicolumn{2}{l|}{$\Norm{\PPh\TT{e}}{\Gh}$} & \multicolumn{2}{l|}{$\EnergyNorm{\TT{e}}{A_h}$} & \multicolumn{2}{l}{$\EnergyNorm{\TT{e}}{a_h}$} \\
	\hline
		 1, 1, 1 & 0.00 & 0.00 & (0.0) & 0.01 & (0.0) &     0.00 & (0.0)  &     0.02 & (0.0) \\
		 1, 1, 1 & 0.25 & 0.48 & (0.5) & 0.44 & (0.5) &     0.27 & (0.25) &     0.68 & (0.5) \\
		 1, 1, 1 & 0.50 & 0.99 & (1.0) & 0.93 & (1.0) &     0.53 & (0.5)  &     1.00 & (1.0) \\
		 1, 1, 1 & 0.75 & 0.52 & (0.5) & 0.49 & (0.5) &\ul{0.53}$^+$ & (0.25) &     0.51$^+$ & (0.5) \\
		 1, 1, 1 & 1.00 & 0.01 & (0.0) & 0.01 & (0.0) &     0.17 & (0.0)  &     0.15 & (0.0) \\
		 1, 1, 2 & 1.00 & 2.00 & (2.0) & 2.00 & (2.0) &     1.00 & (1.0)  &     1.00 & (1.0) \\
	\hline
		 2, 2, 2 & 0.00 &\ul{1.48} & (1.0) &\ul{2.70} & (2.0) &\ul{1.49} & (1.0)  &\ul{1.50}& (1.0) \\
		 2, 2, 2 & 0.25 &\ul{1.93} & (1.5) &\ul{2.91} & (2.5) &\ul{1.71} & (1.25) &\ul{1.82}& (1.5) \\
		 2, 2, 2 & 0.50 & 2.17$^*$ & (2.0) & 2.98 & (3.0) & 1.49$^*$ & (1.5)  &     1.96 & (2.0) \\
		 2, 2, 2 & 0.75 & 2.18 & (2.0) & 2.60$^*$ & (2.5) & 1.24$^*$ & (1.25) &     1.97 & (2.0) \\
		 2, 2, 2 & 1.00 & 2.05 & (2.0) & 2.10$^*$ & (2.0) & 0.98$^*$ & (1.0)  &     1.99 & (2.0) \\
		 2, 2, 3 & 1.00 & 3.12 & (3.0) & 2.99 & (3.0) &     2.01 & (2.0)  &     1.99 & (2.0) \\
	\hline
		 3, 3, 3 & 0.00 & 2.00 & (2.0) & 3.37 & (3.0) &     2.00 & (2.0)  &     2.11 & (2.0) \\
		 3, 3, 3 & 0.25 & 2.50 & (2.5) & 3.35 & (3.5) &     2.25 & (2.25) &     2.24 & (2.5) \\
		 3, 3, 3 & 0.50 & 2.99 & (3.0) & 3.86 & (4.0) &     2.50 & (2.5)  &     2.80 & (3.0) \\
		 3, 3, 3 & 0.75 & 3.15$^*$ & (3.0) & 3.98 & (4.0) & 2.32$^*$ & (2.25) &     2.97 & (3.0) \\
		 3, 3, 3 & 1.00 & 3.05$^+$ & (3.0) & 3.99 & (4.0) & 2.15$^*$ & (2.0)  &     2.98 & (3.0) \\
		 3, 3, 4 & 1.00 & 4.01 & (4.0) & 3.99 & (4.0) &     3.00 & (3.0)  &     2.98 & (3.0) \\
	\hline
		 4, 4, 4 & 0.00 & \ul{3.39} & (3.0) & \ul{4.46} & (4.0) & \ul{3.39} & (3.0)  & \ul{3.38} & (3.0) \\
		 4, 4, 4 & 0.25 & \ul{3.87} & (3.5) & 4.62 & (4.5) & \ul{3.62} & (3.25) &     3.56 & (3.5) \\
		 4, 4, 4 & 0.50 & 4.04$^*$ & (4.0) & 4.90 & (5.0)  & 3.48$^*$ & (3.5)  &     3.89 & (4.0) \\
		 4, 4, 4 & 0.75 & 4.05$^*$ & (4.0) & 4.92 & (5.0) & 3.23$^*$ & (3.25) &     3.92 & (4.0) \\
		 4, 4, 4 & 1.00 & 4.09$^*$ & (4.0) & 4.95 & (5.0) & 2.97$^*$ & (3.0)  &     3.94 & (4.0) \\
		 4, 4, 5 & 1.00 & 5.25 & (5.0) & 4.94 & (5.0) & 4.10 & (4.0)  &     3.93 & (4.0) \\
	\hline
\end{tabular}
	\caption{\label{tbl:results_2d_vector}Experimental order of convergence on $\G_{2d}$ for various geometry orders $\kg$ and some variation of the penalty term order $\kp\in\{\kg,\kg+1\}$ and pre-factor exponent $\alpha$. Additionally, the expected orders are shown in brackets. Numbers with a star $^*$ are computed using the pre-factor $\beta=10^4$, numbers with a sup-script plus $^+$ with $\beta=100$\EN{}{,} and the others with $\beta=10$. Underlined numbers are significantly better in our numerical experiments than the expected orders. Note, the energy-norm $\EnergyNorm{\TT{e}}{A_h}$ depends on the \EN{penalisation}{penalization} parameters $\alpha$ and $\beta$.}
\end{table}

First of all, we observe from the data that there is a difference in the energy norm $\EnergyNorm{\cdot}{A_h}$ and the discrete tangential $H^1$-norm $\EnergyNorm{\cdot}{a_h}$. Only for the case $\kp=\kg+1$ and $\alpha=1$ these convergence orders are the same.

\begin{figure}[ht]
  \centering
  \begin{subfigure}[b]{0.48\textwidth}
  \scalebox{0.79}{\begin{tikzpicture}
  \def\vara{5}
  \def\varb{120}
  \begin{loglogaxis}[ xlabel={Grid size $h$}, domain={0.015625:0.29},
                      width=1.3\linewidth, height=\linewidth,
                      legend style={at={(0.99,0.015)}, anchor=south east,
                                    legend columns=1, draw=none, fill=none},
                      legend cell align=left, cycle list name=mark list ]
    \addplot[mark=square]  table[x=h, y=PL2] {vec_kg2_ku2_kp2_alpha1.dat};
    \addplot[mark=square*] table[x=h, y=L2]  {vec_kg2_ku2_kp2_alpha1.dat};
\addplot[mark=o]       table[x=h, y=PL2] {vec_kg4_ku4_kp4_alpha1.dat};
    \addplot[mark=*]       table[x=h, y=L2]  {vec_kg4_ku4_kp4_alpha1.dat};
    \legend{
      \small (tan)  $\kg=2$,
      \small (full) $\kg=2$,
\small (tan)  $\kg=4$,
      \small (full) $\kg=4$
    }
    \logLogSlopeTriangleT{0.2}{0.09}{0.61}{2}{black};
    \logLogSlopeTriangleT{0.2}{0.09}{0.25}{4}{black};
    \logLogSlopeTriangleB{0.22}{0.09}{0.08}{5}{black};
  \end{loglogaxis}
  \node[anchor=south west] at (0.0,4.7) {(a) $L^2$-errors for $\alpha=1$};
  \end{tikzpicture}}
\end{subfigure}
\begin{subfigure}[b]{0.48\textwidth}
  \scalebox{0.79}{\begin{tikzpicture}
  \def\vara{5}
  \def\varb{120}
  \begin{loglogaxis}[ xlabel={Grid size $h$}, domain={0.015625:0.29},
                      width=1.3\linewidth, height=\linewidth,
                      legend style={at={(0.99,0.015)}, anchor=south east,
                                    legend columns=1, draw=none, fill=none},
                      legend cell align=left, cycle list name=mark list ]
\addplot[mark=x]         table[x=h, y=PL2] {vec_kg2_ku2_kp2_alpha05.dat};
    \addplot[mark=triangle]         table[x=h, y=PL2] {vec_kg2_ku2_kp2_alpha075.dat};
    \addplot[mark=o]         table[x=h, y=PL2] {vec_kg2_ku2_kp2_alpha1.dat};
    \legend{
\small $\alpha=0.5$,
      \small $\alpha=0.75$,
      \small $\alpha=1.0$
    }

    \logLogSlopeTriangleT{0.2}{0.09}{0.31}{2}{black};
    \logLogSlopeTriangleT{0.2}{0.09}{0.18}{2.5}{black};
    \logLogSlopeTriangleB{0.22}{0.09}{0.1}{3}{black};
  \end{loglogaxis}
  \node[anchor=south west] at (0.0,4.7) {(b) Tangential $L^2$-errors for $\kg=2$};
  \end{tikzpicture}}
\end{subfigure}

   \caption{Errors in the norms $\Norm{\TT{u}^e - \TT{u}_h}{\Kh}$ (full) and $\Norm{\PP(\TT{u}^e - \TT{u}_h)}{\Kh}$ (tan) for vector fields on $\G_{2d}$ in the isogeometric case $\kg=\ku=\kp$. Left: fixed \EN{penalisation}{penalization} exponent $\alpha=1$ and variation of the geometry order $\kg\in\{2,4\}$. Right: variation of $\alpha\in\{0.5,0.75,1\}$ for fixed geometry order $\kg=2$. The calculations are performed with $\beta=10^4$.}\label{fig:vector-ellipsoid}
\end{figure}

Second, we observe that the tangential $L^2$-norm errors show higher order convergence than the full $L^2$-norm errors in some cases. For the isogeometric case, i.e., $\kg=\kp$, the tangential errors are up to one order better than the full errors for $\kg\geq 2$. This is \EN{visualised}{visualized} in \cref{fig:vector-ellipsoid} (a).

\begin{figure}[ht]
  \centering
  \begin{subfigure}[b]{0.48\textwidth}
  \scalebox{0.79}{\begin{tikzpicture}
  \def\vara{5}
  \def\varb{120}
  \begin{loglogaxis}[ xlabel={Grid size $h$}, domain={0.015625:0.29},
                      width=1.3\linewidth, height=\linewidth,
                      legend style={at={(0.99,0.015)}, anchor=south east,
                                    legend columns=1, draw=none, fill=none},
                      legend cell align=left, cycle list name=mark list ]
    \addplot[mark=triangle] table[x=h, y=PL2] {vec_kg1_ku1_kp1_alpha05.dat};
    \addplot[mark=square]   table[x=h, y=PL2] {vec_kg2_ku2_kp2_alpha05.dat};
    \addplot[mark=x]        table[x=h, y=PL2] {vec_kg3_ku3_kp3_alpha05.dat};
    \addplot[mark=o]        table[x=h, y=PL2] {vec_kg4_ku4_kp4_alpha05.dat};
    \legend{
      \small $\kg=1$,
      \small $\kg=2$,
      \small $\kg=3$,
      \small $\kg=4$
    }
    \logLogSlopeTriangleB{0.22}{0.09}{0.75}{1}{black};
    \logLogSlopeTriangleB{0.22}{0.09}{0.42}{3}{black};
    \logLogSlopeTriangleB{0.22}{0.09}{0.25}{4}{black};
    \logLogSlopeTriangleB{0.22}{0.09}{0.09}{5}{black};

  \end{loglogaxis}
  \node[anchor=south west] at (0.0,4.7) {(a) Tangential $L^2$-errors for $\alpha=0.5$};
  \end{tikzpicture}}
\end{subfigure}
\begin{subfigure}[b]{0.48\textwidth}
  \scalebox{0.79}{\begin{tikzpicture}
  \def\vara{5}
  \def\varb{120}
  \begin{loglogaxis}[ xlabel={Grid size $h$}, domain={0.015625:0.29},
                      width=1.3\linewidth, height=\linewidth,
                      legend style={at={(0.99,0.015)}, anchor=south east,
                                    legend columns=1, draw=none, fill=none},
                      legend cell align=left, cycle list name=mark list ]
    \addplot[mark=triangle]  table[x=h, y=PL2] {vec_kg1_ku1_kp1_alpha05.dat};
    \addplot[mark=triangle*] table[x=h, y=PL2] {vec_kg1_ku1_kp2_alpha05.dat};
    \addplot[mark=square]    table[x=h, y=PL2] {vec_kg2_ku2_kp2_alpha05.dat};
    \addplot[mark=square*]   table[x=h, y=PL2] {vec_kg2_ku2_kp3_alpha05.dat};
    \legend{
      {\small $\kg=1, \kp=1, \alpha=0.5$},
      {\small $\kg=1, \kp=2, \alpha=1.0$},
      {\small $\kg=2, \kp=2, \alpha=0.5$},
      {\small $\kg=2, \kp=3, \alpha=1.0$}
    }
    \logLogSlopeTriangleB{0.22}{0.09}{0.64}{1}{black};
    \logLogSlopeTriangleB{0.22}{0.09}{0.42}{2}{black};
    \logLogSlopeTriangleB{0.22}{0.09}{0.09}{3}{black};
  \end{loglogaxis}
  \node[anchor=south west] at (0.0,4.7) {(b) Tangential $L^2$-errors};
  \end{tikzpicture}}
\end{subfigure}
   \caption{Errors in the tangential $L^2$-norm for vector fields on $\G_{2d}$. Left: variation of $\kg=\kp$ for fixed $\alpha=0.5$. Right: comparison of $\kp\in\{\kg,\kg+1\}$ for $\alpha\in\{0.5,1.0\}$, i.e., the optimal convergence order in the isogeometric and non-isogeometric case.}\label{fig:vector-ellipsoid2}
\end{figure}
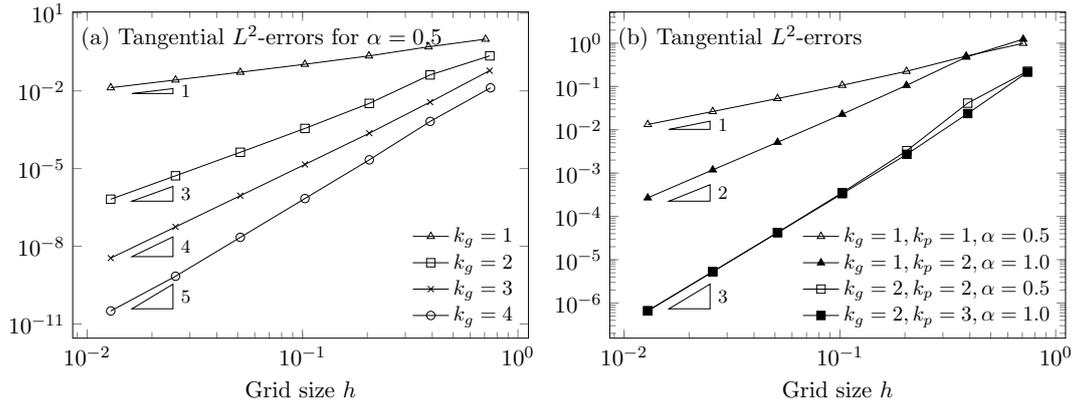

A third observation is that the case $\kg=1$ is special. In the isogeometric setup we need $0 < \alpha < 1$ in order to see convergence at all. While we get the same convergence order in the tangential $H^1$-norm for the isogeometric case $\kp=\kg=1,\alpha=0.5$ compared to the non-isogeometric case $\kp=\kg+1,\alpha=1$, this does not cross over to the $L^2$-norms. The optimal order is achieved for the non-isogeometric case only, see also \cref{fig:vector-ellipsoid2} (b). This indicates that the \EN{penalisation}{penalization} using normal vectors of the piecewise flat geometries does not represent the continuous surface well enough. An improved normal field might be desirable in this case, see \cref{rem:better-normal}.

The optimal tangential $H^1$-norm and tangential $L^2$-norm errors for the isogeometric case are obtained with $\alpha=0.5$, especially for low order geometries, $1\leq\kg\leq 2$. For higher order discrete surfaces the parameter $\alpha$ does not show a significant influence on these two error norms, while we still see a difference in the full discrete energy norm.
The influence of $\alpha$ on the convergence \EN{behaviour}{behavior} and the optimality of $\alpha=0.5$ is \EN{visualised}{visualized} in \cref{fig:vector-ellipsoid} (b) for the tangential $L^2$-norm errors.

A fourth observation in the data shown in \cref{tbl:results_2d_vector} is that the numerical convergence rates for even $\kg$ and small values of $\alpha$ are often better than the expected convergence orders. This \EN{behaviour}{behavior} is not explained in the analysis section of this paper. We have observed experimentally that for even $\kg$ the non-standard geometry estimates \eqref{eq:very-evil-trick} behave even better and seem to be of order $\kg+2$. This observation needs further investigation that goes beyond the scope of this paper.

Note, in \cref{tbl:results_2d_vector} we have computed discrete error norms only, due to numerical complexity of continuous norms. In those cases, where we have also computed continuous norms, we did not observe a difference in the convergence \EN{behaviour}{behavior}.

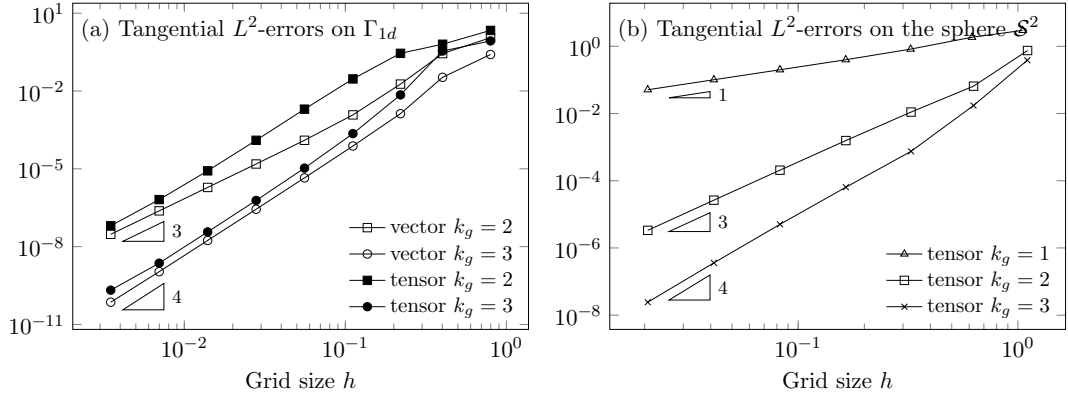
\begin{figure}[ht]
  \centering
  \begin{subfigure}[b]{0.48\textwidth}
  \scalebox{0.79}{\begin{tikzpicture}
  \def\vara{5}
  \def\varb{120}
  \begin{loglogaxis}[ xlabel={Grid size $h$}, domain={0.015625:0.29},
                      width=1.3\linewidth, height=\linewidth,
                      legend style={at={(0.99,0.015)}, anchor=south east,
                                    legend columns=1, draw=none, fill=none},
                      legend cell align=left, cycle list name=mark list ]
    \addplot[mark=square]  table[x=h, y=PL2] {vec1d_kg2_ku2_kp2_alpha05.dat};
    \addplot[mark=o]       table[x=h, y=PL2] {vec1d_kg3_ku3_kp3_alpha05.dat};
    \addplot[mark=square*] table[x=h, y=PL2] {ten1d_kg2_ku2_kp2_alpha05.dat};
    \addplot[mark=*]       table[x=h, y=PL2] {ten1d_kg3_ku3_kp3_alpha05.dat};
    \legend{
      \small vector $\kg=2$,
      \small vector $\kg=3$,
      \small tensor $\kg=2$,
      \small tensor $\kg=3$
    }
    \logLogSlopeTriangleB{0.2}{0.09}{0.27}{3}{black};
    \logLogSlopeTriangleB{0.2}{0.09}{0.06}{4}{black};
  \end{loglogaxis}
  \node[anchor=south west] at (0.0,4.7) {(a) Tangential $L^2$-errors on $\G_{1d}$};
  \end{tikzpicture}}
\end{subfigure}
\begin{subfigure}[b]{0.48\textwidth}
  \scalebox{0.79}{\begin{tikzpicture}
  \def\vara{5}
  \def\varb{120}
  \begin{loglogaxis}[ xlabel={Grid size $h$}, domain={0.015625:0.29},
                      width=1.3\linewidth, height=\linewidth,
                      legend style={at={(0.99,0.015)}, anchor=south east,
                                    legend columns=1, draw=none, fill=none},
                      legend cell align=left, cycle list name=mark list ]
\addplot[mark=triangle] table[x=h, y=PL2] {ten2d_kg1_ku1_kp1_alpha05.dat};
    \addplot[mark=square]   table[x=h, y=PL2] {ten2d_kg2_ku2_kp2_alpha05.dat};
    \addplot[mark=x]        table[x=h, y=PL2] {ten2d_kg3_ku3_kp3_alpha05.dat};
    \legend{
\small tensor $\kg=1$,
      \small tensor $\kg=2$,
      \small tensor $\kg=3$
    }
    \logLogSlopeTriangleB{0.22}{0.09}{0.71}{1}{black};
    \logLogSlopeTriangleB{0.22}{0.09}{0.30}{3}{black};
    \logLogSlopeTriangleB{0.22}{0.09}{0.09}{4}{black};
  \end{loglogaxis}
  \node[anchor=south west] at (0.0,4.7) {(b) Tangential $L^2$-errors on the sphere $\mathcal{S}^2$};
  \end{tikzpicture}}
\end{subfigure}
   \caption{Tangential $L^2$-norm errors for vector and tensor fields on the ellipse surface $\G_{1d}$ (left) and on the sphere (right), for fixed $\alpha=0.5$ and varying $\kg=\kp=\ku$.}\label{fig:ellipse}
\end{figure}

\subsection{Convergence results for tensor fields}
For tensor field solutions, we have considered the 1-dimensional ellipse surface and the 2-dimensional sphere as geometries. In \cref{fig:ellipse} (a) the errors are shown in comparison to the corresponding errors for vector fields on the ellipse and in \cref{fig:ellipse} (b) a comparison of the influence of geometry orders $\kp$ of the discrete sphere grid on the tangential errors of tensor field solutions is \EN{visualised}{visualized}.

We find that in both settings the convergence orders show the same \EN{behaviour}{behavior} as for vector fields, while the absolute error values are slightly larger in the tensor case.

\section*{\EN{Acknowledgements}{Acknowledgments}}
We thank Thomas Jankuhn for fruitful discussions.
This work was partially supported by the German Research Foundation (DFG), Research Unit Vector- and Tensor-Valued Surface PDEs (FOR 3013). We further acknowledge computing resources provided by ZIH at TU Dresden.

\bibliographystyle{plainnat}
\bibliography{references}

\end{document}